\definecolor{darkblue}{rgb}{.2, 0.2,.8}
\definecolor{carageen}{rgb}{0,0.5,0.3}
\definecolor{darkred}{rgb}{.8, .1,.1}
\newtheorem{lemma}{Lemma}[section]
\newtheorem{theorem}[lemma]{Theorem}
\newtheorem{proposition}[lemma]{Proposition}
\newtheorem{definition}[lemma]{Definition}
\newtheorem{corollary}[lemma]{Corollary}
\newtheorem{example}[lemma]{Example}
\newtheorem{exercise}[lemma]{Exercise}
\newtheorem{remark}[lemma]{Remark}
\newtheorem{fig}[lemma]{Figure}
\newtheorem{tab}[lemma]{Table}
\newcommand{\bth}{\begin{theorem}}
\newcommand{\ethe}{\end{theorem}}
\newcommand{\bre}{\begin{remark}\em }
\newcommand{\ere}{\end{remark}}
\newcommand{\ble}{\begin{lemma}}
\newcommand{\ele}{\end{lemma}}
\newcommand{\bde}{\begin{definition}}
\newcommand{\ede}{\end{definition}}
\newcommand{\bco}{\begin{corollary}}
\newcommand{\eco}{\end{corollary}}
\newcommand{\bpr}{\begin{proposition}}
\newcommand{\epr}{\end{proposition}}
\newcommand{\bexer}{\begin{exercise}}
\newcommand{\eexer}{\end{exercise}}
\newcommand{\bexam}{\begin{example}}
\newcommand{\eexam}{\end{example}}
\newcommand{\bfi}{\begin{fig}}
\newcommand{\efi}{\end{fig}}
\newcommand{\btab}{\begin{tab}}
\newcommand{\etab}{\end{tab}}
\newcommand{\beao}{\begin{eqnarray*}}
\newcommand{\eeao}{\end{eqnarray*}\noindent}
\newcommand{\beam}{\begin{eqnarray}}
\newcommand{\eeam}{\end{eqnarray}\noindent}
\newcommand{\beqq}{\begin{equation}}
\newcommand{\eeqq}{\end{equation}\noindent}
\newcommand{\bce}{\begin{center}}
\newcommand{\ece}{\end{center}}
\newcommand{\barr}{\begin{array}}
\newcommand{\earr}{\end{array}}
\newcommand{\vague}{\stackrel{\lower0.2ex\hbox{$\scriptscriptstyle
                    \it{v} $}}{\rightarrow}}
\newcommand{\weak}{\stackrel{\lower0.2ex\hbox{$\scriptscriptstyle
                    \it{w} $}}{\rightarrow}}
\newcommand{\what}{\stackrel{\lower0.2ex\hbox{$\scriptscriptstyle
                    \it{\hat{w}} $}}{\rightarrow}}
\newcommand{\bdis}{\begin{displaymath}}
\newcommand{\edis}{\end{displaymath}\noindent}
\newcommand{\R}{\mathbb{R}}
\newcommand{\ov}{\overline}
\newcommand{\wh}{\widehat}
\newcommand{\vep}{\varepsilon}
\newcommand{\bbr}{{\mathbb R}}
\newcommand{\cals}{{\mathcal S}}
\newcommand{\call}{{\mathcal L}}
\newcommand{\cala}{{\mathcal A}}
\newcommand{\cald}{{\mathcal D}}
\newcommand{\idrp}{{\rm ID}(\R_+)}
\newcommand{\id}{{\rm ID}(\R)}
\newcommand{\E }{{\mathbb E}}
\newcommand{\Z }{{\mathbb Z}}
\renewcommand{\P }{{\mathbb P}}
\begin{document}
\today
\bibliographystyle{plain}
\title[Local subexponentiality and infinitely divisible distributions]{Local subexponentiality and infinitely divisible distributions}
\thanks{Muneya Matsui's research is partly supported by the JSPS Grant-in-Aid for Scientific Research C
(19K11868). 
}
\author[M. Matsui]{Muneya Matsui}
\address{Department of Business Administration, Nanzan University, 18
Yamazato-cho, Showa-ku, Nagoya 466-8673, Japan.}
\email{mmuneya@gmail.com}
\author[T. Watanabe]{Toshiro Watanabe}
\address{Center for Mathematical Sciences, The University of Aizu, 
Ikkimachi Tsuruga, Aizu-Wakamatsu, Fukushima, 965-8580, Japan.
}
\email{markov2000t@yahoo.co.jp}

\begin{abstract}
We completely characterize $\Delta$- and local subexponentialities of positive-half 
compound Poisson distributions and extend the characterization on two-sided distributions. Moreover, $\Delta$-subexponentiality of 
infinitely divisible distributions is characterized with new conditions, and local subexponentiality is newly characterized 
in the two-sided case. In the process closedness properties of these subexponentialities are derived, particularly for 
distributions on $\R$. Most results are obtained by 
exploiting monotonic-type assumptions. We apply our results to 
distributions of supremum of a random work and a randomly stopped iid sum.
\end{abstract}
\keywords{infinitely divisible, long-tailedness, $\Delta$-subexponentiality, local subexponentiality, asymptotic to a non-increasing function, almost decreasing}
\subjclass[2010]{60E07, 60G70, 62F12}
\maketitle

\section{Introduction}
Studies on the subexponentiality 
of infinitely divisible distributions (IDs for abbreviation and ID in the singular form) have been initiated by Embrecht et al. 
\cite{Embrechts:Goldie:Veraverbeke:1979}, where the subexponentiality of one-sided distributions was 
completely characterized. 
Pakes \cite{Pakes:2004} extended the result into distributions on the real line.
Here the ``characterization of subexponentiality'' means 
the equivalence of three assertions: 
 $(\mathrm{i})$ the truncated L\'evy measure is subexponential, $(\mathrm{ii})$ the corresponding
ID is subexponential, and $(\mathrm{iii})$ L\'evy measure is long-tailed
and its tail is asymptotically equivalent to that of the corresponding ID.
Since then, under various assumptions and 
with several different versions, 
subexponential properties of IDs
have been investigated for forty years. However, not a few issues remain unsolved   
(see the detailed explanation below). 

Recently, two monotonic-type conditions: asymptotic to a non-increasing function (a.n.i. for short) and almost decreasing 
(al.d. for short) are
 found to be useful for characterizing subexponential densities of IDs, particularly in the two-sided case, 
see Matsui \cite{matsui:2022}. The latter assumption is known to be crucial for analyzing two-sided 
subexponential densities, see Foss et al. \cite[Section 4.3]{Foss:Korshunov:Zachary:2013} and 
Finkelshtein and Tkachov 
\cite{Finkelshtein:Tkachov:2018}.

In this paper, we will exploit the two conditions in different versions of subexponentiality and 
extend the boundaries of the subexponential characterization on IDs significantly. Firstly 
we adopt four different definitions of subexponentiality and characterize their relations (see Section \ref{sec:four:relations}). 
In particular  we show that these definitions are equivalent under the a.n.i. condition.

Secondly, we focus on $\Delta$- and local subexponentialities and derive properties such as 
the closedness in convolution, asymptotic equivalence, factorization and convolution root. 
The closedness property has been one of central problems in the literature 
(see Section \ref{section:prop:sdelta:sloc} together with references their in).
$\Delta$-subexponentiality has been introduced by Asmussen et al. \cite{Asmussen:Foss:Korshunov:2003} and is 
now applied in a rather wide area, while local subexponentiality by Watanabe and Yamamuro 
\cite{Watanabe:Yamamuro:2010} is of theoretical importance, i.e. it has a close inseparable relation with $\gamma$-
subexponentiality with $\gamma>0$.

Based on the obtained properties in Section \ref{section:prop:sdelta:sloc}, 
we characterize $\Delta$- and local 
subexponentialities of IDs on the whole real line. 
$\Delta$-subexponentiality was only partially characterized, and local subexponentiality 
was completed only on positive-half IDs and was 
not investigated at all in the two-sided case (see the table and explanation below).
We completely characterize $\Delta$- and local subexponentialities on positive-half compound Poisson distributions, 
where we found that no additional assumptions are needed. Moreover, with new and more simple assumptions 
we characterize these subexponentialities on IDs on $\R$. 
Specifically, we extend the boundaries of 
$\Delta$-subexponential characterization from that of the self-decomposable distribution to that of 
$s$-self-decomposable distribution (Jurek class \cite{Jurek:1985}). These main results are 
stated in Section \ref{sec:main}. 

As applications, we characterize $\Delta$-subexponentiality of  
the distribution of supremum of a random walk and that of randomly stopped iid sum in Section \ref{sec:application},
 which are closely related with classical ruin theory and queuing theory (\cite[Section 5]{Foss:Korshunov:Zachary:2013}). 


In the remainder of this section we introduce the four definitions of subexponentiality and 
review past researches, associating with characterization of these subexponentialities on subclasses 
of IDs. Then, we explain more precisely what we obtained in our paper. 
This gives a good perspective of the results in this paper, and 
one could see remaining open problems also. 
Afterwards, we provide other definitions and notions necessary in the main part.

Let $F,G,H$ be probability distribution functions on $\R$ and denote by $F\ast G$ the convolution of $F$ and $G$:
\[
 F\ast G(x)=\int_{-\infty}^\infty F(x-y)G(d y)
\] 
and denote by $F^{\ast n}$ the $n$th convolution with itself. The tail probability of $F$ is denoted by $\ov F(x)=1-F(x)$. 
Let $f,g,h$ be the corresponding probability density functions on $\R$ and we use the same notations for the convolution as those for distributions, e.g. 
\[
 f\ast g(x)=\int_{-\infty}^\infty f(x-y)g(y) d y \quad \text{or}\quad f^{\ast n}(x) \quad \text{for the $n$th convolution}. 
\]
Throughout the paper, let $\Delta:=(0,c]$ with $c>0$, and for any $x$ and for any non-negative integer $n$. 
For convenience we sometimes write $F(x+\Delta):=F(x,x+c]=F(x+c)-F(x)$. Moreover, 
for functions $\alpha,\beta:\R \to \R_+$, $\alpha(x) \sim \beta(x)$ 
means that $\lim_{x\to\infty}\alpha(x)/\beta(x)\to 1$.
\begin{definition} 
$(\mathrm{i})$ A non-negative measurable function $\alpha (x)$ belongs to the class $\call$ if there exists $x_0>0$ such that 
$\alpha (x)>0,\,x\ge x_0$ and for any fixed $y>0$ $\alpha (x+y)\sim \alpha (x)$. \\
$(\mathrm{ii})$ The density $f$ of $F$ is subexponential on $\R$, denoted by $\cals$, 
if $f\in \call$ and $f^{\ast 2}(x) \sim 2f(x)$. \\
$(\mathrm{iii})$ $F$ belongs to the class $\call_{\Delta}$ if $F(x+\Delta) \in \call$. \\
$(\mathrm{iv})$ $F$ belongs to the class $\cals_{\Delta}$ if $F\in \call_{\Delta}$ 
and $F^{\ast 2}(x+\Delta) \sim 2 F(x+\Delta)$. \\
$(\mathrm{v})$ $F$ belongs to the class $\call_{loc}$ if $F\in \call_{\Delta}$ for all $\Delta:=(0,c]$ with $c>0$. \\
$(\mathrm{vi})$ $F$ belongs to the class $\cals_{loc}$ if $F\in \cals_{\Delta}$ for all $\Delta:=(0,c]$ with $c>0$.\\
$(\mathrm{vii})$ $F(dx)=f(x)dx$ belongs to the class $\call_{ac}$ if $f(x)\in \call$.\\
$(\mathrm{viii})$ $F(dx)=f(x)dx$ belongs to the class $\cals_{ac}$ if $f(x)\in \cals$.
\end{definition}

\begin{table}[htb]
\label{table1}
\centering
  \caption{Past researches on subexponential characterization}
{\renewcommand\arraystretch{1.5}
\begin{tabular}{|l||c|c|c|c|c|}  \hline
          & one-sided CP & two-sided CP & one-sided ID & two-sided ID & SD \\ \hline \hline
 $\cals$ & \cellcolor[gray]{0.8}\cite{Embrechts:Goldie:Veraverbeke:1979} 
& \cellcolor[gray]{0.8} \cite{Pakes:2004} &  \cellcolor[gray]{0.8}\cite{Embrechts:Goldie:Veraverbeke:1979}  &
\cellcolor[gray]{0.8}  \cite{Pakes:2004} & $--$ \\ \hline 
  $\cals_\Delta$ &  \cite{Asmussen:Foss:Korshunov:2003,wang:cheng:wang:2005,Watanabe:Yamamuro:2009} & \cite{Watanabe:Yamamuro:2009} &  \cite{Asmussen:Foss:Korshunov:2003,wang:cheng:wang:2005}  &  \cite{Watanabe:Yamamuro:2009}  & \cellcolor[gray]{0.8}\cite{Watanabe:Yamamuro:2010} \\ \hline 
  $\cals_{loc}$ &  \cite{Watanabe:Yamamuro:2010} &  & \cellcolor[gray]{0.8} \cite{Watanabe:Yamamuro:2010}    &   &  \cellcolor[gray]{0.8}\cite{Watanabe:Yamamuro:2010}  \\\hline 
  $\cals_{ac}$ &  \cite{shimura:watanabe:2022} & \cite{matsui:2022} &  \cite{Watanabe:2020}   &   \cite{matsui:2022} & \cellcolor[gray]{0.8}\cite{Watanabe:Yamamuro:2010}  \\\hline 
 \end{tabular}}
\end{table}


In the literature, the subexponentiality for the compound Poisson distribution (CP for short) and the ID 
are characterized with three equivalent relations $(\mathrm{i})$-$(\mathrm{iii})$ above. We will 
review past researches in terms of this equivalence, taking $\cals,\,\cals_\Delta,\,\cals_{loc},\,\cals_{ac}$ in order.
In Table \ref{table1}, we summarize these past researches, where hatching cells implies that 
the subexponentiality was completely characterized in these cases and SD is the abbreviation for the 
self-decomposable distribution. 

$\cals$: the characterization for the distribution has been completed by 
\cite{Embrechts:Goldie:Veraverbeke:1979} and \cite{Pakes:2004}. 

$\cals_{\Delta}$: Asmussen et al. \cite{Asmussen:Foss:Korshunov:2003} 
characterized two equivalences with some conditions 
for both the CP and the ID in one-sided case. This was extended 
to the three equivalences  
by Wang et al. \cite{wang:cheng:wang:2005} with different conditions.  
In the general two-sided case Watanabe and Yamamuro 
\cite{Watanabe:Yamamuro:2009} showed three equivalences  
with less conditions than before. 

$\cals_{loc}$: completely characterized in the one-sided ID case 
(Watanabe and Yamamuro \cite{Watanabe:Yamamuro:2010}).

$\cals_{ac}$: in the positive-half case, 
Shimura and Watanabe \cite{shimura:watanabe:2022} treated the CP, while Watanabe \cite{Watanabe:2020} studied the subexponential density on the ID. 
The difference between the CP and the ID is that the treatment of problems arising from the behavior of 
L\'evy measures around the origin which explodes in the ID case. 
Recently the two-sided cases in both the CP and the ID were investigated by \cite{matsui:2022}, where a kind of 
monotonic condition is shown to be necessary. 

In addition for the class of self-decomposable distributions it is known that the three equivalent relations hold 
without any conditions in all definitions of subexponentiality (see Theorem 1.3 and Lemma 4.2 in \cite{Watanabe:Yamamuro:2010}).

After the review of past researches, now we revisit our achievement in detail. 
Recall that our main focus in on $\cals_\Delta$ and $\cals_{loc}$. We 
complete the characterization of both $\cals_\Delta$ and $\cals_{loc}$ in the positive-half CP case without additional assumptions.  
In the two-sided case, we characterize three equivalences for $\cals_\Delta$ 
with less conditions than before, particularly we remove the long tailed assumption on 
L\'evy measure, which was assumed in the preceding results \cite{Asmussen:Foss:Korshunov:2003,Watanabe:Yamamuro:2009}. 
As a by-product, we recovered the classical result of Embrechts and Hawkes \cite{Embrechts.Hawkes:1982} 
in the one-sided case and extended it to that for the two-sided case. 
We also study $\cals_{loc}$ in the two-side case, which has not been investigated at all.

For IDs we characterize $\cals_\Delta$ with different conditions, i.e. 
we replace the moment condition of \cite{Watanabe:Yamamuro:2009} with the al.d. condition on increments of 
both the distribution and the L\'evy measure, with which we only need to check the positive-half sides.  
Moreover, based on the two monotonic-type conditions, we derive new characterization of 
 $\cals_\Delta$ without using the long-tailed assumption on the L\'evy measure. 
With this result we newly characterize $\Delta$-subexponentiality of the $s$-self-decomposable distribution. 
Again our study of $\cals_{loc}$ in the two-sided case has not been investigated before.  
In summary, we achieve a major progress in the characterization problems of subexponentialities 
on IDs.

We close this section with definitions need in the latter sections. 
We are starting with the monotonic-type conditions tailored for each definition of subexponentiality.  
\begin{definition}
We say that a function $\alpha:\R\to \R_+$ is asymptotic to a non-increasing function $($a.n.i.$)$ denoted by 
$\alpha \in \cala$
if  
$\alpha$ is locally bounded and positive on $[x_0,\infty)$ for some $x_0>0$, and 
\begin{align}
\label{eq:def:ani}
 \sup_{t\ge x} \alpha(t)\sim \alpha(x)\quad \text{and}\quad \inf_{x_0\le t \le x} \alpha(t)\sim \alpha(x). 
\end{align}
$(\mathrm{i})$ $F$ belongs to the class $\cala_{\Delta}$ if $F(x+\Delta) \in \cala$. \\
$(\mathrm{ii})$ $F$ belongs to the class $\cala_{loc}$ if $F(x+\Delta) \in \cala$ for all $\Delta$. \\
$(\mathrm{iii})$ $F$ belongs to the class $\cala_{ac}$ if $F$ is absolutely continuous and its density $f\in \cala$. 
\end{definition}
\begin{definition}
 We say that a function $\alpha:\R\to \R_+$ is almost decreasing $($al.d.$)$ denoted by 
$\alpha \in \cald$ 
if there exists $x_0>0$ and $K>0$ such that 
\begin{align}
\label{eq:def:ald}
 \alpha(x+y) \le K\alpha(x)\quad \text{for all}\ x>x_0,\,y>0.
\end{align} 
$(\mathrm{i})$ $F$ belongs to the class $\cald_{\Delta}$ if $F(x+\Delta) \in \cald$. \\
$(\mathrm{ii})$ $F$ belongs to the class $\cald_{loc}$ if $F(x+\Delta) \in \cald$ for all $\Delta$. \\
$(\mathrm{iii})$ $F$ belongs to the class $\cald_{ac}$ if $F$ is absolutely continuous and its density $f\in \cald$. 
\end{definition}
Obviously $\cala_{ac} \subset \cala_{loc}\subset \cala_\Delta$ and $\cald_{ac} \subset \cald_{loc} \subset \cald_\Delta$ and 
we have following lemma, whose proof is elementary and omitted.  
\begin{lemma}
\label{lem:ald:symp:equiv}
 If $\alpha$ is al.d. and $\alpha(x)\sim \beta(x)$, then $\beta$ is al.d.
\end{lemma}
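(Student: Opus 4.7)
The plan is simply to unwind the two definitions and compose the estimates. By hypothesis, there exist $x_0>0$ and $K>0$ with $\alpha(x+y)\le K\alpha(x)$ for all $x>x_0$, $y>0$, and $\beta(x)/\alpha(x)\to 1$ as $x\to\infty$. I would fix a convenient tolerance, say $\epsilon=1/2$, and choose $x_1\ge x_0$ large enough that
\[
\tfrac{1}{2}\alpha(x)\le \beta(x)\le \tfrac{3}{2}\alpha(x)\qquad\text{for all } x\ge x_1.
\]

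Then for every $x>x_1$ and $y>0$, applying the upper bound of the asymptotic equivalence at $x+y$, the al.d. bound for $\alpha$, and the lower bound of the asymptotic equivalence at $x$ in succession yields
\[
\beta(x+y)\le \tfrac{3}{2}\alpha(x+y)\le \tfrac{3K}{2}\alpha(x)\le 3K\,\beta(x),
\]
so $\beta\in\cald$ with threshold $x_1$ and constant $K'=3K$.

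There is really no obstacle here; the only thing to note is that both directions of $\alpha\sim\beta$ are used, the upper one to move from $\beta$ to $\alpha$ at the shifted point $x+y$, and the lower one to move back from $\alpha$ to $\beta$ at $x$. Since $x_1$ can be taken arbitrarily large, the resulting inequality is of the form required by \eqref{eq:def:ald} and the lemma follows.
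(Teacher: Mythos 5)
Your argument is correct, and since the paper explicitly omits the proof of this lemma as elementary, there is nothing to compare against; your chain of inequalities $\beta(x+y)\le\tfrac{3}{2}\alpha(x+y)\le\tfrac{3K}{2}\alpha(x)\le 3K\beta(x)$ on $x>x_1$ is exactly the intended routine verification.
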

The precise definition of the ID $\mu$ is given with its characteristic function (ch.f.) denoted by $\wh \mu$,  
\begin{align}
\label{def:chf:idr}
 \wh \mu (z) = \exp \Big\{
\int_{-\infty}^\infty 
(e^{izy}-1-i zy{\bf 1}_{\{ |y|\le 1 \}} ) \nu (dy) +iaz -\frac{1}{2} b^2 z^2
\Big\},
\end{align}
where $a \in \R,\,b\ge 0$ and $\nu$ is the L\'evy measure satisfying $\nu(\{0\})=0$ and $\int_{-\infty}^\infty (1 \wedge x^2) \nu (dx) <\infty$.
We denote by $\id$ and $\idrp$ the class of all IDs on $\R$ and that of all IDs on $\R_+$, respectively. 

Other miscellaneous definitions are as follows. 
Let $f_+$ be the density of the conditional distribution $F_+$ of $F$ on $\R_+$:
$f_+(x)={\bf 1}_{\R_+}(x) f(x) / \ov F(0),\,x \in \R$. $F^+$ is the distribution given by 
$F^+(x)=F(x)$ for $x\ge 0$ and $F^+(x)=0$ for $x<0$.  
For non-negative functions $\alpha(x)$ and $\beta(x)$, a notation $\alpha(x)\asymp \beta(x)$ implies 
that 
\[
 0<\liminf_{x\to\infty} \big(\alpha(x)/ \beta(x)\big) \le \limsup_{x\to\infty} \big( \alpha(x)/\beta(x) \big) <\infty. 
\]
The set of integers and the set of non-negative integers are respectively 
 denoted by $\Z$ and $\Z_+$.

\section{Relations between Subexponential definitions under $\cala$ and $\cald$}
\label{sec:four:relations}
In the following two lemmas we provide equivalent conditions of  
a.n.i. assumptions, 
and characterize a relation between $\cala_{ac}$ and $\call_{ac}$, respectively.  
Then we investigate the relations between subexponential definitions $\cals_\Delta,\cals_{loc}$ and $\cals_{ac}$. 
\begin{lemma}
\label{lem:equiv:ani:function}
 Let $\alpha (x) \in \cala$, and then  
\begin{align}
\label{lem:ani:function}
\text{there exists a positive non-increasing function}\ \beta\ \text{such that}\ \alpha(x)\sim \beta(x). 
\end{align}
Conversely, if $\alpha (x)\to 0$ as $x\to \infty$, then \eqref{lem:ani:function} implies $\alpha \in \cala$. 
In particular, $f\in\call$ with the condition \eqref{lem:ani:function} implies that $f \in \cala$, and 
$F\in \cala_{\Delta}$ if and only if $F(x+\Delta)$ satisfies \eqref{lem:ani:function}, so that 
$F\in \cala_{loc}$ if and only if $F(x+\Delta)$ satisfies \eqref{lem:ani:function} for all $\Delta$. 
\end{lemma}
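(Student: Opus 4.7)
My plan for the forward implication is to make the canonical choice $\beta(x):=\sup_{t\ge x}\alpha(t)$, extended by a positive constant on the bounded part of $\R$ where $\alpha$ need not be controlled. This $\beta$ is non-increasing by construction, it dominates $\alpha$ and hence is positive on $[x_0,\infty)$, and the hypothesis $\alpha\in\cala$ gives directly that $\beta(x)\sim\alpha(x)$; so \eqref{lem:ani:function} holds.

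For the converse, I would assume \eqref{lem:ani:function} together with $\alpha(x)\to 0$ and verify the two asymptotic conditions defining $\cala$ by an $\epsilon$-sandwich. Fix $\epsilon\in(0,1)$ and choose $M\ge x_0$ such that $(1-\epsilon)\beta(t)\le\alpha(t)\le(1+\epsilon)\beta(t)$ for all $t\ge M$. Monotonicity of $\beta$ immediately gives $\sup_{t\ge x}\alpha(t)\le(1+\epsilon)\beta(x)\le\tfrac{1+\epsilon}{1-\epsilon}\alpha(x)$ for $x\ge M$, while the reverse $\sup_{t\ge x}\alpha(t)\ge\alpha(x)$ is trivial; sending $\epsilon\to 0$ settles the supremum condition. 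For the infimum, set $c_M:=\inf_{x_0\le t\le M}\alpha(t)>0$ by the standing local-boundedness-and-positivity assumption; on $[M,x]$ one has $\alpha(t)\ge(1-\epsilon)\beta(t)\ge(1-\epsilon)\beta(x)$, so
$$\inf_{x_0\le t\le x}\alpha(t)\ge\min\bigl(c_M,(1-\epsilon)\beta(x)\bigr).$$
Here the assumption $\alpha(x)\to 0$ (whence $\beta(x)\to 0$) is essential: for $x$ large enough the minimum on the right equals $(1-\epsilon)\beta(x)\ge\tfrac{1-\epsilon}{1+\epsilon}\alpha(x)$, and combining with the trivial upper bound $\inf_{x_0\le t\le x}\alpha(t)\le\alpha(x)$ and letting $\epsilon\to 0$ yields the desired equivalence.

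For the ``in particular'' assertions, I would first note that any $f\in\call$ that is integrable necessarily tends to $0$ at infinity: otherwise $\liminf_{x\to\infty}f(x)>0$ along some $x_n\to\infty$ together with long-tailedness would force $f\ge c/2$ uniformly on unit-length translates, contradicting $\int f<\infty$. Hence the converse direction applies and $f\in\cala$. For the distributional versions one applies the already-established equivalence to $\alpha(x):=F(x+\Delta)$, observing that $F(x+\Delta)\to 0$ as $x\to\infty$ automatically since $F$ is a probability distribution function; quantifying over $\Delta$ then produces the $\cala_{loc}$ statement.

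I expect the main obstacle to be precisely the compact-interval handling in the infimum condition: without $\alpha(x)\to 0$, the positive constant $c_M$ coming from the behavior of $\alpha$ on $[x_0,M]$ need not be dominated by $\beta(x)$, so the minimum could be attained at an irrelevant point and the asymptotic equivalence would fail. The role of $\alpha(x)\to 0$ is exactly to bury this compact-time contribution below the eventual asymptotic level, and this is the one place where the converse direction requires the extra hypothesis.
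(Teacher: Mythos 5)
Your argument is correct and follows essentially the same route as the paper's proof: the forward direction takes the canonical $\beta(x)=\sup_{t\ge x}\alpha(t)$, the converse exploits the monotonicity of $\beta$ to control $\sup_{t\ge x}\alpha(t)$ and uses $\alpha(x)\to 0$ to bury the compact-interval contribution in the infimum condition, and the ``in particular'' statements reduce to checking that $f$ and $F(x+\Delta)$ vanish at infinity. The only cosmetic differences are that you phrase the comparison as an explicit $\epsilon$-sandwich where the paper manipulates the ratios $\alpha(t)/\beta(t)$ and introduces an auxiliary point $y_x\to\infty$, and that you get $F(x+\Delta)\to 0$ directly from $F(x)\to 1$ rather than via the paper's Cauchy-summability argument on $\sum_n F(x+nc+\Delta)=1$.
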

\begin{lemma}
\label{lem:ldel+ani}
 $F \in \call_\Delta \cap \cala_{ac}$ implies $F \in \call_{ac}$. 
\end{lemma}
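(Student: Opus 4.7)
The plan is to invoke Lemma~\ref{lem:equiv:ani:function} to replace the density $f$ by a non-increasing surrogate, and then to upgrade the ``averaged'' long-tailedness encoded in $F(x+\Delta)\in\call$ to the pointwise long-tailedness $f\in\call$ using that surrogate. The monotonicity is the whole point: averaged slow variation does not in general imply pointwise slow variation, and the $\cala$ hypothesis is precisely what bridges the gap.

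First, since $F\in\cala_{ac}$ has density $f\in\cala$, Lemma~\ref{lem:equiv:ani:function} furnishes a positive non-increasing function $g$ with $f(x)\sim g(x)$; integrability of $f$ at infinity forces $g(x)\to 0$. Set $G(x):=\int_x^{x+c} g(t)\,dt$. Writing $f(t)=(1+o(1))g(t)$ and integrating over $[x,x+c]$ yields $F(x+\Delta)\sim G(x)$. The hypothesis $F\in\call_\Delta$ then gives $G(x+y)\sim G(x)$ for every fixed $y>0$, so $G\in\call$, and hence $H(x):=G(x)/c\in\call$ as well.

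The decisive step exploits the monotonicity of $g$. For $t\in[x,x+c]$ one has $g(x+c)\le g(t)\le g(x)$, so averaging gives the sandwich $g(x+c)\le H(x)\le g(x)$. Shifting the left inequality by $c$ produces $H(x)\le g(x)\le H(x-c)$ for all sufficiently large $x$. Because $H\in\call$, we have $H(x-c)\sim H(x)$, and the sandwich forces $g(x)\sim H(x)$. Combined with $H\in\call$, this yields $g\in\call$; finally, $f\sim g$ transfers long-tailedness to $f$, so $F\in\call_{ac}$.

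The main obstacle is the third step, the passage from the averaged control $G\in\call$ to the pointwise relation $g(x+y)\sim g(x)$. Without a monotone representative this implication would fail in general, and the argument only works because Lemma~\ref{lem:equiv:ani:function} lets us replace $f$ by a non-increasing $g$ on which the two-sided sandwich $H(x)\le g(x)\le H(x-c)$ is available. Verifying $F(x+\Delta)\sim G(x)$ and the basic membership $G\in\call$ is then routine.
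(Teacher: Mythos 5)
Your proof is correct, and it takes a genuinely different route from the paper's. You invoke Lemma~\ref{lem:equiv:ani:function} to replace $f$ by an asymptotically equivalent, genuinely non-increasing surrogate $g$, and then exploit the elementary sandwich $H(x)\le g(x)\le H(x-c)$ for $H(x)=c^{-1}\int_x^{x+c}g$. Since $F(x+\Delta)\sim G(x)=cH(x)$ by uniform asymptotic equivalence on the fixed-length window $[x,x+c]$, the hypothesis $F\in\call_\Delta$ transfers to $H\in\call$, which squeezes $g\sim H\in\call$, and finally $f\sim g$ gives $f\in\call$. The paper instead works directly with the defining sup/inf form of $\cala$: it writes $f(x+y)/f(x)$ as a product of three ratios, two of which tend to $1$ by the a.n.i. property, and bounds the third above and below by ratios $F(x+y\mp c+\Delta)/F(x\pm c+\Delta)$ that tend to $1$ by $F\in\call_\Delta$. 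The two arguments are morally the same exploitation of a.n.i.\ to convert averaged control into pointwise control, but yours cleanly factors through the monotone-representative characterization (and so implicitly reuses a result already established), while the paper's is self-contained and avoids introducing the surrogate. Neither approach is more general here; they simply package the same squeeze differently. One small remark: your aside that integrability forces $g\to 0$ is not actually used in the argument, since you only need the forward direction of Lemma~\ref{lem:equiv:ani:function}, which produces the surrogate without requiring decay.
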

Generally, neither $F_+ \in \cals_{\Delta}$ nor $F^+ \in \cals_{\Delta}$ may imply $F \in \cals_{\Delta}$, which is the same as that 
$F_+ \in \cals_{ac}$ or $F^+ \in \cals_{ac}$ may not imply $F \in \cals_{ac}$. Under the al.d. assumption  
$F^+\in \cals_{\Delta}$ is known to imply $F\in \cals_\Delta$ \cite[Lemma 27]{Foss:Korshunov:Zachary:2013}. 
The same can hold with $F_+\in \cals_{\Delta}$. 
\begin{proposition}
\label{prop:sdel+ani}
 Let $F\in \cald_\Delta$, then $F^+ \in \cals_\Delta \Leftrightarrow F_+\in \cals_\Delta \Leftrightarrow F\in \cals_\Delta$, so that 
under $F\in \cald_{loc}$, $F^+ \in \cals_{loc} \Leftrightarrow F_+\in \cals_{loc} \Leftrightarrow F\in \cals_{loc}$. 
\end{proposition}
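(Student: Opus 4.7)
The plan is to reduce both equivalences to explicit identities relating the convolutions $F\ast F$, $F^+\ast F^+$, and $F_+\ast F_+$, and then to exploit the al.d. hypothesis through dominated convergence. Let $\sigma$ denote the restriction of $F$ to $(0,\infty)$, so that $\sigma$ has total mass $\ov F(0)$, $F^+=F(0)\delta_0+\sigma$, and $F_+=\sigma/\ov F(0)$. Since $x+\Delta\subset(0,\infty)$ for $x>0$, we have $F(x+\Delta)=F^+(x+\Delta)=\ov F(0)F_+(x+\Delta)=\sigma(x+\Delta)$, so the three tail-increment quantities differ only by constant factors on the positive half-axis and membership in $\call_\Delta$ is equivalent across all three.

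First I would establish $F^+\in\cals_\Delta\Leftrightarrow F_+\in\cals_\Delta$ by direct computation: expanding $(F(0)\delta_0+\sigma)\ast(F(0)\delta_0+\sigma)$ yields, for $x>0$,
\[
 F^+\ast F^+(x+\Delta)=2F(0)F(x+\Delta)+\sigma\ast\sigma(x+\Delta)=2F(0)F(x+\Delta)+\ov F(0)^2\,F_+\ast F_+(x+\Delta).
\]
Combined with $F^+(x+\Delta)=\ov F(0)F_+(x+\Delta)=F(x+\Delta)$, the two relations $F^+\ast F^+(x+\Delta)\sim 2F^+(x+\Delta)$ and $F_+\ast F_+(x+\Delta)\sim 2F_+(x+\Delta)$ are algebraically equivalent; this half of the proof needs no monotonicity assumption.

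For $F\in\cals_\Delta\Leftrightarrow F^+\in\cals_\Delta$ I would exploit $\cald_\Delta$. Writing $F=F_-+\sigma$ with $F_-$ the restriction of $F$ to $(-\infty,0]$, the convolution $F_-\ast F_-$ is supported in $(-\infty,0]$ and so contributes nothing to $F\ast F(x+\Delta)$ when $x>c$. A short calculation then gives
\[
 F\ast F(x+\Delta)-F^+\ast F^+(x+\Delta)=2\int_{(-\infty,0]}\bigl[F(x+\Delta-y)-F(x+\Delta)\bigr]\,F(dy).
\]
By $F\in\call_\Delta$ (which follows automatically from either $F\in\cals_\Delta$ or $F^+\in\cals_\Delta$), $F(x+\Delta-y)/F(x+\Delta)\to 1$ pointwise for each fixed $y\le 0$; by $F\in\cald_\Delta$, there is a uniform bound $F(x+\Delta-y)\le KF(x+\Delta)$ valid for all $y\le 0$ and all sufficiently large $x$. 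Dominated convergence then forces the above difference to be $o(F(x+\Delta))$, and since $F(x+\Delta)=F^+(x+\Delta)$ for $x>0$ the two subexponentiality conditions are equivalent.

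The $\cals_{loc}$ statement is an immediate consequence: under $F\in\cald_{loc}$ the al.d. hypothesis holds for every $c>0$, so the argument above applies verbatim to each $\Delta=(0,c]$. The only delicate point is the dominated-convergence step in the third paragraph: one must re-interpret the al.d. bound \eqref{eq:def:ald} (which controls shifts by $z>0$) with $z=-y\ge 0$ to see that the stated uniform dominator is really available, and then combine it with the pointwise $\call_\Delta$ asymptotics to pass to the limit inside the integral. Everything else is bookkeeping around the convolution identities.
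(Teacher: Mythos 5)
Your proof is correct but packaged differently from the paper's. You cleanly separate the argument into an algebraic step, $F^+\in\cals_\Delta\Leftrightarrow F_+\in\cals_\Delta$, which as you note requires no monotonicity, and an analytic step, $F\in\cals_\Delta\Leftrightarrow F^+\in\cals_\Delta$, which you prove by a single dominated-convergence pass over $(-\infty,0]$ with $\cald_\Delta$ supplying a global dominator and $\call_\Delta$ the pointwise limits. The paper instead proves only the implication $F_+\in\cals_\Delta\Rightarrow F\in\cals_\Delta$ directly (deferring the rest to $F_+(x+\Delta)\asymp F^+(x+\Delta)$ and \cite[Lemma~4.20]{Foss:Korshunov:Zachary:2013}), working from $F_+(x)=(F(x)-F(0-))/\ov F(0-)$, a Fubini identity, and an $\alpha$-insensitivity function that splits the integral into a near region $(-\alpha(x),0)$ where ratios converge uniformly and a far region $(-\infty,-\alpha(x)]$ controlled by $\cald_\Delta$. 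Your dominated-convergence packaging avoids the auxiliary function $\alpha$ and is arguably cleaner, though the essential role of $\cald_\Delta$ is the same in both; your explicit identity for $F\ast F(x+\Delta)-F^+\ast F^+(x+\Delta)$ is a nice simplification of the paper's decomposition. One minor notational remark: the paper's $F_+$ is the conditional law on $[0,\infty)$ with normalization $\ov F(0-)$, while you condition on $(0,\infty)$ with normalization $\ov F(0)$; these differ only when $F$ has an atom at the origin, and a short computation shows membership in $\cals_\Delta$ is unaffected by which convention is used, so this is bookkeeping rather than a gap.
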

Since $\cala_{\Delta} \subset \cald_\Delta$ and $\cala_{loc} \subset \cald_{loc}$, the results in Proposition \ref{prop:sdel+ani} 
follow under $\cala_{\Delta}$ and $\cala_{loc}$, respectively.

Now we characterize the relations between subexponential definitions.
\begin{proposition}
 \label{prop:sac+ani}
Assume that $F$ has a density. 
Under $F \in \cala_{ac}$, the following are equivalent. 
\[
 F_+ \in \cals_\Delta,\,F^+ \in \cals_\Delta,\, F\in \cals_\Delta,\, F_+ \in \cals_{loc},\, F^+ \in \cals_{loc},\, F\in \cals_{loc}, \,
F_+ \in \cals_{ac},\, F^+ \in \cals_{ac},\, F\in \cals_{ac}. 
\]
If $F$ is a distribution on $\R_+$, then under $F\in \call_{ac}$ the following are equivalent.  
\[
 F\in \cals_\Delta,\,F\in \cals_{loc},\,F\in \cals_{ac}. 
\]
\end{proposition}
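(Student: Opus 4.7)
The plan is to reduce the nine-way equivalence to (a) horizontal transfers between $F_+$, $F^+$ and $F$ for each of $\cals_\Delta$, $\cals_{loc}$, $\cals_{ac}$ and (b) the vertical chain $\cals_\Delta\Leftrightarrow\cals_{loc}\Leftrightarrow\cals_{ac}$ applied to $F$ (the same argument then applies to $F_+$ and $F^+$, both of which inherit the $\cala_{ac}$ property from $F$). For (a) with $\cals_\Delta$ and $\cals_{loc}$, Proposition~\ref{prop:sdel+ani} applies since $\cala_{ac}\subset\cala_\Delta\subset\cald_\Delta$ and $\cala_{ac}\subset\cala_{loc}\subset\cald_{loc}$; the analogous $\cals_{ac}$ transfer under $\cala_{ac}\subset\cald_{ac}$ is by the same type of argument at the density level, cf.\ \cite{matsui:2022}. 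For (b) it suffices to establish the core equivalence $F\in\cals_\Delta\Leftrightarrow F\in\cals_{ac}$ under $F\in\cala_{ac}$; the $\cals_{loc}$ link is then automatic, since $\cals_{ac}$ is $\Delta$-independent and hence yields $\cals_{\Delta'}$ for every $\Delta'$.

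The proof of the core equivalence rests on the identity $F(x+\Delta)\sim cf(x)$. By Lemma~\ref{lem:equiv:ani:function} we fix a positive non-increasing $g$ with $f\sim g$. Long-tailedness of $g$ (inherited from that of $f$, which is either built into $\cals_{ac}$ or follows from $F\in\cals_\Delta\subset\call_\Delta$ and Lemma~\ref{lem:ldel+ani}) gives $\int_x^{x+c}g(t)\,dt\sim cg(x)$, whence the identity. For $\cals_{ac}\Rightarrow\cals_\Delta$: $f\in\cals$ yields $f\ast f\sim 2f\sim 2g$, so $f\ast f$ is asymptotic to the non-increasing $2g$ and thus lies in $\cala$ by Lemma~\ref{lem:equiv:ani:function}; the same integration argument then produces
\[
F^{\ast 2}(x+\Delta)\sim cf\ast f(x)\sim 2cf(x)\sim 2F(x+\Delta).
\]
For $\cals_\Delta\Rightarrow\cals_{ac}$ we compare $f\ast f$ with $g\ast g$ by splitting $\int f(x-y)f(y)\,dy$ into its tail and compact-$y$ portions: on the tail $f/g\to 1$ uniformly (and similarly for the other factor), while on the compact-$y$ part $f(x-y)\sim f(x)$ by $f\in\call$, so the two convolutions match asymptotically. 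Since $g$ is non-increasing, $g\ast g$ is non-increasing on the relevant half-line, so $f\ast f\sim g\ast g$ places $f\ast f$ in $\cala$ and gives $F^{\ast 2}(x+\Delta)\sim cf\ast f(x)$. Combining with the hypothesis $F^{\ast 2}(x+\Delta)\sim 2F(x+\Delta)\sim 2cf(x)$ forces $f\ast f(x)\sim 2f(x)$.

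The main obstacle is the comparison $f\ast f\sim g\ast g$, which demands uniform control of $(f-g)/g$ on the tails together with a careful treatment of the compact-$y$ region of the convolution; the $\cala$ hypothesis (rather than merely $\call$) is what makes the estimate tractable, by furnishing the monotone surrogate $g$. For the second statement, on $\R_+$ under $\call_{ac}$, the horizontal transfers collapse since $F_+=F^+=F$, and the same strategy applies with nonnegativity of $f$ on $[0,\infty)$ playing the role of the monotone majorant: the identity $F(x+\Delta)\sim cf(x)$ now follows from $f\in\call$ by a standard long-tail density estimate, and the rest of the argument proceeds exactly as above.
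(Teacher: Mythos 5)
Your overall reduction strategy (horizontal transfers between $F_+$, $F^+$, $F$ via Proposition~\ref{prop:sdel+ani}, and a vertical chain $\cals_\Delta\Leftrightarrow\cals_{loc}\Leftrightarrow\cals_{ac}$) matches the paper's in spirit, and the direction $\cals_{ac}\Rightarrow\cals_\Delta$ is sound. However, the converse direction $\cals_\Delta\Rightarrow\cals_{ac}$ as you present it has a genuine gap.

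The problematic claim is that ``since $g$ is non-increasing, $g\ast g$ is non-increasing on the relevant half-line,'' which you use to place $f\ast f$ in $\cala$. This fails on two counts. First, monotonicity does not pass through convolution: if $g$ is the density of the uniform law on $[0,1]$ (non-increasing on $[0,\infty)$), then $g\ast g$ is the triangular density on $[0,2]$, which is \emph{increasing} on $[0,1]$. Second, and more seriously, the surrogate $g$ furnished by Lemma~\ref{lem:equiv:ani:function} from $f\in\cala$ is only defined and non-increasing on some $[x_0,\infty)$; it is not a density, need not be integrable, and $g\ast g$ is not a well-posed object. Since the convolution $\int f(x-y)f(y)\,dy$ receives nonnegligible contributions from $y$ in a compact neighbourhood of the origin (and, in the two-sided case, from $y<0$), the surrogate $g$ carries no information there, and the asymptotic comparison $f\ast f\sim g\ast g$ is itself unsupported.

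What you actually need in this direction is only $f_+^{\ast2}\in\call$ (not $\cala$), which suffices to convert $F_+^{\ast 2}(x+\Delta)\sim c\,f_+^{\ast 2}(x)$. The paper obtains this by first reducing to the one-sided conditional density $f_+$ via Proposition~\ref{prop:sdel+ani}, deriving $f_+\in\call$ from Lemma~\ref{lem:ldel+ani}, and then invoking the closure of $\call$ under convolution for one-sided densities to get $f_+^{\ast2}\in\call$ directly; no monotone majorant of $f\ast f$ is ever required. You should replace the $g\ast g$ comparison with this route (or justify the convolution-closure step by some other means), and note also that passing to $f_+$ first is what avoids the two-sided complications you flag as the ``main obstacle.''
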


\section{Properties of $\cals_\Delta$ and $\cals_{loc}$}
\label{section:prop:sdelta:sloc}
In this section we focus on what we call ``usual properties'' of 
$\call_\Delta$ and $\cals_\Delta$ which are common in the literature 
of the subexponentiality and which are useful to derive the main theorems. 
One of the central problems there is 
the closedness property of $\call_\Delta$ and $\cals_{\Delta}$
under various operations. 
In the positive-half case 
some of these properties were investigated by \cite[Sections 4.5-4.8]{Foss:Korshunov:Zachary:2013}. 
Since $\call_\Delta$ is unchanged in both one- and two-sided cases, we concentrate on properties for $\cals_\Delta$. 
We derive several new results (versions of convolution root and factorization) and moreover, 
extend all results of \cite[Subsection 4.6]{Foss:Korshunov:Zachary:2013} into the two-sided distribution. 
In the derivation, both a.n.i. and al.d. assumptions are exploited. 

We start with an extension of \cite[Theorem 4.21]{Foss:Korshunov:Zachary:2013}. 
\begin{lemma}
\label{lem:ifandonlyif:sub:delta}
 Suppose that $F \in \call_\Delta$ for some $\Delta$. 
Let $\xi_1$ and $\xi_2$ be two independent r.v.'s with common distribution $F$. 
Then the following assertions are equivalent: \\
$(\mathrm{i})$ $F \in \cals_\Delta$. \\
$(\mathrm{ii})$ For every function $\alpha$ such that $\alpha(x)\to \infty$ as $x\to\infty$ and $\alpha(x)<x/2$, 
\begin{align}
\label{ifandonlyif:sub:delta:real}
 \P\big(\xi_1+\xi_2 \in x+\Delta,\,\xi_1 \le -\alpha(x)\big) = \int_{(-\infty,-\alpha(x)]}F(x+\Delta-y)F(dy) &= o(F(x+\Delta)),  \\
\text{and}\qquad  \P\big(\xi_1+\xi_2 \in x+\Delta,\,\xi_1 > \alpha(x),\,\xi_2>\alpha(x) \big) &= o(F(x+\Delta)).  
\label{ifandonlyif:sub:delta:common}
\end{align} 
$(\mathrm{iii})$ There exists a function $\alpha$ such that $\alpha(x)<x/2,\,\alpha(x)\to\infty$ as $x\to\infty$, 
$F(x+\Delta)$ is $\alpha$-insensitive and both \eqref{ifandonlyif:sub:delta:real} and \eqref{ifandonlyif:sub:delta:common} hold. 

If additionally $F$ is a distribution on $\R_+$, then equivalence of the three holds without the condition \eqref{ifandonlyif:sub:delta:real} 
in $(\mathrm{ii})$ and $(\mathrm{iii})$. Moreover, we could replace the condition \eqref{ifandonlyif:sub:delta:common} of $(\mathrm{iii})$ with 
  \begin{align}
\label{ifandlonly:sub:delta+:int}
 \int_{(\alpha(x),x-\alpha(x)]}F(x+\Delta-y)F(dy)=o(F(x+\Delta)). 
\end{align}
\end{lemma}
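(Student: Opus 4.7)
The plan is a nine-region decomposition of $A=\{\xi_1+\xi_2\in x+\Delta\}$ according to whether each $\xi_i$ lies in $(-\infty,-\alpha]$, $(-\alpha,\alpha]$, or $(\alpha,\infty)$, combined with the $\alpha$-insensitivity of $F(x+\Delta)$. Since $F\in\call_\Delta$ is long-tailed on the increment scale, there exists $\alpha_0(x)\to\infty$ with $\alpha_0(x)<x/2$ such that $F(x+y+\Delta)\sim F(x+\Delta)$ uniformly in $|y|\le\alpha_0(x)$. For any $\alpha$ with the same growth properties that is $\alpha$-insensitive, four of the nine cells (those with $\xi_1+\xi_2\le 2\alpha<x$) contribute $0$ for large $x$; the two ``mixed-tail'' cells $\{\xi_1\le-\alpha,\xi_2>\alpha\}$ and its transpose each reduce to $\P(\xi_1\le-\alpha,A)=\int_{(-\infty,-\alpha]}F(x+\Delta-y)F(dy)$ since $\xi_2>\alpha$ is then automatic; the two ``middle'' cells $\{|\xi_1|\le\alpha,\xi_2>\alpha\}$ and its transpose each yield $\int_{(-\alpha,\alpha]}F(x+\Delta-y)F(dy)$; and the last cell is $\P(\xi_1>\alpha,\xi_2>\alpha,A)$.

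For $(\mathrm{iii})\Rightarrow(\mathrm{i})$, $\alpha$-insensitivity collapses each middle integral to $F(x+\Delta)\P(-\alpha<\xi_1\le\alpha)(1+o(1))\to F(x+\Delta)$, so they sum to $2F(x+\Delta)(1+o(1))$; the hypotheses \eqref{ifandonlyif:sub:delta:real} and \eqref{ifandonlyif:sub:delta:common} then give $F^{\ast 2}(x+\Delta)\sim 2F(x+\Delta)$, i.e.\ $F\in\cals_\Delta$. For $(\mathrm{i})\Rightarrow(\mathrm{ii})$ I would run the decomposition at the specific $\alpha_0$: the middle integrals already account for $2F(x+\Delta)(1+o(1))$, and since $F^{\ast 2}(x+\Delta)\sim 2F(x+\Delta)$, the remaining non-negative summands are collectively, hence individually, $o(F(x+\Delta))$ at $\alpha_0$. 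To upgrade to an arbitrary admissible $\alpha$, introduce $\beta(x):=\min(\alpha(x),\alpha_0(x))$: this still tends to infinity and $F(x+\Delta)$ remains $\beta$-insensitive, so the same argument yields both bounds at $\beta$; since $(-\infty,-\alpha]\subset(-\infty,-\beta]$ and $\{\xi_i>\alpha\}\subset\{\xi_i>\beta\}$, the bounds transfer to $\alpha$. The implication $(\mathrm{ii})\Rightarrow(\mathrm{iii})$ is immediate by choosing $\alpha=\alpha_0$.

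For $F$ supported in $\R_+$, positivity of the $\xi_i$ empties all left cells and trivializes \eqref{ifandonlyif:sub:delta:real}, so the first condition may be dropped throughout, and the remaining argument goes through with only the center-mixed and both-large pieces. To substitute \eqref{ifandonlyif:sub:delta:common} by \eqref{ifandlonly:sub:delta+:int}, I would further split $\{\xi_1>\alpha,\xi_2>\alpha\}\cap A$ into the ``interior'' $\xi_1\in(\alpha,x-\alpha]$ (where $\xi_2>\alpha$ is automatic, reproducing exactly the integral in \eqref{ifandlonly:sub:delta+:int}) and the ``boundary'' $\xi_1\in(x-\alpha,x+c-\alpha]$ (which forces $\xi_2\in(\alpha,\alpha+c]$); the boundary contribution is bounded by $F(\alpha+\Delta)F(x-\alpha+\Delta)$, and by $\alpha$-insensitivity this is $F(\alpha+\Delta)F(x+\Delta)(1+o(1))=o(F(x+\Delta))$ since $F(\alpha+\Delta)\to 0$.

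The main obstacle I anticipate is the universal-$\alpha$ clause in $(\mathrm{ii})$: the decomposition argument is only ``for free'' when $\alpha$ is itself an insensitivity function for $F(x+\Delta)$, so the $\min$ trick is essential to reduce the general case to this one. Beyond that, the delicate step is careful bookkeeping of the four vanishing cells under the constraint $\alpha(x)<x/2$, and verifying that the automatic inclusion $\xi_2>\alpha$ on $\{\xi_1\le-\alpha\}\cap A$ (respectively $\xi_1>\alpha$ on $\{\xi_2\le-\alpha\}\cap A$) holds for large $x$; everything else is standard dominated-convergence-style routine.
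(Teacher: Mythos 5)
Your nine-cell decomposition is exactly the paper's three-term decomposition (the paper just groups the five nonempty cells by symmetry from the start), so the overall architecture is the same, and your treatments of $(\mathrm{iii})\Rightarrow(\mathrm{i})$, $(\mathrm{ii})\Rightarrow(\mathrm{iii})$, the $\R_+$ case, and the substitution of \eqref{ifandonlyif:sub:delta:common} by \eqref{ifandlonly:sub:delta+:int} all match the paper essentially line by line.

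The one place you differ is $(\mathrm{i})\Rightarrow(\mathrm{ii})$, where you route through an insensitivity function $\alpha_0$ and then a $\min(\alpha,\alpha_0)$-trick to upgrade to an arbitrary admissible $\alpha$. That is correct, but it is more machinery than the paper uses. The paper lower-bounds the center term directly by Fatou's lemma:
\[
 \liminf_{x\to\infty}\int_{(-\alpha(x),\alpha(x)]}\frac{F(x+\Delta-y)}{F(x+\Delta)}F(dy)\ge \int_{\R}\liminf_{x\to\infty}\frac{F(x+\Delta-y)}{F(x+\Delta)}\,{\bf 1}_{\{|y|\le\alpha(x)\}}F(dy)=1,
\]
which only needs $F\in\call_\Delta$ (so that $F(x+\Delta-y)/F(x+\Delta)\to 1$ pointwise in $y$) and $\alpha(x)\to\infty$; no $\alpha$-insensitivity is required. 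Since $F\in\cals_\Delta$ forces $\P(B)/F(x+\Delta)\to 2$, the two remaining nonnegative cells are then automatically $o(F(x+\Delta))$ for the \emph{arbitrary} $\alpha$ at hand, making your transfer step unnecessary. So: same proof, slightly less efficient implementation of the one step where insensitivity is not actually needed.
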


The second one is the closedness of $\cals_\Delta$ under factorization. 
\begin{proposition}[factrization]
\label{prop:factrization:delta}
 Let $H=F\ast G \in \cals_{\Delta}$ and $G(x+\Delta)=o(H(x+\Delta))$. 
If $F\in \cala_{\Delta}$ and $[$$H\in \cald_{\Delta}$ or $G(x+\Delta)=o(F(x+\Delta))$$]$, then $F\in \cals_{\Delta}$.
\end{proposition}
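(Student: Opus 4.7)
My plan for Proposition \ref{prop:factrization:delta} is to first establish the asymptotic equivalence $F(x+\Delta) \sim H(x+\Delta)$, and then transfer the $\cals_\Delta$-property from $H$ to $F$. Starting from $H(x+\Delta) = \int F(x+\Delta-y)\,G(dy)$ and $\int G(dy)=1$, I would write the key identity
\[
H(x+\Delta) - F(x+\Delta) \;=\; \int_{-\infty}^{\infty} \bigl[F(x+\Delta-y) - F(x+\Delta)\bigr]\,G(dy),
\]
and reduce the problem to showing the right-hand side is $o(F(x+\Delta))$.

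I would split the integration into $y \le -N$, $-N < y \le N$, and $y > N$ for a large parameter $N$, sending $N\to\infty$ after $x\to\infty$. The middle piece is handled by $F\in\cala_\Delta$: Lemma \ref{lem:equiv:ani:function} lets me assume $F(x+\Delta)\sim\beta(x)$ for some non-increasing $\beta$, and then the two relations \eqref{eq:def:ani} give $F(x+\Delta-y) \sim F(x+\Delta)$ uniformly on bounded $|y|$, so the middle integral is $o(F(x+\Delta))$. The left tail $y \le -N$ is bounded by $\sup_{t\ge x+N} F(t+\Delta) \cdot G((-\infty,-N])$, which by the $\sup$-condition of $\cala$ is at most $(1+o(1))F(x+\Delta)\cdot G((-\infty,-N])$, and becomes negligible after $N\to\infty$.

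The right tail $y > N$ is the main obstacle, since $F(x+\Delta-y)$ can be of order unity when $y$ approaches $x$; the two alternative hypotheses in the statement correspond to the two routes for closing this gap. Under $G(x+\Delta)=o(F(x+\Delta))$, I would decompose the tail as $\sum_{k\ge 0} \int_{N+kc < y \le N+(k+1)c} F(x+\Delta-y)\,G(dy)$, bound each summand by $F((x-N-(k+1)c)+\Delta)\cdot G((N+kc)+\Delta)$ using the asymptotic monotonicity from $\cala$, and recognise the resulting sum as comparable to $F\ast G(x+\Delta)$ but carrying the extra factor $G(\cdot+\Delta)/F(\cdot+\Delta)\to 0$, yielding $o(F(x+\Delta))$. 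Under the alternative $H\in\cald_\Delta$, the almost-decreasing property of $H$ together with $F\in\cala_\Delta$ first pins down the two-sided comparison $F(x+\Delta)\asymp H(x+\Delta)$, after which $G(x+\Delta)=o(H(x+\Delta))$ translates into the summable tail bound $\sum_k G((N+kc)+\Delta) = o\bigl(\sum_k H((N+kc)+\Delta)\bigr)$, which controls the right-tail contribution.

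Once $F(x+\Delta)\sim H(x+\Delta)$ is in place, $F\in\call_\Delta$ follows immediately from $H\in\call_\Delta$ via $F(x+t+\Delta) \sim H(x+t+\Delta) \sim H(x+\Delta) \sim F(x+\Delta)$ for each fixed $t$. The remaining subexponentiality condition $F^{\ast 2}(x+\Delta)\sim 2F(x+\Delta)$ can then be obtained either by applying Lemma \ref{lem:ifandonlyif:sub:delta}(iii) to $H$ and transferring the integral conditions \eqref{ifandonlyif:sub:delta:real}--\eqref{ifandonlyif:sub:delta:common} to $F$ via $H=F\ast G$ and the just-proved $F\sim H$, or alternatively by re-applying the factorization argument to $H^{\ast 2} = F^{\ast 2} \ast G^{\ast 2}$ after verifying $G^{\ast 2}(x+\Delta) = o(H^{\ast 2}(x+\Delta))$. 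The hard part throughout is the right-tail analysis: the two alternative assumptions in the hypothesis play essential and genuinely distinct roles there.
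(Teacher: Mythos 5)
Your high-level strategy coincides with the paper's: show $F(x+\Delta)\sim H(x+\Delta)$ and then transfer $\cals_\Delta$ from $H$ to $F$. But the execution of the right-tail step --- which, as you correctly identify, is where all the difficulty sits --- has genuine gaps.

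Under the alternative $G(x+\Delta)=o(F(x+\Delta))$, your bound on $\int_{y>N} F(x+\Delta-y)\,G(dy)$ by a sum $\sum_k F((x-N-(k+1)c)+\Delta)\,G((N+kc)+\Delta)$ and the appeal to its being ``comparable to $F\ast G(x+\Delta)$ with an extra factor $G(\cdot+\Delta)/F(\cdot+\Delta)\to 0$'' is circular: $F\ast G = H$, and you do not yet know that $H\sim F$, nor that $F$ itself has the subexponential structure needed to make a convolution bound of this kind useful. The paper avoids this by splitting with an $\alpha(x)\to\infty$ (so $F(x+\Delta)$ is $\alpha$-insensitive) rather than a fixed $N$, peeling off the piece where both summands exceed $\alpha(x)$, and controlling that piece via the equivalent characterization in Lemma~\ref{lem:ifandonlyif:sub:delta} applied to $H\in\cals_\Delta$ (concretely: $\int_{(\alpha(x),x-\alpha(x)]} H(x+\Delta-y)\,H(dy)=o(H(x+\Delta))$, which is where the hypothesis $H\in\cals_\Delta$ actually enters). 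Your sketch never makes contact with Lemma~\ref{lem:ifandonlyif:sub:delta} in the right-tail estimate, and without it the estimate does not close.

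Under the alternative $H\in\cald_\Delta$, the assertion that ``the almost-decreasing property of $H$ together with $F\in\cala_\Delta$ first pins down the two-sided comparison $F(x+\Delta)\asymp H(x+\Delta)$'' is again assuming what is to be shown: from $F\in\cala_\Delta$ one gets (as in the paper) $\limsup F(x+\Delta)/H(x+\Delta)\le 1$, but the lower bound $\liminf F(x+\Delta)/H(x+\Delta)>0$ is exactly the content of the whole argument, and $H\in\cald_\Delta$ alone does not supply it. The further claim that $G(x+\Delta)=o(H(x+\Delta))$ gives $\sum_k G((N+kc)+\Delta)=o\bigl(\sum_k H((N+kc)+\Delta)\bigr)$ is false: both sides equal $G((N,\infty))$ and $H((N,\infty))$ respectively, and a pointwise $o$-relation on increments does not propagate to the ratio of total tail masses. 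What $H\in\cald_\Delta$ is actually used for in the paper is to bound $\int_{(-\infty,\alpha(x)]} G(x+\Delta-y)\,F(dy)$ by a constant times $\sup_{y\le\alpha(x)}G(x+\Delta-y)/H(x+\Delta-y)\cdot H(x+\Delta-\alpha(x))\cdot F(\alpha(x))$, which vanishes.

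Finally, once $F(x+\Delta)\sim H(x+\Delta)$ is in hand, the clean conclusion is a direct application of Proposition~\ref{prop:asympt:equiv}: you have $F\in\call_\Delta$, $H\in\cals_\Delta$, and $H\in\cald_\Delta$ (given, or else from $F\in\cala_\Delta\subset\cald_\Delta$ and Lemma~\ref{lem:ald:symp:equiv} via the just-proved equivalence). Your two alternative routes for this last step (re-deriving the Lemma~\ref{lem:ifandonlyif:sub:delta} conditions by hand, or factorizing $H^{\ast 2}$) are unnecessary detours and would require their own justification.
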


Next we show several versions of 
[the closedness under convolution root] with a.n.i. assumptions. 

\begin{proposition}[convolution root: positive-half]
\label{prop:convroot:sdel+ani}
Let $F$ be a distribution on $\bbr_+$.
If $F\in \cala_\Delta$ and $F^{*N} \in  \cals_{\Delta}$, then $F \in  \cals_{\Delta}$. Thus, 
if $F\in \cala_{loc}$ and $F^{*N} \in  \cals_{loc}$, then $F \in  \cals_{loc}$.
\end{proposition}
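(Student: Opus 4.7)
The plan is to verify the defining conditions of $\cals_\Delta$ using the positive-half characterization in Lemma \ref{lem:ifandonlyif:sub:delta}: first establish $F\in\call_\Delta$, then verify the middle-region integral criterion \eqref{ifandlonly:sub:delta+:int} together with $\alpha$-insensitivity. The $\cals_{loc}$ claim then follows by applying the $\cals_\Delta$ conclusion to each $\Delta=(0,c]$ separately.

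For long-tailedness, Lemma \ref{lem:equiv:ani:function} permits the representation $F(x+\Delta)\sim\phi(x)$ with $\phi$ positive and non-increasing, so $\limsup_{x\to\infty}F(x+h+\Delta)/F(x+\Delta)\le 1$ is immediate for every $h>0$. For the matching $\liminf\ge 1$ I exploit $F^{*N}\in\call_\Delta$ via the identity
\[
F^{*N}(x+\Delta)=\int_{[0,\infty)}F(x+\Delta-y)\,F^{*(N-1)}(dy).
\]
Restricting to $y\in[0,K]$ for $K$ large and using $F(x+\Delta-y)\sim\phi(x-y)\ge\phi(x)\sim F(x+\Delta)$ together with $F^{*(N-1)}([0,K])\to 1$ as $K\to\infty$ gives $F^{*N}(x+\Delta)\ge(1-o(1))F(x+\Delta)$; a matching upper bound $F^{*N}(x+\Delta)=O(F(x+\Delta))$ follows from $F(x+\Delta-y)\le(1+\varepsilon)\phi(x-y)$ (by the $\sup$-part of the a.n.i.\ condition) together with a tail truncation in $y$. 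This yields $F^{*N}(x+\Delta)\asymp F(x+\Delta)$, and combining it with $F^{*N}(x+h+\Delta)\sim F^{*N}(x+\Delta)$ produces $\liminf_{x\to\infty}F(x+h+\Delta)/F(x+\Delta)\ge 1$, so $F\in\call_\Delta$.

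For the integral criterion I choose $\alpha(x)\to\infty$ with $\alpha(x)<x/2$ and $F(x+\alpha(x)+\Delta)\sim F(x+\Delta)$, which is possible by $F\in\call_\Delta$ via a standard diagonal argument and simultaneously supplies $\alpha$-insensitivity. Applying Lemma \ref{lem:ifandonlyif:sub:delta}(iii) to $F^{*N}\in\cals_\Delta$ on $\R_+$ yields
\[
\int_{(\alpha(x),x-\alpha(x)]}F^{*N}(x+\Delta-y)\,F^{*N}(dy)=o(F^{*N}(x+\Delta)).
\]
Partitioning the $y$-range into subintervals of length $c$ converts this integral, as well as $\int_{(\alpha(x),x-\alpha(x)]}F(x+\Delta-y)F(dy)$, into comparable sums of products of the form $F^{*N}(z+\Delta)F^{*N}(z'+\Delta)$ versus $F(z+\Delta)F(z'+\Delta)$; the $\asymp$-comparability $F^{*N}(z+\Delta)\asymp F(z+\Delta)$ established above, applied uniformly in both factors for $z,z'\to\infty$ (both endpoints tend to infinity in the middle range), transfers the $o(\cdot)$ estimate from the $F^{*N}$-integral to the $F$-integral, producing the required $o(F(x+\Delta))$ bound and hence $F\in\cals_\Delta$ via Lemma \ref{lem:ifandonlyif:sub:delta}.

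The main obstacle is ensuring the $\asymp$-comparability $F^{*N}(z+\Delta)\asymp F(z+\Delta)$ is uniform in $z\to\infty$ rather than merely pointwise, so that it legitimately transfers through the discretisation of both sides. The monotone majorant $\phi$ afforded by $\cala_\Delta$ supplies exactly the uniform control needed for this lift, which is the feature that distinguishes the a.n.i.\ hypothesis from a bare $\cald_\Delta$ almost-decrease assumption and makes the convolution-root reduction go through.
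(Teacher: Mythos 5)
The paper's own proof of Proposition \ref{prop:convroot:sdel+ani} is a one-line citation: $F \in \cala_\Delta$ implies Assumptions A, B, C of Watanabe (2021, Theorem 4.1), and the result follows. Your attempt is a from-scratch argument via Lemma \ref{lem:ifandonlyif:sub:delta}, which is a genuinely different route — but it has a gap at its load-bearing step.

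The gap is the claimed two-sided comparison $F^{*N}(x+\Delta)\asymp F(x+\Delta)$. The lower bound $\liminf F^{*N}(x+\Delta)/F(x+\Delta)\ge 1$ is fine (restrict the convolution integral to $y\in[0,K]$ and use that $\phi$ is non-increasing). But the upper bound $F^{*N}(x+\Delta)=O(F(x+\Delta))$ is exactly what needs proving, and your derivation of it is circular. You propose bounding $F(x+\Delta-y)\le(1+\vep)\phi(x-y)$ and then truncating the tail in $y$; however, to control $\int_{[0,K]}\phi(x-y)\,F^{*(N-1)}(dy)$ against $\phi(x)$ you need $\phi(x-K)/\phi(x)$ bounded, i.e.\ $F\in\call_\Delta$ — which is precisely the claim being established. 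Worse, even if $\phi$ is long-tailed, $\phi(x-y)$ for $y$ of order $x$ can dwarf $\phi(x)$ (take $\phi(x)=e^{-\sqrt x}$: $\phi(x/2)/\phi(x)\to\infty$), so the contribution of intermediate $y$ to $F^{*N}(x+\Delta)$ is not controlled by $\phi(x)$ without first knowing subexponentiality of $F$. The subsequent inference — ``$F^{*N}(x+\Delta)\asymp F(x+\Delta)$ together with $F^{*N}(x+h+\Delta)\sim F^{*N}(x+\Delta)$ gives $\liminf F(x+h+\Delta)/F(x+\Delta)\ge 1$'' — is also a non sequitur even granting the $\asymp$: bounded two-sided ratios do not transfer a $\sim 1$ limit through the chain $F/F^{*N}\cdot F^{*N}(\,\cdot+h)/F^{*N}\cdot F^{*N}/F$. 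Finally, the last step of your argument, converting $o(F^{*N}(x+\Delta))$ into $o(F(x+\Delta))$, again invokes the unproved upper bound.

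To put the difficulty in perspective: Remark \ref{rem:conv:root} of the paper emphasizes that $\cals_\Delta$ is \emph{not} closed under convolution roots in general (Watanabe--Yamamuro 2017, Watanabe 2019), so the a.n.i.\ hypothesis must play a deeper structural role than supplying a monotone majorant for a comparison argument. The mechanism by which $\cala_\Delta$ rules out the known counterexamples is exactly what Watanabe (2021, Theorem 4.1) encodes, and a short elementary re-derivation along the lines you sketch would have to engage with that; as written, the proposal does not.
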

\begin{proposition}[convolution root]
\label{prop:convroot:sdel+ani:two-side}
If $F\in \cala_\Delta$ and $F^{*N} \in  \cals_{\Delta}\cap \cald_{\Delta}$, then $F \in  \cals_{\Delta}$. 
Consequently, if $F\in \cala_{loc}$ and $F^{*N} \in  \cals_{loc}\cap \cald_{loc}$, then $F \in  \cals_{loc}$.
\end{proposition}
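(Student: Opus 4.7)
The plan is to reduce the two-sided convolution-root problem to the positive-half Proposition \ref{prop:convroot:sdel+ani} by using Proposition \ref{prop:sdel+ani} as the bridge. Since $\cala_\Delta\subset \cald_\Delta$ (as noted right after Proposition \ref{prop:sdel+ani}), $F\in \cald_\Delta$, so Proposition \ref{prop:sdel+ani} reduces the goal $F\in \cals_\Delta$ to $F^+\in \cals_\Delta$. Because $F^+(x+\Delta)=F(x+\Delta)$ for $x\ge 0$ gives $F^+\in \cala_\Delta$, the positive-half result Proposition \ref{prop:convroot:sdel+ani} reduces the task further to showing $(F^+)^{*N}\in \cals_\Delta$.

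To prove $(F^+)^{*N}\in \cals_\Delta$, I would establish the tail equivalence
\[
(F^+)^{*N}(x+\Delta)\sim F^{*N}(x+\Delta)\quad (x\to\infty),
\]
and combine it with $(F^{*N})^+\in \cals_\Delta$ (obtained from Proposition \ref{prop:sdel+ani} applied to $F^{*N}\in \cals_\Delta\cap \cald_\Delta$) together with the closure of $\cals_\Delta$ under tail equivalence established in Section \ref{section:prop:sdelta:sloc}. Taking $X_1,\dots,X_N$ iid $\sim F$ and writing $X_i=X_i^+-X_i^-$ with $X_i^\pm=\max(\pm X_i,0)$, the identity $\sum_i X_i^+=\sum_i X_i+T$ for $T=\sum_i X_i^-\ge 0$ yields, after conditioning on the sign pattern of the $X_i$'s, that the difference between $(F^+)^{*N}(x+\Delta)$ and $F^{*N}(x+\Delta)$ is a sum of convolutions of positive-part subsum tails against non-positive subsums. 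To show each term matches asymptotically, I would split the resulting integrals at an insensitivity scale $y=-\alpha(x)$ with $\alpha(x)\to \infty$ and $\alpha(x)<x/2$: on the small-$|y|$ region, use $F^{*N}\in \call_\Delta$ for pointwise asymptotics and $F^{*N}\in \cald_\Delta$ as a uniform majorant; on $y<-\alpha(x)$, invoke condition \eqref{ifandonlyif:sub:delta:real} of Lemma \ref{lem:ifandonlyif:sub:delta} applied to $F^{*N}\in \cals_\Delta$, which is precisely the statement that the contribution from large negative shifts is $o(F^{*N}(x+\Delta))$.

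The main obstacle is the large-$|y|$ regime, since the distribution of $T$ involves the left tail of $F$ that is not directly constrained by $F\in \cala_\Delta$. The crucial input there is the two-sided subexponentiality of $F^{*N}$ encoded in Lemma \ref{lem:ifandonlyif:sub:delta}, which suppresses exactly these contributions. The $\cals_{loc}$ conclusion then follows at once by applying the $\cals_\Delta$ statement for every $\Delta=(0,c]$ with $c>0$.
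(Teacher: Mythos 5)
Your high-level reduction matches the paper's: prove $(F^+)^{\ast N}(x+\Delta)\sim F^{\ast N}(x+\Delta)$, then chain Proposition~\ref{prop:asympt:equiv}, Proposition~\ref{prop:convroot:sdel+ani}, and Proposition~\ref{prop:sdel+ani} to conclude $F\in\cals_\Delta$; and the sign-pattern decomposition you describe, namely that the difference is
$\sum_{k=1}^{N-1}\binom{N}{k}q^k(1-q)^{N-k}\int_{(-\infty,0)}\bigl[F_+^{\ast k}(x+\Delta)-F_+^{\ast k}(x-y+\Delta)\bigr]F_-^{\ast(N-k)}(dy)$
with $q=\ov F(0-)$, is exactly the identity the paper works from. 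However, your treatment of the tail equivalence --- which is where all the difficulty lies --- has a concrete gap on $y<-\alpha(x)$. You appeal to \eqref{ifandonlyif:sub:delta:real} of Lemma~\ref{lem:ifandonlyif:sub:delta} applied to $F^{\ast N}$, but that condition controls $\int_{(-\infty,-\alpha(x)]}F^{\ast N}(x+\Delta-y)\,F^{\ast N}(dy)$, whereas the integral you need to suppress is against the measure $F_-^{\ast(N-k)}(dy)$. These two measures are not comparable on the negative half-line --- $F_-^{\ast(N-k)}$ need not even be absolutely continuous with respect to $F^{\ast N}$ (take $F_-=\delta_{-1}$ and $F_+$ nonatomic, so that $F^{\ast N}$ gives zero mass to the atom of $F_-^{\ast(N-k)}$). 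Even after replacing $F_+^{\ast k}$ by a shifted $F^{\ast N}$ via \eqref{ineq:F+}, the substitution of integrating measures your argument tacitly performs is unavailable, so Lemma~\ref{lem:ifandonlyif:sub:delta} does not ``suppress exactly these contributions'' as stated. A parallel issue occurs on the small-$|y|$ region: the integrand is a difference of $F_+^{\ast k}$ values, and since $\cala_\Delta$ is not obviously closed under convolution, $F\in\cala_\Delta$ does not directly give uniform control of $F_+^{\ast k}(x+\Delta)-F_+^{\ast k}(x-y+\Delta)$ for $k\ge 2$; ``pointwise asymptotics from $F^{\ast N}\in\call_\Delta$'' only bounds each term by $O(F^{\ast N}(x+\Delta))$, not their difference by $o(F^{\ast N}(x+\Delta))$.

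The missing ingredient, which is the core of the paper's proof, is the inductive Lemma~\ref{lem:vepkax}: for each $k\le N$ one constructs error terms $\vep_k(A,x)$, \emph{uniform over the shift $y>0$}, with $F_+^{\ast k}(x+\Delta)-F_+^{\ast k}(x+y+\Delta)\ge -\vep_k(A,x)$ and $\lim_{A\to\infty}\limsup_{x\to\infty}\vep_k(A,x)/F^{\ast N}(x+\Delta)=0$. Because the bound does not depend on $y$, integrating against the probability measure $F_-^{\ast(N-k)}(dy)$ costs nothing, and the left tail of $F_-$ never has to be compared with $F^{\ast N}$. The hypothesis $F^{\ast N}\in\cals_\Delta\cap\cald_\Delta$ enters only in the inductive step of that lemma, where Lemma~\ref{lem:ifandonlyif:sub:delta} is used in its legitimate form (i.e.\ $F^{\ast N}$ integrated against $F^{\ast N}(dy)$) together with an auxiliary-variable device that absorbs the shift $y$. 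Without something playing the role of Lemma~\ref{lem:vepkax}, your sketch does not establish $(F^+)^{\ast N}(x+\Delta)\sim F^{\ast N}(x+\Delta)$.
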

\begin{corollary}[convolution root for density]
\label{prop:convroot:sden+ani}
 Let $f\in \cala$. If $f^{*N} \in  \cals\cap\cald$, then $f \in  \cals$.
\end{corollary}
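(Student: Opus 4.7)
The plan is to transfer the density-level hypotheses into distribution-level $\Delta$-hypotheses, invoke Proposition \ref{prop:convroot:sdel+ani:two-side} to conclude $F\in\cals_\Delta$, and finally upgrade back to the density level via Proposition \ref{prop:sac+ani}. Throughout let $F$ be the absolutely continuous distribution with density $f$; then $F^{*N}$ has density $f^{*N}$.

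First I would establish $F\in\cala_{ac}$ (and hence $F\in\cala_\Delta$ for every $\Delta$). Since $f\in\cala$, Lemma \ref{lem:equiv:ani:function} furnishes a non-increasing positive $g$ with $f(x)\sim g(x)$. For any $\Delta=(0,c]$, one has $F(x+\Delta)=\int_0^c f(x+s)\,ds\sim\int_0^c g(x+s)\,ds$, which is non-increasing in $x$ and tends to $0$; the converse direction of Lemma \ref{lem:equiv:ani:function} then gives $F(x+\Delta)\in\cala$.

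Next I would show $F^{*N}\in\cals_\Delta\cap\cald_\Delta$ for every $\Delta$. Integrating $f^{*N}(x+y+s)\le K f^{*N}(x+s)$ (from $f^{*N}\in\cald$) over $s\in[0,c]$ yields $F^{*N}(x+y+\Delta)\le K F^{*N}(x+\Delta)$, so $F^{*N}\in\cald_\Delta$. The same $\cald$-bound provides a finite dominant, so $f^{*N}\in\call$ and dominated convergence give
\[
\frac{F^{*N}(x+\Delta)}{f^{*N}(x)}=\int_0^c \frac{f^{*N}(x+s)}{f^{*N}(x)}\,ds\longrightarrow c,
\]
i.e.\ $F^{*N}(x+\Delta)\sim c f^{*N}(x)$. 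Since $f^{*N}\in\cals$ gives $f^{*2N}(x)\sim 2 f^{*N}(x)$, and $\cald$ is preserved under $\sim$ by Lemma \ref{lem:ald:symp:equiv}, the same argument applied to $f^{*2N}$ yields $F^{*2N}(x+\Delta)\sim c f^{*2N}(x)\sim 2 F^{*N}(x+\Delta)$. Together with $f^{*N}\in\call$, the asymptotic $F^{*N}(x+\Delta)\sim c f^{*N}(x)$ also gives $F^{*N}\in\call_\Delta$, so $F^{*N}\in\cals_\Delta$.

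With $F\in\cala_\Delta$ and $F^{*N}\in\cals_\Delta\cap\cald_\Delta$, Proposition \ref{prop:convroot:sdel+ani:two-side} gives $F\in\cals_\Delta$; since $\Delta$ was arbitrary, $F\in\cals_{loc}$. Finally, $F\in\cala_{ac}$ together with $F\in\cals_\Delta$ places us inside the equivalences of Proposition \ref{prop:sac+ani}, yielding $F\in\cals_{ac}$, i.e.\ $f\in\cals$. The hard part will be the density-to-distribution transfer in the preceding step: the almost-decreasing hypothesis on $f^{*N}$ is essential there as the finite dominant enabling the passage from pointwise density asymptotics to their integrated $\Delta$-versions, without which $\cals$ at the density level would not transfer to $\cals_\Delta$ at the distribution level.
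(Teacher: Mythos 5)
Your proposal follows exactly the same route as the paper: transfer $f\in\cala$ to $F\in\cala_\Delta$, transfer $f^{*N}\in\cals\cap\cald$ to $F^{*N}\in\cals_\Delta\cap\cald_\Delta$, invoke Proposition \ref{prop:convroot:sdel+ani:two-side}, and finish with Proposition \ref{prop:sac+ani}. The paper states the two transfer steps without elaboration; you have correctly filled in the details, so the argument is sound and matches the intended proof.
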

\begin{remark}
\label{rem:conv:root}
 $(\mathrm{i})$ 
There are many studies about closedness of subexponentiality with the convolution root operation. 
The closedness of $\cals$ for positive-half distributions was shown by \cite{Embrechts:Goldie:Veraverbeke:1979} and 
that for two-sided ones was proved by \cite{Watanabe:2008}. However, the classes $\cals_{ac}, \cals_{\Delta}$ 
and $\cals_{loc}$ are known not to be closed under convolution roots, see \cite[Corollary 1.1]{Watanabe:Yamamuro:2017} 
and of \cite[Corollary 1.1]{Watanabe:2019}. 
Then the next natural question has been that under what conditions the closedness holds. 
Propositions \ref{prop:convroot:sdel+ani}, \ref{prop:convroot:sdel+ani:two-side} and Corollary \ref{prop:convroot:sden+ani}
give some solutions to the problem. 
Notice that the closedness of under convolution roots often holds within the class of compound Poisson distributions 
or IDs. We discuss this topic with unsolved future problems in Section \ref{sec:id}, 
see Remark \ref{rem:conv:root:cp:id}. 
\\
 $(\mathrm{ii})$ Corollary \ref{prop:convroot:sden+ani} is an improvement of \cite[Theorem 3.13]{matsui:2022}, where 
either $f^{\ast k} \in \cala$ for all $1\le k \le N$, or $f\in \call$ is required. 
\end{remark}
The following results are two-sided extension of one-sided properties: Theorems 4.22, 4.23, Corollary 4.24 and Theorem 4.25 of 
\cite{Foss:Korshunov:Zachary:2013}. In particular Kesten bound (Proposition \ref{prop:kesten}) is useful to derive results in the following sections. 
\begin{proposition}[asymptotic equivalence]
\label{prop:asympt:equiv}
Suppose that $F \in \cals_{\Delta} \cap \cald_\Delta$, 
$G\in \call_\Delta$ and $G(x+\Delta)\asymp F(x+\Delta) $ as $x \to \infty$. 
Then  $G \in \cals_{\Delta}$. In particular, if $G(x+\Delta)\sim cF(x+\Delta) $ as $x \to \infty$ for some $c >0$, then $G \in \cals_{\Delta}$.  
Moreover, if additionally $G$ is a distribution on $\R_+$, then the  assertion holds without assumption $\cald_\Delta$ on $F$. 
\end{proposition}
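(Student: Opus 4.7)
The plan is to verify the characterization of Lemma \ref{lem:ifandonlyif:sub:delta} for $G$. Since $G\in\call_\Delta$ is already part of the hypothesis, it suffices to exhibit a function $\alpha$ with $\alpha(x)\to\infty$ and $\alpha(x)<x/2$ for which \eqref{ifandonlyif:sub:delta:real} and \eqref{ifandonlyif:sub:delta:common} hold with $F$ replaced by $G$. I would choose $\alpha$ to be simultaneously an insensitivity scale for both $F(x+\Delta)$ and $G(x+\Delta)$ (possible since both lie in $\call_\Delta$) and such that Lemma \ref{lem:ifandonlyif:sub:delta}(ii) applied to $F\in\cals_\Delta$ also holds with the same $\alpha$. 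The $\asymp$ assumption provides constants $0<c_1\le c_2<\infty$ with $c_1 F(x+\Delta)\le G(x+\Delta)\le c_2 F(x+\Delta)$ for all large $x$, and the reasoning of Lemma \ref{lem:ald:symp:equiv} promotes $F\in\cald_\Delta$ to $G\in\cald_\Delta$.

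For the left-tail condition \eqref{ifandonlyif:sub:delta:real}, when $y\le -\alpha(x)$ the argument $x-y$ is large, so combining $G\le c_2 F$ (from $\asymp$) with $F\in\cald_\Delta$ gives $G(x+\Delta-y)\le c_2 K F(x+\Delta)$ uniformly on $\{y\le-\alpha(x)\}$. Consequently
\[
\int_{(-\infty,-\alpha(x)]} G(x+\Delta-y)\,G(dy)\;\le\; c_2 K\, F(x+\Delta)\, G\bigl((-\infty,-\alpha(x)]\bigr) \;=\; o(G(x+\Delta)),
\]
since $G\bigl((-\infty,-\alpha(x)]\bigr)\to 0$ as $\alpha(x)\to\infty$.

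For \eqref{ifandonlyif:sub:delta:common}, I would partition the relevant integration region $(\alpha(x),x+c-\alpha(x)]$ into intervals $I_j=(y_j,y_{j+1}]$ of length $c$ with $y_j=\alpha(x)+jc$. Two uses of uniform long-tailedness of $F\in\call_\Delta$ over shifts bounded by $c$, combined with the pointwise bound $G\le c_2 F$, yield $\sup_{y\in I_j} G(x+\Delta-y)\le c_2(1+o(1))F(x-y_{j+1}+\Delta)$ and $F(x-y_{j+1}+\Delta)F(I_j)\le (1+o(1))\int_{I_j}F(x+\Delta-y)F(dy)$, uniformly in $j$. Together with $G(I_j)\le c_2 F(I_j)$, summing over $j$ gives
\[
\int_{(\alpha(x),\,x+c-\alpha(x)]} G(x+\Delta-y)\,G(dy) \;\le\; c_2^2(1+o(1))\int_{(\alpha(x),\,x+c-\alpha(x)]} F(x+\Delta-y)\,F(dy),
\]
and the right-hand side is $o(F(x+\Delta))=o(G(x+\Delta))$ because \eqref{ifandonlyif:sub:delta:common} holds for $F$. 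Lemma \ref{lem:ifandonlyif:sub:delta} then delivers $G\in\cals_\Delta$, and the ``$\sim$'' statement is an immediate special case of $\asymp$.

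For the case where $G$ is supported on $\R_+$, condition \eqref{ifandonlyif:sub:delta:real} is vacuous, and the discretization argument above invokes only the pointwise bound $G\le c_2 F$ and the $\call_\Delta$ property of $F$ — the $\cald_\Delta$ hypothesis on $F$ is never used — so it may be dropped. The main delicate point in the proof is to guarantee uniformity in $j$ of the long-tailed approximations: this works because both $\min_j(x-y_{j+1})\ge \alpha(x)-c$ and $\min_j y_j=\alpha(x)$ tend to infinity with $x$, placing every interval $I_j$ simultaneously in the tail regime where the standard uniform form of the $\call$ property over shifts bounded by $c$ applies.
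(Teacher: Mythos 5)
Your proof follows essentially the same route as the paper's: it reduces to verifying the criterion of Lemma~\ref{lem:ifandonlyif:sub:delta} for $G$, disposes of the left-tail condition via the upper half of $\asymp$ together with $F\in\cald_\Delta$, and handles the ``both large'' condition by the interval-discretization argument — the very argument the paper invokes by citing \cite[Proposition~4.22]{Foss:Korshunov:Zachary:2013} without writing it out. Your treatment of the $\R_+$ case (observing that the discretization never uses $\cald_\Delta$ and the left-tail term is vacuous) reaches the same conclusion the paper gets by passing to $F_+$ and citing the one-sided FKZ theorem.
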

\begin{proposition}[convolution]
\label{prop:asympt:equiv2}
Suppose that $F \in \cals_{\Delta}\cap \cald_{\Delta}$. 
Let $G_1,G_2$ be two distributions such that $G_1(x+\Delta)/F(x+\Delta) \to c_1$ and $G_2(x+\Delta)/F(x+\Delta) \to c_2$ as $x \to \infty$
for some constants $ c_1, c_2 \geq 0$. Then
\begin{equation}
\label{aymp:eqiv:doubleG}
 \frac{G_1*G_2 (x+\Delta)}{F(x+\Delta)} \to c_1+ c_2\quad \text{as}\quad x \to \infty.
\end{equation}
Further, if $c_1+ c_2>0$
then $G_1*G_2 \in \cals_{\Delta}$.
\end{proposition}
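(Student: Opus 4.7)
The plan is to establish the asymptotic \eqref{aymp:eqiv:doubleG} first, after which the ``$G_1*G_2 \in \cals_{\Delta}$'' conclusion follows immediately: the asymptotic together with $F \in \call_\Delta$ forces $G_1*G_2 \in \call_\Delta$, and Proposition \ref{prop:asympt:equiv} applied with $F \in \cals_\Delta \cap \cald_\Delta$ then yields $G_1*G_2 \in \cals_\Delta$ whenever $c_1+c_2 > 0$.

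For the asymptotic itself I would treat three cases in sequence, bootstrapping from the nicest to the most degenerate. \emph{Case 1: $c_1,c_2 > 0$.} By Proposition \ref{prop:asympt:equiv} combined with Lemma \ref{lem:ald:symp:equiv}, each $G_i$ lies in $\cals_\Delta \cap \cald_\Delta$. Introduce the mixture $G := (G_1+G_2)/2$, which satisfies $G(x+\Delta) \sim \tfrac{c_1+c_2}{2} F(x+\Delta)$ and hence also belongs to $\cals_\Delta$ by the same argument. Expanding $4\,G*G = G_1*G_1 + 2\,G_1*G_2 + G_2*G_2$ and using $G_i*G_i(x+\Delta) \sim 2c_i F(x+\Delta)$ (from $G_i \in \cals_\Delta$) together with $G*G(x+\Delta) \sim (c_1+c_2)F(x+\Delta)$, solve algebraically for $(G_1*G_2)(x+\Delta) \sim (c_1+c_2)F(x+\Delta)$.

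\emph{Case 2: $c_2 = 0 < c_1$} (the symmetric case is identical by commutativity of convolution). Perturb to the probability distribution $G_2^\epsilon := (1-\epsilon)G_2 + \epsilon F$, so that $G_2^\epsilon(x+\Delta) \sim \epsilon F(x+\Delta)$. Case 1 applied to the pairs $(G_1,G_2^\epsilon)$ and $(G_1,F)$ delivers $(G_1*G_2^\epsilon)(x+\Delta) \sim (c_1+\epsilon)F(x+\Delta)$ and $(G_1*F)(x+\Delta) \sim (c_1+1)F(x+\Delta)$. From the bilinear identity $G_1*G_2^\epsilon = (1-\epsilon)G_1*G_2 + \epsilon\, G_1*F$, dividing by $F(x+\Delta)$ and passing to the limit forces $(G_1*G_2)(x+\Delta)/F(x+\Delta) \to c_1$, since every other ratio in the identity converges. \emph{Case 3: $c_1=c_2=0$.} Perturb both $G_i$ analogously; Case 1 yields the asymptotic of $G_1^\epsilon*G_2^\epsilon$, Case 2 that of each $G_i*F$, and $F*F \sim 2F$ by $F \in \cals_\Delta$; the bilinear expansion then gives $(G_1*G_2)(x+\Delta)/F(x+\Delta) \to 0$.

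The main obstacle lies in the degenerate cases $c_i = 0$: there $G_i$ need not be long-tailed, so Proposition \ref{prop:asympt:equiv} cannot directly place $G_i$ in $\cals_\Delta$, and the averaging trick of Case 1 is unavailable. The perturbation $G_i^\epsilon = (1-\epsilon)G_i + \epsilon F$ is the key device, injecting just enough ``$F$-behaviour'' into $G_i$ to trigger the nondegenerate Case 1, while the bilinearity of convolution allows one to recover the original $G_1*G_2$ from $G_1^\epsilon*G_2^\epsilon$ and the simpler cross terms. The algebra must be done carefully so that existence of the limit of $(G_1*G_2)(x+\Delta)/F(x+\Delta)$ is deduced from, rather than tacitly assumed in, the identity.
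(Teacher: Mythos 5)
Your argument is correct, but it follows a genuinely different route from the paper's. The paper proves \eqref{aymp:eqiv:doubleG} directly: for $\zeta_i\sim G_i$ independent, it partitions $\P(\zeta_1+\zeta_2\in x+\Delta)$ according to where $\zeta_1,\zeta_2$ lie relative to an insensitive threshold $\alpha(x)$, obtains the contributions $c_1$ and $c_2$ from the two ``middle'' pieces by dominated convergence, and kills the remaining tail pieces using $F\in\cald_\Delta$ and the argument of Lemma \ref{lem:ifandonlyif:sub:delta} (equivalently \cite[Prop. 4.22]{Foss:Korshunov:Zachary:2013}). You instead bootstrap from Proposition \ref{prop:asympt:equiv}: when $c_1,c_2>0$ you polarize, solving the bilinear identity $4G*G=G_1^{*2}+2G_1*G_2+G_2^{*2}$ with $G=(G_1+G_2)/2$ for the cross-term, and you recover the degenerate cases $c_i=0$ by perturbing $G_i$ to $(1-\epsilon)G_i+\epsilon F$ and invoking bilinearity again. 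Both approaches are sound and comparable in total effort. The paper's direct decomposition is self-contained and handles $c_i=0$ without any device (the argument never needs $G_i$ to be long-tailed). Your version outsources all the probabilistic work to Proposition \ref{prop:asympt:equiv} — itself proved by a decomposition of the same kind — and therefore needs the $\epsilon$-perturbation purely to re-enter the regime where that proposition is applicable; the payoff is that the reduction is modular and the polarization trick (extracting cross-convolutions from self-convolutions) is a reusable observation, though it yields no extra generality here. One minor remark: your Case 1 does not actually need $G_i\in\cald_\Delta$ (only $G_i\in\cals_\Delta$, to get $G_i^{*2}(x+\Delta)\sim 2G_i(x+\Delta)$), so the appeal to Lemma \ref{lem:ald:symp:equiv} is unnecessary, though harmless.
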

Following corollary is immediately derived by Proposition \ref{prop:asympt:equiv2}. 
\begin{proposition}
\label{prop:asympt:equiv3}
Suppose that $F \in \cals_{\Delta}\cap \cald_\Delta$. 
Let $G$ be a distribution such that $G(x+\Delta)/F(x+\Delta) \to c \ge 0$ as $x \to \infty$. 
Then, for any $n \ge 2$, $G^{*n}(x+\Delta)/F(x+\Delta) \to nc \ge 0$ as $x \to \infty$. If $c >0$, then $G^{*n} \in \cals_{\Delta}$.
\end{proposition}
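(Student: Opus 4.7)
The plan is to prove Proposition \ref{prop:asympt:equiv3} by induction on $n$, using Proposition \ref{prop:asympt:equiv2} as the engine at each step. Throughout, $F \in \cals_\Delta \cap \cald_\Delta$ is fixed, so the hypotheses of Proposition \ref{prop:asympt:equiv2} on $F$ remain available in every iteration.

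For the base case $n=2$, I would apply Proposition \ref{prop:asympt:equiv2} with $G_1 = G_2 = G$ and $c_1 = c_2 = c$. The assumption $G(x+\Delta)/F(x+\Delta) \to c$ is exactly what is needed, and \eqref{aymp:eqiv:doubleG} yields
\[
\frac{G^{*2}(x+\Delta)}{F(x+\Delta)} \to 2c \qquad\text{as}\quad x\to\infty.
\]
When $c>0$ we have $2c>0$, so the second clause of Proposition \ref{prop:asympt:equiv2} gives $G^{*2}\in\cals_\Delta$.

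For the inductive step, suppose the conclusion holds for some $n\ge 2$, i.e.\ $G^{*n}(x+\Delta)/F(x+\Delta) \to nc$. Then I would apply Proposition \ref{prop:asympt:equiv2} a second time with $G_1 := G^{*n}$ (so $c_1 = nc$) and $G_2 := G$ (so $c_2 = c$). Since $G^{*n}*G = G^{*(n+1)}$, the limit \eqref{aymp:eqiv:doubleG} gives
\[
\frac{G^{*(n+1)}(x+\Delta)}{F(x+\Delta)} \to nc + c = (n+1)c,
\]
and when $c>0$ the sum $(n+1)c$ is strictly positive, so $G^{*(n+1)}\in\cals_\Delta$ again by Proposition \ref{prop:asympt:equiv2}.

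There is essentially no obstacle here: the statement is labelled as an immediate corollary, and the only point to check is that Proposition \ref{prop:asympt:equiv2} can be reapplied with $G^{*n}$ playing the role of $G_1$. This is legitimate because Proposition \ref{prop:asympt:equiv2} places no structural hypothesis on $G_1$ or $G_2$ beyond the existence of the tail-ratio limits with respect to the reference distribution $F$, and the inductive hypothesis supplies exactly such a limit for $G^{*n}$.
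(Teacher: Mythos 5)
Your proof is correct and takes exactly the route the paper intends: the paper states that Proposition \ref{prop:asympt:equiv3} is ``immediately derived by Proposition \ref{prop:asympt:equiv2},'' and your induction on $n$, applying Proposition \ref{prop:asympt:equiv2} first with $G_1=G_2=G$ and then with $G_1=G^{*n}$, $G_2=G$, is the standard way to make that derivation explicit.
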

\begin{proposition}[Kesten bound]
\label{prop:kesten}
Suppose that $F \in \cals_{\Delta} \cap \cald_\Delta$. Then, for any $\vep >0$, there exists  $x_0=x_0(\vep) >0 $ 
and $c(\vep)>0 $ such that, for any $x > x_0$ and for any $n \ge 1$,
$$F^{*n}(x+\Delta)\le c(\vep)(1+\vep)^nF(x+\Delta).$$
\end{proposition}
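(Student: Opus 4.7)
The plan is to proceed by induction on $n$, following the classical Kesten bound argument adapted to the two-sided $\Delta$-subexponential setting. Set
\[
M_n := \sup_{x \ge x_0} \frac{F^{*n}(x+\Delta)}{F(x+\Delta)},
\]
where $x_0$ is chosen large enough that the $\cald_\Delta$ bound is active and $F^{*2}(x+\Delta) \le (2+\vep)F(x+\Delta)$ for $x \ge x_0$. The goal is to derive a recursion of the form $M_{n+1} \le (1+\vep) M_n + c(\vep)$, which iterates to $M_n \le c'(\vep)(1+2\vep)^n$; by relabeling $\vep$, this gives the desired bound.

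First I would verify $M_n < \infty$ for each fixed $n$. This follows by an elementary induction: a direct convolution computation (splitting the integral at $y = x/2$ and using $\cald_\Delta$ on $F$ on both halves) propagates the almost-decreasing bound from $F$ to each $F^{*n}$, hence $F^{*n} \in \cald_\Delta$, and consequently $F^{*n}(x+\Delta)/F(x+\Delta)$ is locally bounded and uniformly bounded for $x \ge x_0$. Next, I would invoke Lemma~\ref{lem:ifandonlyif:sub:delta} to select a function $\alpha(x)\to\infty$ with $\alpha(x)<x/2$ such that $F(x+\Delta)$ is $\alpha$-insensitive, the far-left tail integral and the ``double-large'' contribution are both $o(F(x+\Delta))$.

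For the inductive step, I would write
\[
F^{*(n+1)}(x+\Delta) = \int_{\R} F^{*n}(x-y+\Delta)\,F(dy)
\]
and decompose the domain of integration into four regions: $y\le -\alpha(x)$, $-\alpha(x)<y\le \alpha(x)$, $\alpha(x)<y\le x-\alpha(x)$, and $y>x-\alpha(x)$. On the central region, $\alpha$-insensitivity together with $F^{*n}\in\cald_\Delta$ yields at most $(1+\vep) M_n F(x+\Delta)$. On the right-most region, I would reparameterise via $z=x-y$ and use the characterisation in Lemma~\ref{lem:ifandonlyif:sub:delta} applied with the roles of $F$ and $F^{*n}$ interchanged to obtain an $o(F(x+\Delta))$ contribution. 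The overlap region $(\alpha(x), x-\alpha(x)]$ is controlled by the double-tail estimate \eqref{ifandonlyif:sub:delta:common} together with the induction hypothesis, producing a $c(\vep)F(x+\Delta)$ term. Finally, the far-left region is bounded via the almost-decreasing lift of $\cald_\Delta$ to $F^{*n}$ combined with the tail condition \eqref{ifandonlyif:sub:delta:real}, giving another $o(M_n F(x+\Delta))$ term.

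The main obstacle is controlling the far-left region $\{y\le -\alpha(x)\}$: here $F^{*n}(x-y+\Delta)$ is evaluated at arguments much larger than $x$, so one must carefully propagate the almost-decreasing property through the convolution and combine it with the $\cals_\Delta$ tail estimate from Lemma~\ref{lem:ifandonlyif:sub:delta} in order to keep the multiplicative constant on the leading $M_n$ term arbitrarily close to $1+\vep$. The bookkeeping of the four regions so that the residual constant $c(\vep)$ does not depend on $n$ is the essential subtlety; once this is achieved, the recursion unravels to the asserted geometric bound.
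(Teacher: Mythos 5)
Your recursion $M_{n+1}\le(1+\vep)M_n+c(\vep)$ is exactly the paper's $A_n\le(1+\vep/2)A_{n-1}+L$, but the four-region decomposition you propose does not close: the right-most region $\{y>x-\alpha(x)\}$ is genuinely problematic. There $x-y<\alpha(x)$, so $F^{*n}(x-y+\Delta)$ sits at small (possibly negative) arguments and the only obvious upper bound for the integral is the $F$-mass of the region, $\ov F(x-\alpha(x))$, which in general is \emph{not} $O(F(x+\Delta))$ --- for a Pareto-type tail, $\ov F(x)/F(x+\Delta)\to\infty$. Your proposed fix, ``reparameterise via $z=x-y$ and apply Lemma~\ref{lem:ifandonlyif:sub:delta} with the roles of $F$ and $F^{*n}$ interchanged,'' is not licensed: that lemma concerns two i.i.d.\ copies of a \emph{single} distribution, not a mixed pair $F$ and $F^{*n}$, and it does not yield the $o(F(x+\Delta))$ you assert. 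Your auxiliary claim that $F^{*n}\in\cald_\Delta$ with usable constants is also fragile: via Lemma~\ref{lem:ald:symp:equiv} and $F^{*n}(x+\Delta)\sim nF(x+\Delta)$ the al.d.\ constant does transfer, but the onset $x_0$ of that equivalence depends on $n$, so it cannot be invoked uniformly inside an induction whose very purpose is to control the $n$-dependence. You flag this as ``the essential subtlety'' but do not resolve it.

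The paper avoids both difficulties by splitting the last increment $\xi_n$ at a \emph{fixed} threshold $x_0$ rather than at the moving threshold $\alpha(x)$, and by proving up front --- via the manipulation $\P(B,\xi_2\le x-C)\le\P(B,\xi_2\le C)+\P(B,\xi_1>C,\xi_2>C)$ together with $\cals_\Delta$ --- the single estimate $\P(\xi_1+\xi_2\in x+\Delta,\ \xi_2\le x-x_0)\le(1+\vep/2)F(x+\Delta)$ for $x>x_0$. This estimate directly gives $P_1(x)\le A_{n-1}(1+\vep/2)F(x+\Delta)$ in one stroke, replacing your far-left, central and overlap regions and requiring no $\cald_\Delta$ property of convolution powers. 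For the complementary piece $\{\xi_n>x-x_0\}$, the event $S_n\in x+\Delta$ forces $S_{n-1}<x_0+c$, a \emph{fixed} bounded set, so $P_2(x)\le\sup_{t\le x_0+c}F(x-t+\Delta)$; this sup is bounded by $LF(x+\Delta)$ using $\call_\Delta\cap\cald_\Delta$ of $F$ alone, with $L$ independent of $n$ --- precisely the $n$-uniform additive constant that a threshold $\alpha(x)\to\infty$ cannot deliver. If you wish to keep a region-by-region decomposition, you must at minimum replace the right-most threshold $\alpha(x)$ by a fixed $x_0$ and bound that piece as the paper does.
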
 

The last two results are about the closedness under factorization 
when a negligible distribution has a point mass at the origin.
The negligible distribution includes a compound Poisson distribution.
\begin{proposition}[factrization]
\label{prop:decomp:delta}
Let $H=G*F\in \cals_{\Delta}$ such that $G(dx)= p_G\delta_0(dx) +(1-p_G)G_1(dx)$ with 
$p_G \in (2^{-1},1)$ where $G_1$ is a distribution. Suppose that $H\in \cald_\Delta$ or $[\,F,G,H$ are distributions on $\R_+\,]$. 
Then, $G(x+\Delta)=o(H(x+\Delta))$ implies $ F\in \cals_{\Delta}$, 
and indeed $H(x+\Delta) \sim F(x+\Delta).$
\end{proposition}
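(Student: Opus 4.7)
The plan is to prove $F(x+\Delta)\sim H(x+\Delta)$; combined with $H\in\call_\Delta$ (inherited from $H\in\cals_\Delta$) this forces $F\in\call_\Delta$ and $F(x+\Delta)\asymp H(x+\Delta)$, so Proposition~\ref{prop:asympt:equiv} applied to $H$ (with $H\in\cald_\Delta$ in the two-sided case, or with the $\cald_\Delta$-free addendum when $F,G,H$ live on $\R_+$) delivers $F\in\cals_\Delta$ at once, together with the asserted equivalence $H(x+\Delta)\sim F(x+\Delta)$.

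The atom at the origin gives the pointwise identity
\[
H(x+\Delta)=p_G F(x+\Delta)+(1-p_G)(G_1*F)(x+\Delta),
\]
hence the trivial upper bound $F(x+\Delta)\le p_G^{-1}H(x+\Delta)$. For a matching lower asymptotic I would bootstrap the ratio $q(x):=F(x+\Delta)/H(x+\Delta)$. Since $G(x+\Delta)=(1-p_G)G_1(x+\Delta)$ for $x>0$, the hypothesis $G(x+\Delta)=o(H(x+\Delta))$ is exactly $G_1(x+\Delta)=o(H(x+\Delta))$, and then Proposition~\ref{prop:asympt:equiv2} applied to $G_1$ and $H$ yields $(G_1*H)(x+\Delta)\sim H(x+\Delta)$. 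Given inductive bounds $\ell_n\le q(x)\le L_n$ valid for all $x$ beyond a threshold $x_n$, I would split $(G_1*F)(x+\Delta)=\int F(x+\Delta-y)G_1(dy)$ into a bulk piece on $\{y<x-\alpha(x)\}$, on which the pointwise inequalities $\ell_n H(x+\Delta-y)\le F(x+\Delta-y)\le L_n H(x+\Delta-y)$ hold, and a boundary piece on $\{y\ge x-\alpha(x)\}$ controlled via $F\le p_G^{-1}H$ and the $\alpha$-insensitivity of $G_1*H$ already developed for Proposition~\ref{prop:asympt:equiv2}. This delivers
\[
(G_1*F)(x+\Delta)/H(x+\Delta)\in[\ell_n-o(1),\,L_n+o(1)].
\]

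Plugging these back into the decomposition of $H(x+\Delta)$ produces the refined bounds
\[
\ell_{n+1}:=\frac{1-(1-p_G)L_n}{p_G}\le q(x)\le \frac{1-(1-p_G)\ell_n}{p_G}=:L_{n+1},\qquad \ell_0=0,\ L_0=p_G^{-1},
\]
for $x$ beyond a new (larger) threshold $x_{n+1}$. With $r:=(1-p_G)/p_G$, which is strictly less than $1$ precisely because $p_G>1/2$, one reads off $L_{n+1}-\ell_{n+1}=r(L_n-\ell_n)$ and $(L_{n+1}+\ell_{n+1})-2=-r((L_n+\ell_n)-2)$, so both sequences converge geometrically to the unique fixed point $\ell=L=1$. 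Given any $\vep>0$, choosing $n$ with $[\ell_n,L_n]\subset(1-\vep,1+\vep)$ and then $x>x_n$ shows $q(x)\to 1$, i.e.\ $F(x+\Delta)\sim H(x+\Delta)$. The main obstacle will be the careful execution of the boundary-piece estimate: one must select a single $\alpha(x)\to\infty$ that is simultaneously $H$-insensitive and makes $G_1((x-\alpha(x),\infty))$ asymptotically negligible against $H(x+\Delta)$, which is exactly the technical ingredient already embedded in the proof of Proposition~\ref{prop:asympt:equiv2}; the hypothesis $p_G>1/2$ enters decisively by ensuring $r<1$, so that the bootstrap contracts.
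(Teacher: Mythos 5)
Your proposal is correct and takes essentially the same route as the paper: both proofs exploit the atom at the origin through the identity $H(x+\Delta)=p_GF(x+\Delta)+(1-p_G)(G_1*F)(x+\Delta)$, control the tail pieces of the convolution integral via insensitivity, the $\cald_\Delta$ bound (or the Fubini device on $\R_+$), and $G_1(x+\Delta)=o(H(x+\Delta))$, and then use $p_G>1/2$ to create a contraction of ratio $(1-p_G)/p_G<1$ that forces $F(x+\Delta)/H(x+\Delta)\to 1$, after which Proposition~\ref{prop:asympt:equiv} finishes. The paper collapses your bootstrap into a single pass by setting $\ov C:=\limsup F(x+\Delta)/H(x+\Delta)$, $\underline C:=\liminf$, deriving $p_G\ov C+(1-p_G)\underline C\le 1\le p_G\underline C+(1-p_G)\ov C$ directly and reading off $\ov C=\underline C=1$ --- the fixed point of your iteration --- which avoids the bookkeeping of propagating the $o(1)$ errors through infinitely many steps.
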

\begin{proposition}[factrization: compound Poisson]
\label{prop:decomp:delta:Poisson}
Let $H=G*F\in \cals_{\Delta}$ such that $G$ is a compound Poisson distribution. 
Suppose that $H\in \cald_{\Delta}$ or $[\,F,G,H$ are distributions on $\R_+\,]$. 
Then, $G(x+\Delta)=o(H(x+\Delta))$ implies $ F\in \cals_{\Delta}$, and indeed $H(x+\Delta) \sim F(x+\Delta).$
\end{proposition}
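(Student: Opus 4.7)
The plan is to reduce Proposition \ref{prop:decomp:delta:Poisson} to Proposition \ref{prop:decomp:delta} by exploiting the infinite divisibility of $G$ within the compound Poisson class. Write $G$ as compound Poisson with intensity $\lambda>0$ and jump distribution $\phi$, and factorise $G=G_1^{\ast N}$ for a positive integer $N$, where $G_1$ is compound Poisson with intensity $\lambda/N$ and the same $\phi$. The atom of $G_1$ at $\{0\}$ equals $\exp(-(\lambda/N)(1-\phi(\{0\})))\ge e^{-\lambda/N}$, so choosing $N>\lambda/\log 2$ guarantees $p_{G_1}:=G_1(\{0\})\in(1/2,1)$, which is exactly the form required by Proposition \ref{prop:decomp:delta}.

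Next I would establish the tail comparison that for $x>0$,
\[
 G_1(x+\Delta)\le \frac{e^{\lambda}}{N}\, G(x+\Delta).
\]
This follows termwise from the Poisson series $G(x+\Delta)=e^{-\lambda}\sum_{n\ge1}\lambda^n\phi^{\ast n}(x+\Delta)/n!$ and the analogous expansion for $G_1$, since $(\lambda/N)^n=N^{-n}\lambda^n\le N^{-1}\lambda^n$ for $n\ge1$, after absorbing the prefactor ratio into $e^{\lambda-\lambda/N}\le e^\lambda$. In particular the hypothesis $G(x+\Delta)=o(H(x+\Delta))$ yields $G_1(x+\Delta)=o(H(x+\Delta))$. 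The core of the argument is then a downward induction on $k=N,N-1,\dots,0$ for the assertion
\[
 H_k:=G_1^{\ast k}\ast F \in \cals_\Delta \quad\text{and}\quad H_k(x+\Delta)\sim H(x+\Delta).
\]
The base case $k=N$ is trivial since $H_N=H$. For the inductive step I write $H_k=G_1\ast H_{k-1}$ and apply Proposition \ref{prop:decomp:delta} with the correspondence $G\leftrightarrow G_1$, $F\leftrightarrow H_{k-1}$, $H\leftrightarrow H_k$: the atom condition is by construction; $G_1(x+\Delta)=o(H_k(x+\Delta))$ follows from the tail comparison together with the inductive asymptotic $H_k(x+\Delta)\sim H(x+\Delta)$; and either we are in the $\R_+$ case so no further hypothesis is needed, or $H\in\cald_\Delta$, in which case Lemma \ref{lem:ald:symp:equiv} transfers al.d.\ across the asymptotic equivalence to give $H_k\in\cald_\Delta$. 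Proposition \ref{prop:decomp:delta} then yields $H_{k-1}\in\cals_\Delta$ together with $H_{k-1}(x+\Delta)\sim H_k(x+\Delta)\sim H(x+\Delta)$, closing the induction. Setting $k=0$ produces $F=H_0\in\cals_\Delta$ and $F(x+\Delta)\sim H(x+\Delta)$.

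I expect the main obstacle to be organisational rather than analytic: ensuring the atom condition $p_{G_1}>1/2$ holds regardless of whether $\phi$ itself charges $\{0\}$ (handled uniformly by the lower bound $e^{-\lambda/N}$), and verifying that the $\cald_\Delta$ hypothesis of Proposition \ref{prop:decomp:delta} propagates faithfully through the $N$ steps of the induction (handled by Lemma \ref{lem:ald:symp:equiv}). No genuinely new analytic input is needed beyond the termwise Poisson bound and what is already absorbed in Proposition \ref{prop:decomp:delta}.
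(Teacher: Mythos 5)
Your proposal is correct and follows essentially the same route as the paper's proof: both factor the compound Poisson $G$ into $N$-fold roots $G_1$ with a large enough atom at the origin ($N>\lambda/\log 2$ so $p_{G_1}>1/2$), establish $G_1(x+\Delta)=O(G(x+\Delta))=o(H(x+\Delta))$, and then peel off one $G_1$ factor at a time via Proposition \ref{prop:decomp:delta}, using Lemma \ref{lem:ald:symp:equiv} to carry the $\cald_\Delta$ hypothesis through. Your presentation as an explicit downward induction on $k$, and the termwise Poisson-series tail bound with the lower bound $p_{G_1}\ge e^{-\lambda/N}$ (which cleanly handles $\phi(\{0\})>0$), are minor cosmetic improvements over the paper's iterative wording and single-binomial-term bound, but the argument is the same.
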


\section{Main results}
\label{sec:main}
We investigate $\cals_\Delta$ and $\cals_{loc}$ separately in the 
compound Poisson case (Section \ref{compund:poisson}) and in 
the infinitely divisible case (Section \ref{sec:id}). 
\subsection{Compound Poisson case}
\label{compund:poisson}
Let $G$ be a distribution on $\R$. For $\lambda>0$ define the compound Poisson probability measure by 
\begin{align}
\label{def:cp:measure}
  \mu(dx)= e^{-\lambda}\sum_{n=0}^\infty (\lambda^n/n!)G^{\ast n}(dx), 
\end{align}
where $G^{\ast 0}$ is the Dirac measure. 
\begin{theorem}
\label{thm:cp:twosided}
Let $\mu$ be a compound Poisson given by \eqref{def:cp:measure}. 
 The following are equivalent.\\
$(\mathrm{i})$ $\mu \in \cals_\Delta \cap \cald_\Delta$, \quad $(\mathrm{ii})$ $G \in \cals_\Delta \cap \cald_\Delta$, \quad $(\mathrm{iii})$ $G \in \call_\Delta \cap \cald_\Delta$ and $\mu(x+\Delta)/G(x+\Delta)\sim \lambda$.\\
If additionally $G$ is a distribution on $\R_+$, then the following assertions are equivalent. \\
$(\mathrm{i})$ $\mu \in \cals_\Delta$, \quad $(\mathrm{ii})$ $G \in \cals_\Delta$,
\quad $(\mathrm{iii})$ $G \in \call_\Delta$ and $\mu(x+\Delta)/G(x+\Delta) \sim \lambda$.
\end{theorem}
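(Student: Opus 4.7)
The plan is to prove the two-sided equivalence through two chains, $(\mathrm{ii})\Leftrightarrow(\mathrm{iii})$ and $(\mathrm{i})\Leftrightarrow(\mathrm{ii})$, building on the propositions of Section~\ref{section:prop:sdelta:sloc}. The central object is the expansion
\[
 \frac{\mu(x+\Delta)}{G(x+\Delta)}=e^{-\lambda}\sum_{n\ge 1}\frac{\lambda^n}{n!}\frac{G^{\ast n}(x+\Delta)}{G(x+\Delta)},\qquad x>c,
\]
in which the $n=0$ atom at the origin drops out of $x+\Delta$. For $(\mathrm{ii})\Rightarrow(\mathrm{iii})$ I would invoke Kesten's bound (Proposition~\ref{prop:kesten}) to obtain $G^{\ast n}(x+\Delta)\le c(\vep)(1+\vep)^n G(x+\Delta)$; for small $\vep$ the majorant $c(\vep)e^{\lambda(1+\vep)}$ is finite, so dominated convergence combined with the termwise limit $G^{\ast n}(x+\Delta)/G(x+\Delta)\to n$ from Proposition~\ref{prop:asympt:equiv3} delivers $\mu(x+\Delta)/G(x+\Delta)\to \lambda$. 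For the converse $(\mathrm{iii})\Rightarrow(\mathrm{ii})$ I apply Fatou to the same series, using the classical one-big-jump lower bound $\liminf G^{\ast n}(x+\Delta)/G(x+\Delta)\ge n$ (available under $G\in\call_\Delta$): since $e^{-\lambda}\sum_{n\ge 1}n\lambda^n/n!=\lambda$ already matches the prescribed total, equality is forced in every summand, so in particular $G^{\ast 2}(x+\Delta)\sim 2G(x+\Delta)$ and hence $G\in\cals_\Delta$. A short truncation argument controls the reverse Fatou and upgrades $\liminf$ to $\lim$.

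For $(\mathrm{ii})\Rightarrow(\mathrm{i})$, the asymptotic $\mu(x+\Delta)\sim\lambda G(x+\Delta)$ obtained in $(\mathrm{iii})$ transfers the al.d.\ property from $G$ to $\mu$ via Lemma~\ref{lem:ald:symp:equiv}, and Proposition~\ref{prop:asympt:equiv} (with $F=G$ and comparison $\mu$) upgrades the asymptotic equivalence to $\mu\in\cals_\Delta$.

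The hard direction $(\mathrm{i})\Rightarrow(\mathrm{ii})$ is the principal obstacle. My plan is to start from the mixture identity
\[
 \mu = e^{-\lambda}\delta_0 + (1-e^{-\lambda})\,G\ast\tilde\mu_\lambda,\qquad \tilde\mu_\lambda := \frac{\lambda e^{-\lambda}}{1-e^{-\lambda}}\sum_{n\ge 0}\frac{\lambda^n}{(n+1)!}G^{\ast n},
\]
which gives $\mu(x+\Delta)=(1-e^{-\lambda})(G\ast\tilde\mu_\lambda)(x+\Delta)$ for $x>c$. Combined with the free lower bound $\mu(x+\Delta)\ge\lambda e^{-\lambda}G(x+\Delta)$, once the two-sided comparison $G(x+\Delta)\asymp\mu(x+\Delta)$ is established, Lemma~\ref{lem:ald:symp:equiv} yields $G\in\cald_\Delta$, and the $\call_\Delta$-property of $\mu$ passes to $G$; the Fatou argument of $(\mathrm{iii})\Rightarrow(\mathrm{ii})$ then extracts $G\in\cals_\Delta$. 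The crux is the upper comparison $\mu(x+\Delta)\le C\,G(x+\Delta)$: since Proposition~\ref{prop:decomp:delta:Poisson} does not apply directly (because $\tilde\mu_\lambda$ is not itself compound Poisson) and Proposition~\ref{prop:factrization:delta} does not apply either (because $\tilde\mu_\lambda(x+\Delta)$ is generically of the same order as $\mu(x+\Delta)$, not $o(\cdot)$), I expect to invoke Lemma~\ref{lem:ifandonlyif:sub:delta} on the compound Poisson $\mu$: its large-positive and large-negative integral conditions together control the contributions of the $n\ge 2$ terms in the Poisson series and pin $\mu(x+\Delta)$ from above by a multiple of $G(x+\Delta)$.

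For the positive-half case every appearance of $\cald_\Delta$ above becomes unnecessary, because the $\R_+$-branches of Proposition~\ref{prop:asympt:equiv} and Proposition~\ref{prop:decomp:delta:Poisson}, and the corresponding one-sided Kesten-type estimate, all operate without the al.d.\ hypothesis; the same four-step chain then collapses into the cleaner three-equivalence statement asserted in the theorem.
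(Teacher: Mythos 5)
Your directions $(\mathrm{ii})\Rightarrow(\mathrm{iii})$ (Kesten bound plus dominated convergence), $(\mathrm{iii})\Rightarrow(\mathrm{ii})$ (Fatou with the one-big-jump lower bound forcing termwise equality), and $(\mathrm{ii})\Rightarrow(\mathrm{i})$ (transfer of al.d.\ and asymptotic equivalence) are all sound and coincide with the paper's argument. The gap is in $(\mathrm{i})\Rightarrow(\mathrm{ii})$, which you correctly identify as the hard step but for which your plan would not go through.

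Two things break. First, even if Lemma~\ref{lem:ifandonlyif:sub:delta} delivered an upper bound $\mu(x+\Delta)\le C\,G(x+\Delta)$, you would only have $G(x+\Delta)\asymp\mu(x+\Delta)$; the Fatou argument you want to re-run needs both $G\in\call_\Delta$ and the \emph{exact} limit $\mu(x+\Delta)/G(x+\Delta)\to\lambda$, and neither follows from $\asymp$ (the $\call_\Delta$ property does not pass from a convolution to a factor without more work). Second, and more fundamentally, the direct mixture $\mu=e^{-\lambda}\delta_0+(1-e^{-\lambda})\mu_{10}$ cannot be inverted by the logarithm series when $\lambda$ is not small: writing $\lambda\,G(x+\Delta)=-\sum_{n\ge1}n^{-1}(1-e^{\lambda})^n\mu_{10}^{\ast n}(x+\Delta)$, the factor $|1-e^{\lambda}|^n=(e^{\lambda}-1)^n$ overwhelms the $(1+\vep)^n$ from Kesten's bound whenever $\lambda\ge\log2$, so dominated convergence fails. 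The paper resolves this by first truncating the L\'evy measure at a level $c_1$ chosen so that $\lambda\,\ov G(c_1)<\log2$: this splits $\mu=\mu_1\ast\mu_2$ with $\mu_2$ a compound Poisson having exponential moments (hence $\mu_2(x+\Delta)=o(e^{-\gamma x})$, so Proposition~\ref{prop:decomp:delta:Poisson} \emph{does} apply with $\mu_2$ as the negligible factor and yields $\mu_1\in\cals_\Delta\cap\cald_\Delta$ and $\mu(x+\Delta)\sim\mu_1(x+\Delta)$), and with $\mu_1$ a compound Poisson whose Poisson parameter $\lambda\,\ov G(c_1)$ is small enough that the subsequent logarithm inversion against $\mu_{10}$ admits a convergent geometric majorant. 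That truncation-then-inversion device is the missing idea; without it the argument stalls exactly where you flag that neither Proposition~\ref{prop:factrization:delta} nor Proposition~\ref{prop:decomp:delta:Poisson} applies to your mixture.
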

An immediate corollary follows from the fact $\cala_\Delta \subset \cald_\Delta$. 
\begin{corollary}
\label{col:cp:twosided:ani}
 Let $\mu$ be a compound Poisson given by \eqref{def:cp:measure}. The following are equivalent.\\
$(\mathrm{i})$ $\mu \in \cals_\Delta \cap \cala_\Delta$, \quad $(\mathrm{ii})$ $G \in \cals_\Delta \cap \cala_\Delta$, 
\quad$(\mathrm{iii})$ $G \in \call_\Delta \cap \cala_\Delta$ and $\mu(x+\Delta)/G(x+\Delta)\sim \lambda$.
\end{corollary}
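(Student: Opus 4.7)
The plan is to reduce everything to Theorem \ref{thm:cp:twosided} via the inclusion $\cala_\Delta \subset \cald_\Delta$, and then use Lemma \ref{lem:equiv:ani:function} to transfer the a.n.i.~property between $\mu$ and $G$ along the asymptotic equivalence $\mu(x+\Delta)\sim \lambda G(x+\Delta)$. The shape of the argument is almost mechanical, as indicated by the author's remark that it is ``an immediate corollary''.

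Concretely, for (i) $\Rightarrow$ (ii): since $\mu \in \cala_\Delta \subset \cald_\Delta$, Theorem \ref{thm:cp:twosided} applies and gives $G \in \cals_\Delta \cap \cald_\Delta$ together with $G(x+\Delta) \sim \mu(x+\Delta)/\lambda$. Using $\mu \in \cala_\Delta$, Lemma \ref{lem:equiv:ani:function} supplies a positive non-increasing $\beta$ with $\mu(x+\Delta) \sim \beta(x)$; multiplying by $1/\lambda$ yields $G(x+\Delta) \sim \beta(x)/\lambda$, still non-increasing. Since $G$ is a probability measure, $G(x+\Delta) \to 0$, and the converse direction of Lemma \ref{lem:equiv:ani:function} then gives $G \in \cala_\Delta$. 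For (ii) $\Rightarrow$ (iii), applying Theorem \ref{thm:cp:twosided} (using $\cala_\Delta \subset \cald_\Delta$ again) directly produces $G \in \call_\Delta$ and the asymptotic ratio $\mu(x+\Delta)/G(x+\Delta) \sim \lambda$; the $\cala_\Delta$ membership of $G$ is kept from the hypothesis. Finally for (iii) $\Rightarrow$ (i), Theorem \ref{thm:cp:twosided} applied to $G \in \call_\Delta \cap \cald_\Delta$ and the ratio condition delivers $\mu \in \cals_\Delta \cap \cald_\Delta$; then $G \in \cala_\Delta$ combined with $\mu(x+\Delta) \sim \lambda G(x+\Delta)$ and Lemma \ref{lem:equiv:ani:function} (after noting $\mu(x+\Delta) \to 0$) yields $\mu \in \cala_\Delta$.

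There is no real obstacle here, as Theorem \ref{thm:cp:twosided} does all the serious work. The only mildly delicate point is making sure to invoke both directions of Lemma \ref{lem:equiv:ani:function}: the forward direction to extract a non-increasing asymptotic majorant, and the converse (which needs the vanishing-at-infinity hypothesis) to return to the class $\cala$ on the other side of the equivalence $\mu(x+\Delta) \sim \lambda G(x+\Delta)$. Both applications are legitimate because $\mu$ and $G$ are probability measures, so their $\Delta$-increments tend to $0$. Hence the corollary follows without any additional estimate beyond those already contained in Theorem \ref{thm:cp:twosided}.
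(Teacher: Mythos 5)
Your proof is correct and proceeds exactly along the lines the paper intends when it says the corollary ``follows from the fact $\cala_\Delta \subset \cald_\Delta$'': apply Theorem \ref{thm:cp:twosided} via the inclusion, then transfer the a.n.i.\ property across the asymptotic equivalence $\mu(x+\Delta)\sim\lambda G(x+\Delta)$ using the characterization of $\cala_\Delta$ in Lemma \ref{lem:equiv:ani:function}. Since the paper gives no explicit proof, your careful accounting of both directions of Lemma \ref{lem:equiv:ani:function} (and of the vanishing of $\Delta$-increments of probability measures, which licenses the converse direction) is exactly the missing detail a reader would want.
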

In the following corollary  
we recovered the classical one-sided result of Embrechts and Hawkes \cite{Embrechts.Hawkes:1982},
 and moreover extended it to the characterization of the two-sided distribution.
\begin{corollary}
\label{cor:ineger:twoside}
Let $\Delta=(0,1]$ and let  $\mu$ be an ID on $\mathbb Z$ with L\'evy measure 
$\lambda G$ where $\lambda>0$ and $G$ is a distribution. 
 The following assertions are equivalent.\\
$(\mathrm{i})$ $\mu \in \cals_\Delta \cap \cald_\Delta$,
\quad $(\mathrm{ii})$ $G \in \cals_\Delta \cap \cald_\Delta$, 
\quad$(\mathrm{iii})$ $G \in \call_\Delta \cap \cald_\Delta$ and $\mu(x+\Delta)/G(x+\Delta)\sim \lambda$. \\
If additionally $G$ is a distribution on $\Z_+$, then the following assertions are equivalent. \\
$(\mathrm{i})$ $\mu \in \cals_\Delta$, 
\quad $(\mathrm{ii})$ $G \in \cals_\Delta$, 
\quad $(\mathrm{iii})$ $G \in \call_\Delta$ and $\mu(x+\Delta)/G(x+\Delta)\sim \lambda$.
\end{corollary}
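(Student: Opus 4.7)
The plan is to reduce the corollary to Theorem \ref{thm:cp:twosided} by realizing $\mu$ as a deterministic integer translate of the compound Poisson $\mu_0$ defined by \eqref{def:cp:measure} with jump distribution $G$ and rate $\lambda$.

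First, I would show that $\mu = \mu_0 \ast \delta_\ell$ for some $\ell \in \Z$. Since $\lambda G$ is a finite Lévy measure, the exponent in \eqref{def:chf:idr} rewrites as $\lambda(\wh G(z)-1) + i\ell z - b^2 z^2/2$ with $\ell := a - \lambda \int_{|y|\le 1} y\, G(dy)$. Because $\mu$ is supported on the lattice $\Z$, its characteristic function is $2\pi$-periodic, which forces $b=0$, forces $G$ (the normalized Lévy measure) to be concentrated on $\Z\setminus\{0\}$, and forces $\ell \in \Z$. Hence $\wh\mu(z) = \wh{\mu_0}(z)\, e^{i\ell z}$, i.e.\ $\mu = \mu_0 \ast \delta_\ell$.

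Next, I would invoke translation invariance. For any $\ell \in \R$ and any distribution $\nu$, the identity $(\nu \ast \delta_\ell)(x+\Delta) = \nu(x-\ell+\Delta)$ shows that membership in each of $\call_\Delta$, $\cals_\Delta$, $\cald_\Delta$ (and their intersections) is unchanged by convolution with $\delta_\ell$, so $\mu$ lies in any of these classes iff $\mu_0$ does. For the tail ratio, under $G \in \call_\Delta$ (which is implicit in each of (i), (ii), (iii) by Theorem \ref{thm:cp:twosided}) we have $G(x-\ell+\Delta) \sim G(x+\Delta)$, so
\[
\frac{\mu(x+\Delta)}{G(x+\Delta)} = \frac{\mu_0(x-\ell+\Delta)}{G(x+\Delta)} \sim \frac{\mu_0(y+\Delta)}{G(y+\Delta)}
\]
with $y := x - \ell \to \infty$, meaning the two ratios have the same limit; in particular one tends to $\lambda$ iff the other does.

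Theorem \ref{thm:cp:twosided} applied to $\mu_0$ and $G$ now yields the three-way equivalence (i)$\Leftrightarrow$(ii)$\Leftrightarrow$(iii) for $\mu$ and $G$. For the one-sided addendum, when $G$ is supported on $\Z_+$ so is $\mu_0$, and the positive-half part of Theorem \ref{thm:cp:twosided} gives the corresponding equivalences without the $\cald_\Delta$ hypothesis. The main obstacle is the initial identification $\mu = \mu_0 \ast \delta_\ell$ with $\ell \in \Z$, which rests on a careful reading of \eqref{def:chf:idr} in the lattice case; once this is in hand the argument collapses to the translation-invariance check just sketched.
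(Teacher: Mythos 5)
Your proposal is correct, and it is essentially the argument the paper intends but does not spell out: Corollary \ref{cor:ineger:twoside} is stated with no separate proof, being presented as an immediate consequence of Theorem \ref{thm:cp:twosided}. You supply precisely the missing reduction. The key observation you make explicit — that an ID on $\Z$ with finite L\'evy measure $\lambda G$ need not equal the compound Poisson $\mu_0$ of \eqref{def:cp:measure} on the nose, but only up to an integer drift $\delta_\ell$ — is genuine and worth recording, since the corollary's hypotheses (ID on $\Z$, L\'evy measure $\lambda G$) do not pin down the drift. Your translation-invariance check for $\call_\Delta$, $\cals_\Delta$, $\cald_\Delta$ is routine and correct (one point worth noting, which you implicitly use: $\alpha\in\call$ gives $\alpha(x-y)\sim\alpha(x)$ for fixed $y>0$ as well, by substituting $x\mapsto x-y$, so the shift by $-\ell$ is covered for both signs of $\ell$). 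Your handling of the tail-ratio clause is also careful in the right place: you first transfer class membership by translation (which needs no $\call_\Delta$ hypothesis on $G$), then extract $G\in\call_\Delta$ from whichever of (i)--(iii) is assumed before replacing $G(x+\Delta)$ with $G(x-\ell+\Delta)$ in the ratio. The one-sided addendum goes through identically from the positive-half part of Theorem \ref{thm:cp:twosided}. In short: same route as the paper, with a small but real gap in the paper's exposition filled in.
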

The results in Theorem \ref{thm:cp:twosided} are immediately extended to 
those for the local subexponentiality. Notice that the two-sided case below has not been studied before. 
\begin{corollary}
\label{coro:cp:local}
Let $\mu$ be a compound Poisson with L\'evy measure $\lambda G$ where $\lambda>0$ and $G$ is a distribution. 
 The following assertions are equivalent.\\
$(\mathrm{i})$ $\mu \in \cals_{loc} \cap \cald_{loc}$, \quad $(\mathrm{ii})$ $G \in \cals_{loc}  \cap \cald_{loc} $,  \\
$(\mathrm{iii})$ $G \in \call_{loc}  \cap \cald_{loc} $ and $\mu(x+\Delta)/G(x+\Delta)\sim \lambda$
for any $\Delta$. \\
If additionally $G$ is a distribution on $\R_+$, then the following assertions are equivalent. \\
$(\mathrm{i})$ $\mu \in \cals_{loc}$, 
\quad $(\mathrm{ii})$ $G \in \cals_{loc}$,
\quad $(\mathrm{iii})$ $G \in \call_{loc}$ and $\mu(x+\Delta)/G(x+\Delta)\sim \lambda$ for nay $\Delta$.
\end{corollary}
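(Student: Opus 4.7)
The plan is to obtain Corollary \ref{coro:cp:local} as a direct repackaging of Theorem \ref{thm:cp:twosided}, using the definitional identities
\[
\cals_{loc} = \bigcap_{c>0} \cals_{(0,c]}, \qquad \cald_{loc} = \bigcap_{c>0} \cald_{(0,c]}, \qquad \call_{loc} = \bigcap_{c>0} \call_{(0,c]},
\]
where the intersections run over all bounded intervals $\Delta = (0,c]$ with $c>0$. Because each of these local classes is, by definition, the conjunction over all $\Delta$ of the corresponding $\Delta$-class, one simply needs to verify the three-way equivalence for each fixed $\Delta$ and then intersect.

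First I would fix an arbitrary $c>0$ and set $\Delta = (0,c]$. Applying the first part of Theorem \ref{thm:cp:twosided} to this $\Delta$ gives
\[
\mu\in \cals_\Delta\cap\cald_\Delta \ \Longleftrightarrow\ G\in\cals_\Delta\cap\cald_\Delta \ \Longleftrightarrow\ \bigl[G\in\call_\Delta\cap\cald_\Delta \text{ and } \mu(x+\Delta)/G(x+\Delta)\sim\lambda\bigr].
\]
Since this equivalence holds for every admissible $\Delta$, taking the conjunction over all $c>0$ and invoking the identities displayed above immediately yields the equivalence of (i), (ii) and (iii) in the two-sided statement; note that the ``for any $\Delta$'' quantifier in clause (iii) of the corollary is exactly what is obtained when one intersects the $\Delta$-wise statement over $c>0$.

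For the one-sided assertion, the same argument is repeated using the positive-half statement of Theorem \ref{thm:cp:twosided}, which drops the hypothesis $\cald_\Delta$ throughout; intersecting over $c>0$ gives the claimed equivalence with $\call_{loc}$ in place of $\call_{loc}\cap\cald_{loc}$.

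I do not expect a genuine obstacle here: the corollary is essentially a formal consequence of the $\Delta$-version. The only point that warrants a brief remark is that the asymptotic $\mu(x+\Delta)/G(x+\Delta)\sim\lambda$ is required for each $\Delta$ separately rather than uniformly in the length $c$, which is precisely how the statement is phrased in (iii); the equivalence therefore survives the intersection without any additional uniformity argument.
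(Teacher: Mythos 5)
Your proposal is correct and coincides with the paper's (implicit) argument: the text explicitly states that "the results in Theorem \ref{thm:cp:twosided} are immediately extended to those for the local subexponentiality," and the remark following the corollary confirms that the positive-half case is "an immediate consequence of the positive part of Theorem \ref{thm:cp:twosided}." Fixing an arbitrary $\Delta=(0,c]$, invoking Theorem \ref{thm:cp:twosided}, and then intersecting over all $c>0$ via the definitional identities $\cals_{loc}=\bigcap_{c>0}\cals_{(0,c]}$, $\cald_{loc}=\bigcap_{c>0}\cald_{(0,c]}$, $\call_{loc}=\bigcap_{c>0}\call_{(0,c]}$ is exactly the intended route.
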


\begin{remark}
The positive distribution part of Corollary \ref{coro:cp:local} has two different proofs. 
One is an immediate consequence of the positive part of Theorem \ref{thm:cp:twosided}, and 
the other pass is given by a more general result \cite[Theorem 1.1]{Watanabe:Yamamuro:2010}. 
We briefly explain this. For $(\mathrm{ii}) \Leftrightarrow (\mathrm{iii})$ observe that by Proposition \ref{prop:asympt:equiv},  
$G\in \cals_{loc} \Leftrightarrow G_1 \in \cals_{loc}$ where $G_{1}$,  
given in the proof of Theorem \ref{thm:cp:twosided}, 
corresponds to the conditional L\'evy 
measure $\nu_{(1)}$ of \cite[Theorem 1.1]{Watanabe:Yamamuro:2010}. Thus $(2) \Leftrightarrow (3)$ in \cite[Theorem 1.1]{Watanabe:Yamamuro:2010} yields 
$(\mathrm{ii}) \Leftrightarrow (\mathrm{iii})$ where the second condition of $(3)$ is $\mu(x+\Delta)\sim \lambda G(x+\Delta)$ for all $\Delta$. 
The part $(\mathrm{i}) \Leftrightarrow (\mathrm{ii})$ directly follows from $(1) \Leftrightarrow (2)$ of \cite[Theorem 1.1]{Watanabe:Yamamuro:2010}
since $G\in \cals_{loc} \Leftrightarrow G_1 \in \cals_{loc}$. 
\end{remark}

\subsection{Infinitely divisible case}
\label{sec:id}
Define the normalized L\'evy measure $\nu_{(1)}$ as $\nu_{(1)}(dx)={\bf 1}_{(1,\infty)}(x)\nu(dx)/\nu((1,\infty))$.
\begin{theorem}
\label{thm:ID:delta:subexponential}
Let $\mu \in \id$. 
Under the condition $\nu_{(1)} \in  \call_\Delta $, the following  
are equivalent.\\
$(\mathrm{i})$ $\mu \in \cals_\Delta \cap \cald_\Delta$, 
\quad $(\mathrm{ii})$ $\nu_{(1)} \in \cals_\Delta \cap \cald_\Delta$,  
\quad$(\mathrm{iii})$ $\nu_{(1)} \in \call_\Delta \cap \cald_\Delta$ and $ \mu(x+\Delta)\sim \nu(x+\Delta)$. 
\end{theorem}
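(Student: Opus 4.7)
The plan is to reduce Theorem \ref{thm:ID:delta:subexponential} to the compound Poisson case of Theorem \ref{thm:cp:twosided} through the standard L\'evy--Khintchine decomposition. Write $\mu = \mu_1 \ast \mu_2$, where $\mu_1$ is compound Poisson with L\'evy measure $\nu|_{(1,\infty)} = \lambda\,\nu_{(1)}$ (so $\lambda = \nu((1,\infty))$) and $\mu_2 \in \id$ carries the remaining L\'evy measure $\nu|_{(-\infty,1]}$ together with the Gaussian part and drift. The theorem will then follow by transferring the $\cals_\Delta$ and $\cald_\Delta$ properties along two convolutions: between $\nu_{(1)}$ and $\mu_1$ via Theorem \ref{thm:cp:twosided}, and between $\mu_1$ and $\mu$ via the convolution and factorization results of Section \ref{section:prop:sdelta:sloc}.

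The crucial technical step is the estimate $\mu_2(x+\Delta) = o(\nu_{(1)}(x+\Delta))$, which combined with the Poisson-series bound $\mu_1(x+\Delta) \ge e^{-\lambda}\lambda\,\nu_{(1)}(x+\Delta)$ yields $\mu_2(x+\Delta) = o(\mu_1(x+\Delta))$. Because the L\'evy measure of $\mu_2$ is supported in $(-\infty,1]$, $\mu_2$ admits all positive exponential moments; hence by Chebyshev $\mu_2(x+\Delta) \le \mu_2([x,\infty)) = O(e^{-\theta x})$ for every $\theta > 0$. Conversely, the standing hypothesis $\nu_{(1)} \in \call_\Delta$ implies $\log\nu_{(1)}(x+\Delta) = -o(x)$: setting $L(x) := -\log\nu_{(1)}(x+\Delta)$, the relation $L(x+y) - L(x) \to 0$ at any fixed $y > 0$ together with the local boundedness of $L$ yields, by a telescoping/Ces\`aro argument, $\limsup_{x\to\infty} L(x)/x \le \epsilon/y$ for every $\epsilon,y > 0$, so $L(x)/x \to 0$. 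Therefore $\nu_{(1)}(x+\Delta)$ decays sub-exponentially while $\mu_2(x+\Delta)$ decays exponentially, proving the estimate.

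Granted this, the implication \textrm{(ii)}$\Rightarrow$\textrm{(i),(iii)} is routine: Theorem \ref{thm:cp:twosided} gives $\mu_1 \in \cals_\Delta \cap \cald_\Delta$ with $\mu_1(x+\Delta) \sim \lambda\,\nu_{(1)}(x+\Delta) = \nu(x+\Delta)$; Proposition \ref{prop:asympt:equiv2} applied to $\mu = \mu_1 \ast \mu_2$ with $c_1 = 1$, $c_2 = 0$ then yields $\mu \in \cals_\Delta$ and $\mu(x+\Delta) \sim \mu_1(x+\Delta)$, and Lemma \ref{lem:ald:symp:equiv} transfers the al.d.\ property to $\mu$. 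The converse directions \textrm{(i)}$\Rightarrow$\textrm{(iii)} and \textrm{(iii)}$\Rightarrow$\textrm{(ii)} both reduce to the asymptotic identity $\mu(x+\Delta) \sim \mu_1(x+\Delta)$, which fed back into Theorem \ref{thm:cp:twosided} closes the equivalence. To prove this identity, I would write $\mu(x+\Delta) = \int \mu_1(x-y+\Delta)\,\mu_2(dy)$, first propagate $\mu_1 \in \call_\Delta \cap \cald_\Delta$ from $\nu_{(1)} \in \call_\Delta \cap \cald_\Delta$ along the Poisson series (using Kesten-type control from Proposition \ref{prop:kesten}), and then split the integral at $|y| \le h(x)$ for an insensitivity function $h(x) \to \infty$ chosen slowly enough to preserve $\mu_1(x-y+\Delta) \sim \mu_1(x+\Delta)$ uniformly, but fast enough so that $\mu_2(\{|y|>h(x)\}) = o(\mu_1(x+\Delta))$. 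The $y > h(x)$ contribution is controlled by the exponential tail of $\mu_2$ against the sub-exponential tail of $\mu_1$, and the $y < -h(x)$ contribution by the al.d.\ bound $\mu_1(x+|y|+\Delta) \le K\mu_1(x+\Delta)$ together with $\mu_2((-\infty,-L]) \to 0$ as $L \to \infty$.

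The main obstacle is precisely this convolution identity in the directions not covered by Proposition \ref{prop:asympt:equiv2}: none of the factorization propositions of Section \ref{section:prop:sdelta:sloc} applies directly, since $\mu_2$ is neither compound Poisson nor has a point mass at $0$ of mass exceeding $1/2$, and we do not assume $\mu_1 \in \cala_\Delta$. Hence the delicate part is the simultaneous propagation of $\call_\Delta$ and $\cald_\Delta$ from $\nu_{(1)}$ to $\mu_1$ along the Poisson series, and the calibration of the splitting function $h(x)$ so that both tails of $\mu_2$ are genuinely negligible relative to $\mu_1(x+\Delta)$.
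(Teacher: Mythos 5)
Your proposal uses a genuinely different decomposition from the paper, and this difference is precisely where the gap you flag at the end becomes fatal. You take $\mu_1$ to be the compound Poisson piece with L\'evy measure $\nu|_{(1,\infty)}=\lambda\nu_{(1)}$ and dump everything else (Gaussian, drift, $\nu|_{(-\infty,1]}$) into $\mu_2$. The paper does the opposite: it sets $\wh\mu_2(z)=\exp\bigl(\int_{(-\infty,-1)}(e^{izx}-1)\nu(dy)\bigr)$, so that $\mu_2$ is the compound Poisson factor (supported in $(-\infty,0]$, hence $\mu_2(x+\Delta)=0$ for $x>0$), while $\mu_1$ is the remaining ID carrying the Gaussian part, the drift, and $\nu|_{[-1,\infty)}$, and in particular satisfies $\int_{\R}e^{-\gamma y}\mu_1(dy)<\infty$ for every $\gamma>0$. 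With this decomposition, Proposition \ref{prop:decomp:delta:Poisson} applies directly to $\mu=\mu_2\ast\mu_1$ (its ``$G$'' is compound Poisson) and yields $\mu_1\in\cals_\Delta\cap\cald_\Delta$ and $\mu(x+\Delta)\sim\mu_1(x+\Delta)$ from (i); then, because $\mu_1$ has the exponential moment condition, the paper invokes Theorem 1.2 of Watanabe--Yamamuro (2009) to close the three-way equivalence for $\mu_1$, and transfers back to $\mu$ via the asymptotic identity. Your decomposition buys you a direct invocation of Theorem \ref{thm:cp:twosided} for the $\nu_{(1)}\leftrightarrow\mu_1$ step, but it destroys every handle on the $\mu_1\leftrightarrow\mu$ step in the (i)$\Rightarrow$(iii) and (iii)$\Rightarrow$(ii) directions, because your $\mu_2$ is neither compound Poisson nor has an atom at $0$ of mass exceeding $1/2$, and $\mu_1\in\cala_\Delta$ is not assumed.

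The sketch you offer to fill that hole --- split the convolution integral at $|y|\le h(x)$ and calibrate $h$ --- cannot work as stated, and the obstruction is not one of calibration but of circularity. To control the tails you need, first, $\mu_1\in\call_\Delta\cap\cald_\Delta$ (to make the bulk $|y|\le h(x)$ behave) and, second, a Kesten-type upper bound $\mu_1(x+\Delta)\le C(1+\vep)^n\,\nu_{(1)}(x+\Delta)$ uniformly along the Poisson series (to make the $|y|>h(x)$ part negligible). But Proposition \ref{prop:kesten} requires $\cals_\Delta\cap\cald_\Delta$ of the summand, and there is no pre-Kesten route from $\nu_{(1)}\in\call_\Delta\cap\cald_\Delta$ to $\mu_1\in\call_\Delta\cap\cald_\Delta$ in the paper's toolbox: the Poisson series upper bound is exactly what one needs $\cals_\Delta$ for. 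So both the long-tailedness and the almost-decreasing property of $\mu_1(x+\Delta)$ are downstream of what you are trying to prove, and no choice of $h(x)$ undoes that. Your (ii)$\Rightarrow$(i),(iii) direction is correct and essentially matches the paper's converse step (via Proposition \ref{prop:asympt:equiv2} and Lemma \ref{lem:ald:symp:equiv}); the asymptotic estimate $\mu_2(x+\Delta)=o(\nu_{(1)}(x+\Delta))$ via $\log\nu_{(1)}(x+\Delta)=-o(x)$ is also correct. But as you yourself say, the other two directions remain open under your decomposition; to close them you would need to either flip to the paper's decomposition and use Proposition \ref{prop:decomp:delta:Poisson}, or import a result of Watanabe--Yamamuro (2009, Theorem 1.2) type for IDs with exponential moments.
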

\begin{remark}
In Theorem \ref{thm:ID:delta:subexponential}, the moment condition $[\,\int_{\R} e^{-\vep y}\nu (dy)<\infty$ for some $\vep>0\,]$ in 
\cite[Theorem 1.2]{Watanabe:Yamamuro:2009} is removed, but one has to pay the 
price that the condition $\cald_\Delta$ is required in both $\mu$ and $\nu_{(1)}$. The merit is that we only 
need to check the positive-half side 
of $\mu$ or $\nu$. 
\end{remark}
Notice that the assumption $\nu_{(1)} \in  \call_\Delta $ of Theorem \ref{thm:ID:delta:subexponential} is too strong, 
and thus we derive the three equivalent relations in different conditions. 
\begin{theorem}
\label{thm:ID:delta:subexponential:cp}
 Let $\mu \in \id$ and for a constant $c_1>1$ let $\mu_{c_1}$ be a compound Poisson with 
the L\'evy measure $\nu_{c_1}(dx)={\bf 1}_{\{c_1 <x\}}\nu (dx)$
Suppose that there exists $c_1>1$ such that $\mu_{c_1}\in \cala_\Delta$. 
 Then following assertions 
are equivalent.\\
$(\mathrm{i})$ $\mu \in \cals_\Delta \cap \cald_\Delta$, 
\quad $(\mathrm{ii})$ $\nu_{(1)} \in \cals_\Delta $,  
\quad$(\mathrm{iii})$ $\nu_{(1)} \in \call_\Delta \cap \cald_\Delta $ and $ \mu(x+\Delta)\sim \nu(x+\Delta)$. 
\end{theorem}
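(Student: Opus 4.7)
The plan is to cycle $(\mathrm{i}) \Rightarrow (\mathrm{iii}) \Rightarrow (\mathrm{ii}) \Rightarrow (\mathrm{i})$ using the decomposition $\mu = \mu_{c_1} \ast \mu'$, where $\mu'$ is the ID with L\'evy measure $\nu-\nu_{c_1}$. Since $\nu-\nu_{c_1}$ has right support bounded by $c_1$, $\mu'$ admits all positive exponential moments, hence $\mu'(x+\Delta)=O(e^{-\delta x})$ for every $\delta>0$; since any $H\in\call_\Delta$ has tail heavier than any exponential, this immediately gives $\mu'(x+\Delta)=o(H(x+\Delta))$ whenever $H\in\call_\Delta$. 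The measure $\mu_{c_1}$ is a compound Poisson supported on $\R_+$ with intensity $\lambda=\nu((c_1,\infty))$ and step distribution $G_0:=\nu_{c_1}/\lambda$. Writing $\nu_{(1)}=pF_1+(1-p)G_0$ with $F_1$ supported in $(1,c_1]$ and $p=\nu((1,c_1])/\nu((1,\infty))$ exhibits $G_0$ and $\nu_{(1)}$ as having asymptotically proportional right tails.

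Two auxiliary equivalences organize the argument. (A) From the mixture identity, the vanishing $F_1^{\ast 2}(x+\Delta)=0$ for $x>2c_1$, and $F_1\ast G_0(x+\Delta)\sim G_0(x+\Delta)$ (which holds as soon as $G_0\in\call_\Delta$), one obtains $\nu_{(1)}\in\call_\Delta\Leftrightarrow G_0\in\call_\Delta$ and $\nu_{(1)}\in\cals_\Delta\Leftrightarrow G_0\in\cals_\Delta$; Lemma \ref{lem:ald:symp:equiv} gives the analogous equivalence for $\cald_\Delta$. (B) Since $\mu_{c_1}$ is supported on $\R_+$, the positive-half half of Theorem \ref{thm:cp:twosided} gives $\mu_{c_1}\in\cals_\Delta\Leftrightarrow G_0\in\cals_\Delta$, together with $\mu_{c_1}(x+\Delta)\sim\lambda G_0(x+\Delta)=\nu(x+\Delta)$ for $x>c_1$.

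For $(\mathrm{i})\Rightarrow(\mathrm{iii})$, I apply Proposition \ref{prop:factrization:delta} to $\mu=\mu'\ast\mu_{c_1}$ with $\mu_{c_1}\in\cala_\Delta$, $\mu\in\cald_\Delta$, and $\mu'(x+\Delta)=o(\mu(x+\Delta))$; this yields $\mu_{c_1}\in\cals_\Delta$. Then (B) delivers $G_0\in\cals_\Delta$ and $\mu_{c_1}(x+\Delta)\sim\nu(x+\Delta)$, while Proposition \ref{prop:asympt:equiv2} applied with $F=G_1=\mu_{c_1}$ and $G_2=\mu'$ gives $\mu(x+\Delta)\sim\mu_{c_1}(x+\Delta)\sim\nu(x+\Delta)$; finally (A) and Lemma \ref{lem:ald:symp:equiv} upgrade this to $\nu_{(1)}\in\call_\Delta\cap\cald_\Delta$. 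For $(\mathrm{iii})\Rightarrow(\mathrm{ii})$, condition (iii) supplies $\nu_{(1)}\in\call_\Delta$, which activates Theorem \ref{thm:ID:delta:subexponential}; its conclusion $\nu_{(1)}\in\cals_\Delta\cap\cald_\Delta$ contains (ii). For $(\mathrm{ii})\Rightarrow(\mathrm{i})$, (A) gives $G_0\in\cals_\Delta$, (B) gives $\mu_{c_1}\in\cals_\Delta$, and the hypothesis $\mu_{c_1}\in\cala_\Delta\subset\cald_\Delta$ promotes this to $\mu_{c_1}\in\cals_\Delta\cap\cald_\Delta$; Proposition \ref{prop:asympt:equiv2} applied to $\mu=\mu_{c_1}\ast\mu'$ then produces $\mu\in\cals_\Delta$ and $\mu(x+\Delta)\sim\mu_{c_1}(x+\Delta)$, whence $\mu\in\cald_\Delta$ by Lemma \ref{lem:ald:symp:equiv}.

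The main subtlety is in the $(\mathrm{i})\Rightarrow(\mathrm{iii})$ step: the hypothesis $\mu'(x+\Delta)=o(\mu(x+\Delta))$ needed to invoke Proposition \ref{prop:factrization:delta} must be verified \emph{before} the asymptotics $\mu(x+\Delta)\sim\nu(x+\Delta)$ are available, but this is handled cleanly from $\mu\in\cals_\Delta\subset\call_\Delta$ combined with the super-exponential decay of $\mu'$. A secondary care point is that in (A) one should derive $G_0\in\call_\Delta$ from whichever side of the equivalence is being assumed before invoking $F_1\ast G_0\sim G_0$; this avoids the need to propagate $\cald_\Delta$ through the mixture identity. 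Everything else is a routine application of the properties developed in Section \ref{section:prop:sdelta:sloc} together with the compound Poisson case of Theorem \ref{thm:cp:twosided}.
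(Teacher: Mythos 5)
Your proof is correct and follows the same overall blueprint as the paper: decompose $\mu=\mu_{c_1}*\mu'$ with $\mu'$ carrying the light-tailed part of $\nu$, use Proposition \ref{prop:factrization:delta} to peel off $\mu_{c_1}$, invoke the one-sided part of Theorem \ref{thm:cp:twosided} for the compound Poisson $\mu_{c_1}$, connect back to $\mu$ via Proposition \ref{prop:asympt:equiv2} and Lemma \ref{lem:ald:symp:equiv}, and use Theorem \ref{thm:ID:delta:subexponential} for $(\mathrm{iii})\Rightarrow(\mathrm{ii})$. The one genuine variation is in how you relate $\nu_{(1)}$ to the step distribution $G_0=\nu_{(c_1)}$ in the $(\mathrm{i})\Rightarrow(\mathrm{iii})$ direction: you establish the equivalences $\nu_{(1)}\in\cals_\Delta\Leftrightarrow G_0\in\cals_\Delta$ (and likewise for $\call_\Delta,\cald_\Delta$) by hand through the mixture identity $\nu_{(1)}=pF_1+(1-p)G_0$ with $F_1$ compactly supported, so that $F_1^{*2}(x+\Delta)$ vanishes and $F_1*G_0(x+\Delta)\sim G_0(x+\Delta)$ whenever $G_0\in\call_\Delta$. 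The paper instead introduces two extra compound Poissons — $\mu_1$ with step distribution $\nu_{(1)}$ and $\mu_3$ with L\'evy measure $\mathbf 1_{(1,c_1]}\nu$, so that $\mu_1=\mu_3*\mu_{c_1}$ — and transfers $\cals_\Delta$ from $\mu_{c_1}$ to $\mu_1$ via Proposition \ref{prop:asympt:equiv2}, then back to $\nu_{(1)}$ via Theorem \ref{thm:cp:twosided} applied to $\mu_1$. Both routes are sound; yours avoids the auxiliary measures and works directly at the level of the step distributions, which is a bit more elementary, at the cost of some explicit bookkeeping with the convolution of a mixture (including the subtraction step $(1-p)^2G_0^{*2}(x+\Delta)=\nu_{(1)}^{*2}(x+\Delta)-2p(1-p)F_1*G_0(x+\Delta)$, which you handle correctly because the limits of the two terms on the right are known and leave a positive remainder). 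You also correctly flag the order-of-operations issue — obtaining $\mu'(x+\Delta)=o(\mu(x+\Delta))$ from $\mu\in\call_\Delta$ before the stronger asymptotics are available — which is the same device the paper uses.
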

Now we could treat the $s$-self-decomposable distribution which is a more general class in $\id$ than 
the self-decomposable distribution (\cite{Jurek:1985}), namely its L\'evy density $g$ of $\nu(dx)=g(x)dx$ 
satisfies that $g(x)$ is non-increasing on $(0,\infty)$ and is non-decreasing on $(-\infty,0)$
 (\cite[Theorem 2.2 (b)]{Jurek:1985}). 
\begin{theorem}
\label{thm:ID:subexponential:sself}
 Let $\mu \in \id$ be an $s$-self-decomposable distribution with the L\'evy measure $\nu(dx)$.  
 Then following assertions 
are equivalent.\\
$(\mathrm{i})$ $\mu \in \cals_\Delta \cap \cald_\Delta$, 
\quad $(\mathrm{ii})$ $\nu_{(1)} \in \cals_\Delta $,  
\quad$(\mathrm{iii})$ $\nu_{(1)} \in \call_\Delta $ and $ \mu(x+\Delta)\sim \nu(x+\Delta)$. 
\end{theorem}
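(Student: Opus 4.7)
The plan is to derive Theorem \ref{thm:ID:subexponential:sself} as a corollary of Theorem \ref{thm:ID:delta:subexponential:cp} by showing that the two items in which the two statements differ are both handled automatically by the $s$-self-decomposable structure, namely: $(a)$ the ``$\cald_\Delta$ on $\nu_{(1)}$'' appearing in (iii) of Theorem \ref{thm:ID:delta:subexponential:cp} is free of charge, and $(b)$ the standing hypothesis that there exists $c_1>1$ with $\mu_{c_1}\in\cala_\Delta$ is automatic.

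For $(a)$, I would invoke the defining property of the $s$-self-decomposable class: the L\'evy density $g$ is non-increasing on $(0,\infty)$. Hence for $x>1$,
\[
 \nu_{(1)}(x+\Delta)=\frac{1}{\nu((1,\infty))}\int_x^{x+c}g(y)\,dy
\]
is non-increasing in $x$, so $\nu_{(1)}\in\cala_\Delta\subset\cald_\Delta$ with no further hypothesis. This shows that statement (iii) of Theorem \ref{thm:ID:subexponential:sself} coincides with statement (iii) of Theorem \ref{thm:ID:delta:subexponential:cp}.

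For $(b)$, I would use the L\'evy integrability $\int(1\wedge x^2)\nu(dx)<\infty$ to choose $c_1>1$ large enough that $\lambda_{c_1}:=\nu((c_1,\infty))$ is arbitrarily small. The Poisson expansion
\[
 \mu_{c_1}(dx)=e^{-\lambda_{c_1}}\delta_0(dx)+\sum_{n\geq 1}\frac{\lambda_{c_1}^n e^{-\lambda_{c_1}}}{n!}G_{c_1}^{*n}(dx),\qquad G_{c_1}=\nu_{c_1}/\lambda_{c_1},
\]
then has leading $n=1$ contribution equal to $\lambda_{c_1}e^{-\lambda_{c_1}}G_{c_1}(x+\Delta)$, which is non-increasing because $g$ is non-increasing on $(c_1,\infty)$. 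The aim is to show that the higher-order terms do not spoil the asymptotic monotonicity of $\mu_{c_1}(x+\Delta)$, so that Lemma \ref{lem:equiv:ani:function} yields $\mu_{c_1}\in\cala_\Delta$ and Theorem \ref{thm:ID:delta:subexponential:cp} becomes applicable.

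The main obstacle is precisely to control $\sum_{n\geq 2}\lambda_{c_1}^n/n!\, G_{c_1}^{*n}(x+\Delta)$ without invoking subexponentiality in a circular way: the usual tools (the Kesten bound of Proposition \ref{prop:kesten}, single-big-jump decompositions as in Lemma \ref{lem:ifandonlyif:sub:delta}, or the asymptotic equivalence Propositions \ref{prop:asympt:equiv2}--\ref{prop:asympt:equiv3}) all presuppose $G_{c_1}\in\cals_\Delta\cap\cald_\Delta$, which is the very property we are trying to characterize. The remedy I would pursue is a direct induction on $n$ exploiting only the monotonicity of $g_{c_1}$ on $(c_1,\infty)$ and the smallness of $\lambda_{c_1}$: the convolutions $G_{c_1}^{*n}(x+\Delta)$ should be comparable, up to factors absorbed by the geometric weights $\lambda_{c_1}^n/n!$, to a non-increasing majorant built from $g_{c_1}$ alone (an argument in the spirit of the monotone-density techniques used in \cite{matsui:2022}). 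Once $\mu_{c_1}\in\cala_\Delta$ is established, Theorem \ref{thm:ID:delta:subexponential:cp} together with $(a)$ delivers the three equivalences of Theorem \ref{thm:ID:subexponential:sself}.
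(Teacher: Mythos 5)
Your observation $(a)$ is correct and is exactly how the paper handles $\cald_\Delta$ for $\nu_{(1)}$: since the L\'evy density $g$ is non-increasing on $(0,\infty)$, the increments $\nu_{(1)}(x+\Delta)$ are non-increasing for $x>1$, hence $\nu_{(1)}\in\cala_\Delta\subset\cald_\Delta$.

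The gap is in $(b)$, and you have in fact identified it yourself: you do not establish that $\mu_{c_1}\in\cala_\Delta$ unconditionally, and the ``direct induction on $n$'' you sketch is unlikely to work. The jump density $g_{c_1}={\bf 1}_{(c_1,\infty)}g/\lambda_{c_1}$ is supported on $(c_1,\infty)$, not on a neighbourhood of the origin, so $g_{c_1}^{*n}$ is supported on $(nc_1,\infty)$ and has a mode that shifts to infinity with $n$; whether the Poisson mixture $\sum_{n\geq 1}(\lambda_{c_1}^n/n!)G_{c_1}^{*n}(x+\Delta)$ is asymptotic to a non-increasing function for large $x$ depends on which $n$ dominates, and that is controlled by precisely the kind of tail information ($\nu_{(1)}\in\cals_\Delta$ or not) that the theorem is characterizing. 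The paper never claims $\mu_{c_1}\in\cala_\Delta$ in the absence of further hypotheses, and its proof is organized to avoid needing it.

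What the paper actually does. In the direction $(\mathrm{ii})\Rightarrow(\mathrm{i}),(\mathrm{iii})$ the circularity you flag is avoided by first using $(\mathrm{ii})$: asymptotic equivalence gives $\nu_{(c_1)}\in\cals_\Delta$, then Theorem \ref{thm:cp:twosided} (positive-half part) for the compound Poisson $\mu_{c_1}$ yields $\mu_{c_1}(x+\Delta)\sim\nu_{c_1}(x+\Delta)$, and since $\nu_{c_1}\in\cala_\Delta$ one deduces $\mu_{c_1}\in\cala_\Delta$; only then is Theorem \ref{thm:ID:delta:subexponential:cp} invoked. In the direction $(\mathrm{i})\Rightarrow(\mathrm{ii})$ --- the one your sketch cannot close --- the paper does not try to show $\mu_{c_1}\in\cala_\Delta$ at all. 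Instead it introduces a different auxiliary compound Poisson $\mu_{\ov c_1}$ whose L\'evy density is the symmetric unimodal extension $g(c_1)$ on $[-c_1,c_1]$ and $g(|x|)$ for $|x|>c_1$; by Wintner's theorem a compound Poisson with symmetric unimodal jump law is itself symmetric unimodal, hence $\mu_{\ov c_1}\in\cala_\Delta$ \emph{for free}, with no tail hypothesis. Writing $\mu_{\ov c_1}\ast\mu_0=\mu\ast\mu_-$ with two exponentially-light factors, Proposition \ref{prop:factrization:delta} transfers $\cals_\Delta$ from $\mu\ast\mu_-$ to $\mu_{\ov c_1}$, and then Corollary \ref{col:cp:twosided:ani} for the positive-half compound Poisson gives $\nu_{(1)}\in\cals_\Delta$. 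This symmetrization-unimodality device is the key idea your proposal is missing; without it (or something of comparable strength) the inductive control of $G_{c_1}^{*n}(x+\Delta)$ you gesture at would have to re-derive subexponential-type bounds from scratch, which is exactly the circularity you were trying to avoid.
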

We state the result for $\cals_{loc}$ 
corresponding to Theorem \ref{thm:ID:delta:subexponential}. 
\begin{theorem}
\label{thm:ID:loc:subexponential}
Let $\mu \in \id$. The following assertions are equivalent.\\
$(\mathrm{i})$ $\mu \in \cals_{loc} 
 \cap \cald_{loc}$, 
\quad $(\mathrm{ii})$ $\nu_{(1)} \in \cals_{loc} 
 \cap \cald_{loc}$, \\
$(\mathrm{iii})$ $\nu_{(1)} \in \call_{loc} \cap \cald_{loc}$ and $ \mu(x+\Delta)\sim \nu(x+\Delta)$ for any $\Delta$.
\end{theorem}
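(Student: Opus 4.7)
The plan is to apply the single-window Theorem \ref{thm:ID:delta:subexponential} pointwise for each $\Delta=(0,c]$. The easy directions $\mathrm{(ii)}\Leftrightarrow\mathrm{(iii)}$, $\mathrm{(ii)}\Rightarrow\mathrm{(i)}$ and $\mathrm{(iii)}\Rightarrow\mathrm{(i)}$ are immediate: in both (ii) and (iii) we have $\nu_{(1)}\in\call_{loc}$ (trivially in (iii), and via $\cals_{loc}\subset\call_{loc}$ in (ii)), so the prior hypothesis $\nu_{(1)}\in\call_\Delta$ of Theorem \ref{thm:ID:delta:subexponential} is met at every $\Delta$, and the equivalence of its three conditions transfers directly to the local statements.

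The real content is $\mathrm{(i)}\Rightarrow\mathrm{(ii)}$: given $\mu\in\cals_{loc}\cap\cald_{loc}$, fix an arbitrary $\Delta$ and establish $\nu_{(1)}\in\call_\Delta$ so that Theorem \ref{thm:ID:delta:subexponential} can be invoked. Decompose $\mu=\mu_{(1)}\ast\mu'$, where $\mu_{(1)}$ is the compound Poisson with L\'evy measure $\nu|_{(1,\infty)}$ and $\mu'$ is the ID carrying the remaining L\'evy data, shift, and Gaussian component. Since the positive jumps of $\mu'$ are bounded by $1$, $\mu'$ admits all positive exponential moments, so $\mu'(x+\Delta)$ decays exponentially; combined with $\mu\in\call_\Delta$ (which forces $\mu(x+\Delta)$ to decay slower than any exponential), this yields $\mu'(x+\Delta)=o(\mu(x+\Delta))$. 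Applying the factorization Propositions \ref{prop:decomp:delta} and \ref{prop:decomp:delta:Poisson}---iteratively, after splitting $\mu'$ as a cascade of small-intensity compound Poisson factors (each with origin mass $e^{-\lambda}>1/2$) and a Gaussian-plus-near-zero residue---I transfer $\mu\in\cals_\Delta\cap\cald_\Delta$ to $\mu_{(1)}\in\cals_\Delta\cap\cald_\Delta$ together with $\mu(x+\Delta)\sim\mu_{(1)}(x+\Delta)$. The positive-half compound Poisson part of Theorem \ref{thm:cp:twosided} then delivers $\nu_{(1)}\in\cals_\Delta$ and $\mu_{(1)}(x+\Delta)\sim\nu((1,\infty))\,\nu_{(1)}(x+\Delta)=\nu(x+\Delta)$, while $\nu_{(1)}\in\cald_\Delta$ follows from $\mu_{(1)}\in\cald_\Delta$ by Lemma \ref{lem:ald:symp:equiv}. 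Since $\Delta$ was arbitrary, both (ii) and (iii) follow.

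The main obstacle is the cascade step: neither Proposition \ref{prop:decomp:delta} nor \ref{prop:decomp:delta:Poisson} applies to $\mu'$ as a single factor, since a Gaussian component or an infinite L\'evy singularity at the origin prevents both the $>\tfrac{1}{2}$ atom at $0$ and the compound Poisson structure required by those propositions. Peeling off the compound Poisson factors of $\mu'$ one at a time is routine, but absorbing the Gaussian-plus-near-zero residue is the delicate piece---it may require either a further approximation of the residue by a sequence of small-intensity compound Poissons, or an ad hoc argument exploiting the exponential moments to carry the asymptotic equivalence $\mu(x+\Delta)\sim\mu_{(1)}(x+\Delta)$ through the residue directly.
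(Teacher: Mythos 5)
Your plan for the easy directions is sound: once $\nu_{(1)}\in\call_{loc}$ is in hand (which is part of both $(\mathrm{ii})$ and $(\mathrm{iii})$), you can indeed invoke Theorem~\ref{thm:ID:delta:subexponential} window by window and read off $(\mathrm{ii})\Leftrightarrow(\mathrm{iii})\Rightarrow(\mathrm{i})$. But the honest gap you flag in $(\mathrm{i})\Rightarrow(\mathrm{ii})$ is real, and the cascade you sketch does not close it. After you peel off the finitely many small-intensity compound Poisson factors of $\mu'$ via Proposition~\ref{prop:decomp:delta}, you are left with a residue $R$ that has L\'evy mass concentrated near the origin (possibly infinite) together with the Gaussian part and the shift. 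That $R$ has no atom at $0$ (so Proposition~\ref{prop:decomp:delta} is unavailable) and is not compound Poisson (so Proposition~\ref{prop:decomp:delta:Poisson} is unavailable); Proposition~\ref{prop:factrization:delta} would require an a.n.i.\ hypothesis on $\mu_{(1)}$, which is exactly what you are trying to prove. There is no elementary ``approximate $R$ by small compound Poissons'' workaround, because the convolution limits do not obviously commute with the $\Delta$-asymptotics, and filling that in would amount to reproving the external structural result the paper relies on.

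The paper takes the opposite decomposition and avoids the residue problem entirely. In the proof of Theorem~\ref{thm:ID:delta:subexponential} (reused here), $\mu_2$ is the compound Poisson whose L\'evy measure is $\nu$ restricted to $(-\infty,-1)$, so $\mu_2$ is supported on $(-\infty,0]$ and $\mu_2(x+\Delta)=o(\mu(x+\Delta))$ for free; Proposition~\ref{prop:decomp:delta:Poisson} then transfers $\cals_{loc}\cap\cald_{loc}$ to $\mu_1$ with $\mu(x+\Delta)\sim\mu_1(x+\Delta)$. The factor $\mu_1$ still carries the Gaussian component and the near-origin L\'evy activity (it is not a compound Poisson), but its L\'evy measure lives on $[-1,\infty)$ and so $\mu_1$ has all negative exponential moments. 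At that point the paper invokes Corollary~2.1 of Watanabe--Yamamuro \cite{Watanabe:Yamamuro:2010}, which characterizes local subexponentiality for exactly such IDs and delivers $(\mathrm{ii})$ and the asymptotic equivalence without first establishing $\nu_{(1)}\in\call_{\Delta}$. That citation is the missing ingredient in your argument: without it (or a proof of its content), the reduction to the positive-half compound Poisson case in Theorem~\ref{thm:cp:twosided} cannot be reached.
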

\begin{remark} 
\label{rem:conv:root:cp:id}
Recall that the classes $\cals_{ac}, \cals_{\Delta} $ and $\cals_{loc} $ are not closed under convolution roots, see 
Remark \ref{rem:conv:root}.   
However, we find from Theorem \ref{thm:cp:twosided} and Corollary \ref{coro:cp:local} that 
the classes $\cals_{\Delta}$ in the one-sided case and $\cals_\Delta \cap \cald_\Delta$ are closed within 
compound Poisson distributions. Moreover, due to Theorem \ref{thm:ID:loc:subexponential} and \cite[Theorem 1.1]{Watanabe:Yamamuro:2010} 
, the classes $\cals_{loc}$ in one-sided case and $\cals_{loc} \cap \cald_{loc}$ are closed within 
the classes of $\idrp$ and $\id$ respectively. 
Thus we raise a natural open question:\end{remark}  
\vspace{-2mm}
{\bf Problem}: Are the classes $\cals_{ac}$ and  $\cals_{\Delta}$  in the one-sided case and the classes $\cals_{ac} \cap \cald_{ac}$ and $\cals_\Delta \cap \cald_\Delta$ closed under convolution roots within the classes of $\idrp$ and $\id$ respectively $?$

\section{Application} 
\label{sec:application}

Needless to say, a random walk and a randomly stopped sum 
have applications in a variety of fields. We will observe that 
$\Delta$-subexponentiality is the characterized in the two classical themes. 
Thus, the same results hold for local subexponentiality.

\subsection{Supremum of a random walk}
We are starting with $\Delta$-subexponentiality of  
compound negative binomial distributions, which include compound geometric distributions  
as a subclass.  
\begin{theorem}
\label{theoremcompound}
Let $0<\lambda<1$,  $a>0$ and $G$ be a distribution. Define a compound negative binomial distribution $F_a$ as
\begin{equation}
 F_a:=\sum_{n=0}^{\infty}{a+n-1\choose a-1}(1-\lambda)^{a}\lambda^nG^{*n}. \nonumber
 \end{equation}
Then the following assertions are equivalent.\\
 $(\mathrm{i})$ $F_a \in \cals_\Delta \cap \cald_\Delta$, \quad $(\mathrm{ii})$ $G \in \cals_\Delta \cap \cald_\Delta$, \\
$(\mathrm{iii})$ $G \in \call_\Delta \cap \cald_\Delta$ and $ F_a (x+\Delta)\sim a\lambda/(1-\lambda) G (x+\Delta)$. \\
If additionally $G$ is a distribution on $\R_+$, then the following assertions are equivalent, \\
$(\mathrm{i})$ $F_a \in \cals_\Delta$, 
\quad $(\mathrm{ii})$ $G \in \cals_\Delta$, 
\quad $(\mathrm{iii})$ $G \in \call_\Delta$ and $ F_a(x+\Delta)\sim a\lambda/(1-\lambda) G(x+\Delta)$.
\end{theorem}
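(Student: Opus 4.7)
The plan is to identify $F_a$ as a compound Poisson distribution and then invoke Theorem~\ref{thm:cp:twosided}. Using the expansion $-\log(1-w)=\sum_{k=1}^\infty w^k/k$ in the characteristic function
\[
\hat F_a(z)=\Bigl(\frac{1-\lambda}{1-\lambda\hat G(z)}\Bigr)^a
=\exp\Bigl(\sum_{k=1}^\infty \frac{a\lambda^k}{k}\bigl(\hat G(z)^k-1\bigr)\Bigr)
\]
reveals $F_a$ as a compound Poisson distribution with L\'evy measure $\nu_a=\sum_{k=1}^\infty(a\lambda^k/k)G^{*k}$, total mass $\lambda_{\mathrm{CP}}:=-a\log(1-\lambda)$, and jump distribution $G_{\mathrm{CP}}:=\nu_a/\lambda_{\mathrm{CP}}$, which notably does not depend on $a$. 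Theorem~\ref{thm:cp:twosided} applied to $F_a$ with jump distribution $G_{\mathrm{CP}}$ and intensity $\lambda_{\mathrm{CP}}$ then reduces the desired three-way equivalence to the bridge
\[
G\in\cals_\Delta\cap\cald_\Delta\iff G_{\mathrm{CP}}\in\cals_\Delta\cap\cald_\Delta,
\]
together with the identification $\lambda_{\mathrm{CP}}G_{\mathrm{CP}}(x+\Delta)\sim (a\lambda/(1-\lambda))G(x+\Delta)$.

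For the direction (ii) $\Rightarrow$ (i), (iii), starting from $G\in\cals_\Delta\cap\cald_\Delta$, I would apply Kesten's bound (Proposition~\ref{prop:kesten}) with $\varepsilon>0$ chosen so small that $\lambda(1+\varepsilon)<1$; this majorises $(a\lambda^k/k)G^{*k}(x+\Delta)/G(x+\Delta)$ by the summable sequence $c(\varepsilon)a(\lambda(1+\varepsilon))^k/k$. Coupled with the termwise limit $G^{*k}(x+\Delta)/G(x+\Delta)\to k$ from Proposition~\ref{prop:asympt:equiv3}, dominated convergence yields
\[
\nu_a(x+\Delta)/G(x+\Delta)\to \sum_{k=1}^\infty a\lambda^k=a\lambda/(1-\lambda),
\]
so that $G_{\mathrm{CP}}(x+\Delta)\sim\gamma\,G(x+\Delta)$ with $\gamma:=(\lambda/(1-\lambda))/(-\log(1-\lambda))>0$. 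Proposition~\ref{prop:asympt:equiv} together with Lemma~\ref{lem:ald:symp:equiv} then promotes $G_{\mathrm{CP}}$ into $\cals_\Delta\cap\cald_\Delta$, and Theorem~\ref{thm:cp:twosided} delivers $F_a\in\cals_\Delta\cap\cald_\Delta$ together with $F_a(x+\Delta)\sim\lambda_{\mathrm{CP}}G_{\mathrm{CP}}(x+\Delta)\sim(a\lambda/(1-\lambda))G(x+\Delta)$, yielding (i) and (iii) simultaneously.

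The main obstacle is the reverse implication (i) $\Rightarrow$ (ii); the implication (iii) $\Rightarrow$ (ii) will be treated in the same way after first noting that (iii) combined with the elementary inequality $G_{\mathrm{CP}}\ge c_0\lambda G$ (see below) suffices to verify the tail hypothesis of Theorem~\ref{thm:cp:twosided} for $F_a$. Starting from (i), Theorem~\ref{thm:cp:twosided} returns $G_{\mathrm{CP}}\in\cals_\Delta\cap\cald_\Delta$ and $F_a(x+\Delta)\sim\lambda_{\mathrm{CP}}G_{\mathrm{CP}}(x+\Delta)$, and one must then invert the convex mixture $G_{\mathrm{CP}}=c_0\sum_{k=1}^\infty(\lambda^k/k)G^{*k}$, with $c_0=(-\log(1-\lambda))^{-1}$, to recover the tail of $G$. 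The easy half-bound $G_{\mathrm{CP}}\ge c_0\lambda G$ immediately gives $G(x+\Delta)=O(G_{\mathrm{CP}}(x+\Delta))$; the matching estimate $G_{\mathrm{CP}}(x+\Delta)=O(G(x+\Delta))$ is the delicate step, which I would attack by combining Kesten's bound applied to $G_{\mathrm{CP}}^{*k}$ with the pointwise domination $(c_0\lambda)^k G^{*k}\le G_{\mathrm{CP}}^{*k}$ (obtained by isolating the all-ones term in the expansion of $(\sum_m(\lambda^m/m)G^{*m})^{*k}$) to control the residual tail $c_0\sum_{k=2}^\infty(\lambda^k/k)G^{*k}(x+\Delta)$, and then invoking a factorisation argument in the spirit of Proposition~\ref{prop:decomp:delta:Poisson} applied to the compound Poisson structure of $F_a$. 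Once the two-sided comparison $G(x+\Delta)\asymp G_{\mathrm{CP}}(x+\Delta)$ is in hand, Lemma~\ref{lem:ald:symp:equiv} gives $G\in\cald_\Delta$, the asymptotic equivalence gives $G\in\call_\Delta$, and Proposition~\ref{prop:asympt:equiv} lifts the $\cals_\Delta$-property from $G_{\mathrm{CP}}$ to $G$. The positive-half clause of the theorem is parallel and drops every $\cald_\Delta$-condition by virtue of the second half of Theorem~\ref{thm:cp:twosided}.
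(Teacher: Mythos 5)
Your identification of $F_a$ as a compound Poisson distribution with intensity $\delta a = -a\log(1-\lambda)$ and jump distribution $\nu_0 := \delta^{-1}\sum_{k\ge1}(\lambda^k/k)G^{*k}$ is exactly the paper's first step, and your treatment of the forward implication $(\mathrm{ii})\Rightarrow(\mathrm{i}),(\mathrm{iii})$ via Kesten's bound and dominated convergence on the series $\sum(\lambda^k/k)G^{*k}$ agrees with the paper.

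The gap is in $(\mathrm{i})\Rightarrow(\mathrm{ii})$, for two distinct reasons. First, the bound you propose for the residual tail $c_0\sum_{k\ge2}(\lambda^k/k)G^{*k}(x+\Delta)$ combines the pointwise domination $G^{*k}\le(c_0\lambda)^{-k}G_{\mathrm{CP}}^{*k}$ with Kesten's bound for $G_{\mathrm{CP}}$; this produces a geometric factor $((1+\vep)/c_0)^k$ with $c_0=1/(-\log(1-\lambda))$, which is summable only when $-\log(1-\lambda)<1$, i.e.\ $\lambda<1-e^{-1}$. Your appeal to ``a factorisation argument in the spirit of Proposition~\ref{prop:decomp:delta:Poisson}'' to cover the remaining range is left unspecified and does not obviously yield tail information about $G$, since $G$ is buried inside the mixture defining $\nu_0$ rather than appearing as a convolution factor of $F_a$. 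Second, and more fundamentally, even if you obtained the two-sided comparison $G(x+\Delta)\asymp G_{\mathrm{CP}}(x+\Delta)$ for all $\lambda$, this is not sufficient: Proposition~\ref{prop:asympt:equiv} requires $G\in\call_\Delta$ as a separate hypothesis when only $\asymp$ is available, and $\call_\Delta$ is not inherited from $G_{\mathrm{CP}}\in\call_\Delta$ under mere $\asymp$-comparison. The paper sidesteps both problems with one idea you missed: inverting the exponential relation gives the \emph{alternating} series
\[
G=-\lambda^{-1}\sum_{n=1}^{\infty}\frac{(-\delta)^n}{n!}\,\nu_0^{*n},
\]
whose exponential decay in $n$ makes Kesten's bound for $\nu_0$ unconditionally summable for every $\lambda\in(0,1)$, and whose termwise limits (each $n$ contributing $n$ by Proposition~\ref{prop:asympt:equiv3}) assemble via dominated convergence into the \emph{exact} asymptotic
\[
\lim_{x\to\infty}\frac{G(x+\Delta)}{\nu_0(x+\Delta)}=\frac{\delta e^{-\delta}}{\lambda}>0.
\]
This precise constant-multiple equivalence, rather than $\asymp$, is what gives $G\in\call_\Delta$ for free and allows the ``in particular'' clause of Proposition~\ref{prop:asympt:equiv} to conclude $G\in\cals_\Delta$. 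You should replace the $O$-estimates and the hand-waved factorisation by this inverse series computation.
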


Next, we define the distribution $\pi$ of the supremum of a random walk. Let $\{X_n\}_{n=1}^{\infty}$ be iid random variables with the common distribution $G$ on $\mathbb R$. Let $\{S_n\}_{n=0}^{\infty}$ be a random walk on $\mathbb R$ defined by $S_0:=0$ and $S_n:=\sum_{k=1}^{n}X_k$ for $n \geq 1$. Let $\pi$ be the distribution of the supremum $M$ of  $\{S_n\}$, that is, $M:=\sup_{n \geq 0} S_n.$ 
Denote by ${\bf 1}_{(0,\infty)}(x)$ the indicator function of the set $(0,\infty).$ Define the measure $\nu$ on $(0,\infty)$ and a quantity $B$ as 
\begin{equation}
\nu(dx):={\bf 1}_{(0,\infty)}(x)\sum_{n=1}^{\infty}n^{-1}G^{*n}(dx) \nonumber
\end{equation}
and
$$ B:=\sum_{n=1}^{\infty}n^{-1}P(S_n> 0)=\nu((0,\infty)).$$
It is well known that $M < \infty$ a.s. if and only if $B < \infty$ and that if  $B < \infty$, then $\pi$ is a compound Poisson distribution on $\mathbb R_+$ with L\'evy measure $\nu$. A sufficient condition for $M < \infty$ a.s. is that $-\infty < \E [X_1] < 0$,  
(for more details, see \cite[Section XII.7]{Feller:2008}). 
Define $\lambda$ as $\lambda:=1-e^{-B}$ when $B< \infty$. 
We denote by $\lambda G_0$ the defective 
distribution of the first ascending ladder height in the random walk $\{S_n\}$. 
Here, $G_0$ is a distribution on $\mathbb R_+$ with $ G_0(\{0\})=0$ given by  
\[
 G_0= -\lambda^{-1} \sum_{n=1}^\infty \big( (-1)^n/n! \big) \nu^{\ast n} \quad \Leftrightarrow \quad 
\nu= \sum_{n=1}^\infty (\lambda^n/n) G_0^{\ast n}. 
\]
It is also well known that 
\begin{equation}
\pi= \sum_{n=0}^{\infty}(1-\lambda)\lambda^n G_0^{*n}, \nonumber
\end{equation}
which is a compound geometric distribution, $(a=1)$ in $F_a$ (see, e.g. \cite[Section 5.3]{Foss:Korshunov:Zachary:2013} or 
\cite[Chapter X, 3.]{AsumussenAlbrecher2010}).
The distribution $\pi$ is important in classical ruin theory and queuing theory (see Asmussen and Albrecher \cite{AsumussenAlbrecher2010}).  We say that $F$ is {\it non-lattice} if the support of $F$ is not on any lattice.
\begin{theorem}
\label{theoremsupremum}
 Suppose that  $-\infty < \E [X_1] < 0$ and that $G$ is non-lattice. Then the following assertions are equivalent.\\
$(\mathrm{i})$ $\pi \in \cals_\Delta$,\quad $(\mathrm{ii})$ $B^{-1}\nu  \in \cals_\Delta$, \quad 
$(\mathrm{iii})$ $B^{-1}\nu  \in \call_\Delta$ and $\pi(x+\Delta)\sim \nu(x+\Delta)$, \\
$(\mathrm{iv})$ $G_0 \in \cals_\Delta$, 
\quad $(\mathrm{v})$ $G_0  \in \call_\Delta$ and $ \pi(x+\Delta)\sim \lambda/(1-\lambda) G_0(x+\Delta)$. 
\end{theorem}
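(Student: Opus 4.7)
The strategy is to exploit two different compound representations of $\pi$ and combine them with the characterizations from Section \ref{compund:poisson}.

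Under $-\infty<\mathbb{E}[X_1]<0$ the supremum $M$ is a.s.\ finite, $B<\infty$, and $\pi$ is a compound Poisson distribution on $\mathbb{R}_+$ with L\'evy measure $\nu$. Since $\nu$ is a finite measure of total mass $B$ supported on $(0,\infty)$, the normalized measure $B^{-1}\nu$ is a probability distribution on $\mathbb{R}_+$, and
\[
\pi=e^{-B}\sum_{n=0}^\infty \frac{B^n}{n!}\,(B^{-1}\nu)^{\ast n}
\]
realizes $\pi$ as the compound Poisson \eqref{def:cp:measure} with intensity $B$ and jump distribution $B^{-1}\nu$. On the other hand, the identity $\pi=\sum_{n=0}^\infty (1-\lambda)\lambda^n G_0^{\ast n}$ places $\pi$ in the compound negative binomial family of Theorem \ref{theoremcompound} with $a=1$ and jump distribution $G_0$, which lives on $\mathbb{R}_+$.

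For $(\mathrm{i})\Leftrightarrow(\mathrm{ii})\Leftrightarrow(\mathrm{iii})$, I apply the positive-half part of Theorem \ref{thm:cp:twosided} to the first representation, taking the intensity parameter equal to $B$ and the jump law equal to $B^{-1}\nu$. This yields $\pi\in\cals_\Delta \Leftrightarrow B^{-1}\nu\in\cals_\Delta$, and further equivalence with $B^{-1}\nu\in\call_\Delta$ together with the tail ratio $\pi(x+\Delta)/(B^{-1}\nu)(x+\Delta)\sim B$, i.e.\ $\pi(x+\Delta)\sim \nu(x+\Delta)$. These are exactly (ii) and (iii). For $(\mathrm{i})\Leftrightarrow(\mathrm{iv})\Leftrightarrow(\mathrm{v})$, I apply the positive-half part of Theorem \ref{theoremcompound} with $a=1$ and $G=G_0$; the tail constant $a\lambda/(1-\lambda)$ in that theorem specializes to $\lambda/(1-\lambda)$, giving (v). Combining the two chains produces the full five-term cycle of equivalences.

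The proof is therefore essentially a direct reduction to the two characterization theorems, and the main obstacle is simply bookkeeping: keeping track of the normalizations $B$ and $\lambda/(1-\lambda)$ under the rescaling $\nu\mapsto B^{-1}\nu$, and verifying that we are indeed in the positive-half regime of each invoked theorem, which holds by construction since $\nu$ is supported on $(0,\infty)$ and $G_0$ on $\mathbb{R}_+$ with $G_0(\{0\})=0$. The non-lattice assumption on $G$ plays only a technical role: it ensures that $G_0$ (and hence $\nu$) is non-lattice, so that the $\Delta$-subexponentiality characterization is meaningful for every window $\Delta=(0,c]$ without the need for lattice-specific adjustments.
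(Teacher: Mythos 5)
Your proof is correct and takes essentially the same route as the paper: the paper's own proof is a one-line reduction to the positive-half parts of Theorem \ref{thm:cp:twosided} (applied to the compound Poisson representation of $\pi$ with intensity $B$ and jump law $B^{-1}\nu$) and Theorem \ref{theoremcompound} (with $a=1$ and $G=G_0$), which is precisely the bookkeeping you carry out.
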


\subsection{Randomly stopped sums}
Let $\{X_n\}_{n=1}^\infty$ and $S_n,\,n\ge 0$ be respectively the random sequence and 
its partial sum as defined above. 
Independently we take a counting r.v. $\tau$ on $\Z_+$. Then the distribution $S_\tau$ is given by 
\begin{align}
\label{def:dist:random:sums}
 F^{\ast \tau} := \sum_{n=0}^\infty \P(\tau=n) F^{\ast n},
\end{align}
where $F^{\ast 0}=\delta_0$. The following is a two-sided extension of \cite[Theorem 3.1]{Foss:Korshunov:Zachary:2013}. 
\begin{theorem}
\label{thm:random:sums}
 Suppose $F\in \call_\Delta$ and $\E [\tau] <\infty$. \\
$(\mathrm{i})$ If $F\in \cals_\Delta \cap \cald_\Delta$ and $\E[(1+\delta)^{\tau}]<\infty$ for some $\delta>0$, then 
\begin{align}
\label{eq:random:sums}
\P(S_\tau \in x+\Delta) /F(x+\Delta) \to \E[\tau]\quad \text{as}\quad x\to\infty. 
\end{align}
$(\mathrm{ii})$ If $\P(\tau>1)>0$ and further the relation \eqref{eq:random:sums} holds, then $F\in \cals_\Delta$. 
\end{theorem}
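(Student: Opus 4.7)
For part (i), the plan is a Kesten-bound plus dominated-convergence argument. Expand
\begin{equation*}
\frac{\P(S_\tau\in x+\Delta)}{F(x+\Delta)}=\sum_{n=0}^{\infty}\P(\tau=n)\frac{F^{\ast n}(x+\Delta)}{F(x+\Delta)},
\end{equation*}
noting that the $n=0$ summand vanishes for $x$ large since $\delta_0(x+\Delta)=0$. By Proposition \ref{prop:asympt:equiv3}, since $F\in\cals_\Delta\cap\cald_\Delta$, each ratio $F^{\ast n}(x+\Delta)/F(x+\Delta)$ converges to $n$ as $x\to\infty$. Kesten's bound (Proposition \ref{prop:kesten}) supplies the uniform majorant $F^{\ast n}(x+\Delta)\le c(\vep)(1+\vep)^n F(x+\Delta)$ for any $\vep>0$ and $x$ large; choosing $\vep\in(0,\delta)$ guarantees $\sum_n\P(\tau=n)(1+\vep)^n\le\E[(1+\delta)^\tau]<\infty$, and dominated convergence then delivers the limit $\sum_n\P(\tau=n)\,n=\E[\tau]$.

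Part (ii) requires a Fatou-style equality argument. The first ingredient is the standard lower bound $\liminf_{x\to\infty} F^{\ast n}(x+\Delta)/F(x+\Delta)\ge n$ for every $n\ge 1$, valid whenever $F\in\call_\Delta$; in the two-sided case, the $n=2$ estimate follows by splitting
\begin{equation*}
F^{\ast 2}(x+\Delta)\ge\int_{[-A,A]}F(x+\Delta-y)F(dy)+\int_{(x-A,\,x+A]}F(x+\Delta-y)F(dy),
\end{equation*}
(disjoint for $x>2A$) and exploiting symmetry together with the uniform long-tail property to show each piece is $\sim F(x+\Delta)(F(A)-F(-A))$ before letting $A\to\infty$; higher $n$ come by induction. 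Setting $a_n(x):=F^{\ast n}(x+\Delta)/F(x+\Delta)$ and applying Fatou to $\sum_{n\ge 1}\P(\tau=n)a_n(x)\to\E[\tau]$ yields
\begin{equation*}
\E[\tau]\ge\sum_{n\ge 1}\P(\tau=n)\liminf_x a_n(x)\ge\sum_{n\ge 1}\P(\tau=n)\,n=\E[\tau],
\end{equation*}
so equality must hold throughout and $a_n(x)\to n$ for every $n$ with $\P(\tau=n)>0$. Because $\P(\tau>1)>0$, some $n_0\ge 2$ satisfies $a_{n_0}(x)\to n_0$.

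The main obstacle is then passing from $a_{n_0}(x)\to n_0$ for one $n_0\ge 2$ to $F\in\cals_\Delta$, since Remark \ref{rem:conv:root} warns that $\cals_\Delta$ is not in general closed under convolution roots. If $n_0=2$ the conclusion is immediate. For $n_0\ge 3$ I would verify the $\cals_\Delta$ characterization in Lemma \ref{lem:ifandonlyif:sub:delta} for $F$ directly, expressing the tail conditions \eqref{ifandonlyif:sub:delta:real} and \eqref{ifandonlyif:sub:delta:common} in terms of the decomposition $F^{\ast n_0}=F\ast F^{\ast(n_0-1)}$ and using the exact asymptotic $a_{n_0}\to n_0$ together with the lower bounds $\liminf a_k\ge k$ for $k<n_0$ to propagate the equality down to $a_2$. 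The most delicate point in the two-sided case is controlling the left-tail integral $\int_{(-\infty,-\alpha(x)]}F(x+\Delta-y)F(dy)$, absent from the positive-half proof of \cite[Theorem 3.1]{Foss:Korshunov:Zachary:2013}, which must be extracted from the full symmetric decomposition of $F^{\ast n_0}$ rather than by appealing to positivity.
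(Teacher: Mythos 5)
Your approach for both parts matches what the paper intends (the paper just points to \cite[Theorem 3.1]{Foss:Korshunov:Zachary:2013} and replaces the one-sided Kesten bound by Proposition \ref{prop:kesten}), and part (i) is correct exactly as you wrote it. A few points deserve attention in part (ii).

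First, a genuine (though fixable) gap: your Fatou chain
\begin{equation*}
\E[\tau]\ge\sum_{n\ge 1}\P(\tau=n)\liminf_x a_n(x)\ge\sum_{n\ge 1}\P(\tau=n)\,n=\E[\tau]
\end{equation*}
only forces $\liminf_x a_n(x)=n$; it does not, by itself, give the full limit $a_n(x)\to n$. You need the standard one-term truncation
\begin{equation*}
\P(\tau=N)\limsup_x a_N(x)\le \lim_x\sum_n\P(\tau=n)a_n(x)-\liminf_x\sum_{n\neq N}\P(\tau=n)a_n(x)\le\E[\tau]-\sum_{n\neq N}\P(\tau=n)\,n=N\P(\tau=N),
\end{equation*}
which uses Fatou on the complementary sum and pins down $\limsup_x a_N(x)\le N$. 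This is routine, but the phrase ``so equality must hold throughout and $a_n(x)\to n$'' elides it.

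Second, the ``propagate the equality down to $a_2$'' step is the right idea and should be carried out directly via the superadditivity of $\ell_n:=\liminf_x a_n(x)$, namely $\ell_{k+m}\ge\ell_k+\ell_m$, valid under $F\in\call_\Delta$ alone by the disjoint-regions lower bound you already wrote. From $a_{n_0}\to n_0$ and $\ell_{n_0}\ge\ell_2+\ell_{n_0-2}\ge\ell_2+(n_0-2)$ one reads off $\ell_2\le 2$, hence $\ell_2=2$; a subsequence argument with the same lower bound then forces $\limsup_x a_2(x)\le 2$, giving $F\in\cals_\Delta$ directly. Going through Lemma \ref{lem:ifandonlyif:sub:delta} is an unnecessary detour.

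Third, the worry about the left-tail integral $\int_{(-\infty,-\alpha(x)]}F(x+\Delta-y)F(dy)$ is misplaced in part (ii). That integral only needs to be controlled when one verifies the $\cals_\Delta$ criterion of Lemma \ref{lem:ifandonlyif:sub:delta}; the propagation argument sketched above uses only lower bounds on $F^{\ast n}(x+\Delta)$ obtained by restricting the convolution integral to two disjoint favourable regions, so the left tail simply never needs separate treatment. Thus part (ii) is genuinely dimension-free, and the only ingredient that changes between the one- and two-sided cases is the Kesten bound used in part (i), as the paper indicates.
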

The proof is quite similar to that of \cite[Theorem 3.1]{Foss:Korshunov:Zachary:2013} 
if one uses the two-sided Kesten bound (Proposition \ref{prop:kesten}), and thus it is omitted.

\section{Proofs}
Let $c$ be a generic positive constant whose value is not of interest. Notice that 
we abuse this $c$ also for the definition of $\Delta$ and we make cautions where the double use  
may possibly cause confusion.


\begin{proof}[Proof of Lemma \ref{lem:equiv:ani:function}]
Obviously the a.n.i. properties imply \eqref{lem:ani:function}, and we show the 
converse. We observe that for sufficiently large $x>0$, 
\[
 1 \le \frac{\sup_{t\ge x}\alpha(t)}{\alpha(x)} = \sup_{t\ge x} \frac{\alpha(t)}{\beta(x)}\frac{\beta(x)}{\alpha(x)} 
\le \sup_{t\ge x}\frac{\alpha(t)}{\beta(t)} \frac{\beta(x)}{\alpha(x)} \to 1
\]
as $x\to\infty$. Moreover, since $\alpha(x)$ is positive on $[x_0,\infty)$ for some $x_0>0$, and 
$\alpha(x) \to 0$ as $x\to\infty$, there exists $y_x \le x$ such that 
\[
 1 \ge \frac{\inf_{x_0 \le t \le x}\alpha(t)}{\alpha(x)} = \inf_{y_x\le t \le x} \frac{\alpha(t)}{\beta(x)}\frac{\beta(x)}{\alpha(x)} 
\ge \inf_{y_0 \le t \le x} \frac{\alpha(t)}{\beta(t)} \frac{\beta(x)}{\alpha(x)},
\]
where $y_0\in (x_0,y_x]$ is arbitrary. 
Letting $x\to\infty$, so that one may let $y_x\to \infty$ and then $y_0 \to\infty$, the right converges to $1$ and we obtain the result.
It is known that $f\in\call$ implies that $f(x)\to 0$ as $x\to \infty$ (see \cite[p.76]{Foss:Korshunov:Zachary:2013}). 
Next we see that $F(x+\Delta)\to 0$ as $x\to \infty$. 
But this follows from  
\[
 \sum_{n=-\infty}^{\infty} F(x+nc+\Delta) =1,\quad x>0
\]
together with the Cauchy criterion for summability. 
The final result is trivial if we take arbitrary $\Delta$ in the previous argument. 
\end{proof}
\begin{proof}[Proof of Lemma \ref{lem:ldel+ani}]
Take a sufficiently large $x_0>0$ such that $\inf_{t\in [x_0,x]} f(t)\sim f(x)$. 
For any $y\in \R$ 
\begin{align*}
 \frac{f(x+y)}{f(x)} &= \frac{\sup_{t\ge x+y}f(t)}{\inf_{s\in [x_0,x]}f(s)} \frac{f(x+y)}{\sup_{t\ge x+y}f(t)}
\frac{\inf_{s\in [x_0,x]}f(s)}{f(x)} \\
& \ge \frac{F(x+y+\Delta)}{F(x-c+\Delta)} \frac{f(x+y)}{\sup_{t\ge x+y}f(t)}
\frac{\inf_{s\in [x_0,x]}f(s)}{f(x)}. 
\end{align*}
Moreover, 
\begin{align*}
 \frac{f(x+y)}{f(x)} &= \frac{\inf_{t\in [x_0,x+y]}f(t)}{\sup_{s\ge x}f(s)} \frac{f(x+y)}{\inf_{t\in [x_0,x+y]}f(t)}
\frac{\sup_{s\ge x}f(s)}{f(x)} \\
& \le \frac{F(x+y-c+\Delta)}{F(x+\Delta)}\frac{f(x+y)}{\inf_{t\in [x_0,x+y]}f(t)}
\frac{\sup_{s\ge x}f(s)}{f(x)}. 
\end{align*}
Now since $F \in \call_{\Delta}$ and $f \in \cala$, by taking $\lim_{x\to\infty}$ in both inequalities, we have 
\[
 1=\lim_{x\to\infty} \frac{F(x+y+\Delta)}{F(x-c+\Delta)} \le \lim_{x\to\infty}  \frac{f(x+y)}{f(x)} \le 
\lim_{x\to\infty}  \frac{F(x+y-c+\Delta)}{F(x+\Delta)} =1. 
\] 
Thus we have $f\in \call$. 
\end{proof}

\begin{proof}[Proof of Proposition \ref{prop:sdel+ani}]
Since the assertion of $\cals_{loc}$ follows from that of $\cals_{\Delta}$, we only prove the result for $\cals_{\Delta}$. 
It suffices to see that $F_+\in \cals_\Delta \Rightarrow F \in \cals_\Delta$. Although 
we can use $F_+(x+\Delta)\asymp F^+(x+\Delta)$ together with \cite[Lemma 4.20]{Foss:Korshunov:Zachary:2013} for the proof, 
we give a direct proof for the latter use. Notice that the proof of  \cite[Lemma 4.20]{Foss:Korshunov:Zachary:2013} was omitted 
by similarity. 


We observe that 
\[
 F_+(x) = (F(x)-F(0-))/\ov F(0-), 
\]
where $F(x-)=\lim_{y\uparrow x}F(y)$, so that $\ov F(0-)=1-F(0-)$ and 
\begin{align}
\label{eq:subexpo:F+}
 F_+^{\ast 2}(x+\Delta) &= \frac{1}{\ov F(0-)^2} \Big(
\int_{[0,x]} F(x+\Delta -y) F(dy) \\
&\hspace{2.5cm} +\int_{(x,x+c]} (F(x+c-y)-F(0-)) F(dy)
\Big). \nonumber  
\end{align}
The condition $F_+ \in \cals_\Delta$ implies that 
\begin{align}
\label{eq:subexpo:F+:F}
 \frac{F_+^{\ast 2}(x+\Delta)}{F_+(x+\Delta)} 
= \frac{F_+^{\ast 2}(x+\Delta) \ov F(0-) }{F(x+\Delta)} 
\to 2
\end{align}
as $x\to\infty$. Noticing \eqref{eq:subexpo:F+} we write 
\begin{align*}
  \frac{F^{\ast 2}(x+\Delta)}{F(x+\Delta)} & = \int_{(-\infty,\infty)}
 \frac{F(x+\Delta-y)}{F(x+\Delta)} F(dy) \\
& = \frac{F_+^{\ast 2}(x+\Delta) \ov F(0-)^2}{F(x+\Delta)} +2 \int_{(-\infty,0)} \frac{F(x+\Delta -y)}{F(x+\Delta)} F(dy), 
\end{align*}
where we exploit the identity by Fubini:  
\begin{align}
\begin{split}
\label{formula:fubini}
 \int_{(x,\infty)} F(x-y)F(dy) &= 
\int_{\R^2} {\bf 1}_{\{x< y\le x-s\}}(F\times F)(d(s,y)) \\
&= \int_{(-\infty,0)} F(x-s)F(ds) -F(x)F(0-). 
\end{split}
\end{align}
Since we have \eqref{eq:subexpo:F+:F}, it suffices to show that the second integral term 
converges to $2 F(0-)$. Since $F \in \call_\Delta$, 
there exists a function $\alpha$ such that $\alpha(x)<x/2$, 
$\alpha (x)\to\infty$ as $x\to\infty$ and 
$F(x+\Delta)$ is $\alpha$-insensitive. Then 
\[
 2\int_{(-\alpha(x),0)} \frac{F(x+\Delta-y)}{F(x+\Delta)} F(dy) \to 2 F(0-),\quad \text{as}\quad x\to\infty,
\]
while by $F\in \cald_\Delta$, 
\[
 2 \int_{(-\infty,-\alpha(x)]} \frac{F(x+\Delta-y)}{F(x+\Delta)} F(dy) \le  2 K \int_{(-\infty,-\alpha(x)]} F(dy) \to 0\quad \text{as}\quad x\to\infty. 
\]
Now we obtain the desired result. 
\end{proof}

\begin{proof}[Proof of Proposition \ref{prop:sac+ani}]
In view of Proposition \ref{prop:sdel+ani} and $\cals_{\Delta} \supset \cals_{loc} \supset \cals_{ac}$ it suffices 
to show that $F_+\in \cals_{\Delta} \Rightarrow F \in \cals_{ac}$. We observe by Lemma 
\ref{lem:ldel+ani} that $f_+ \in \call$, so that $f_+^{\ast 2} \in \call$. Then the uniform convergence property of $\call$ yields 
\[
 F_+ (x+\Delta) \sim c f_+(x)\ \text{and}\ F_+^{\ast 2}(x+\Delta) \sim cf_+^{\ast 2}(x). 
\]
Hence, $f_+\in \cals$ if and only if $F_+\in \cals_{\Delta}$. Finally, by the a.n.i. property $f_+\in \cals$ implies $f \in \cals$. 
The remaining result is trivial if we put $F=F_+$. 
\end{proof}


\begin{proof}[Proof of Lemma \ref{lem:ifandonlyif:sub:delta}]
$(\mathrm{i})\Rightarrow (\mathrm{ii})$. 
Define the event $B=\{\xi_1+\xi_2 \in x+\Delta\}$. We have 
\begin{align}
 \P(B) 
\label{eq:decomp:b} =& 2\P\big(B,\,-\alpha(x)< \xi_1 \le \alpha(x)\big) + 2 \P\big(B,\,\xi_1 \le -\alpha(x)\big)\\
&\quad +\P\big(B,\,\xi_1 > \alpha(x),\,\xi_2 > \alpha(x)\big). \nonumber 
\end{align}
By Fatou's lemma 
\begin{align}
\label{liminf:b}
 \liminf_{x\to\infty} \frac{\P\big(B,\,-\alpha(x) < \xi_1\le \alpha(x)\big)}{F(x+\Delta)} 
= \liminf_{x\to \infty} \int_{(-\alpha(x),\alpha(x)]} \frac{F(x+\Delta-y)}{F(x+\Delta)} F(dy) \ge 1.
\end{align}
Hence from \eqref{eq:decomp:b} and \eqref{liminf:b} and the definition of $\cals_\Delta$, we obtain \eqref{ifandonlyif:sub:delta:common}. \\
$(\mathrm{ii})\Rightarrow(\mathrm{iii})$. This part is trivial since the condition $F\in \call_\Delta$ implies the existence of a function $\alpha$ 
w.r.t. which $F(x+\Delta)$ is $\alpha$-insensitive. \\
$(\mathrm{iii})\Rightarrow (\mathrm{i})$. 
We again use the decomposition \eqref{eq:decomp:b} for $B$ as defined above. Then 
\begin{align*}
 \P\big(B,-\alpha(x)< \xi_1 \le \alpha(x)\big) &= \int_{(-\alpha(x),\alpha(x)]}F(x-y+\Delta)F(dy) \\
& \sim F(x+\Delta) \int_{(-\alpha(x),\alpha(x)]} F(dy) \\
& \sim F(x+\Delta)
\end{align*}
and so \eqref{ifandonlyif:sub:delta:real} and \eqref{ifandonlyif:sub:delta:common} together with \eqref{eq:decomp:b} 
imply $\cals_\Delta$ of $F$. 

If $F$ is a distribution on $\R_+$, then $\P\big(B,\,\xi_1 \le -\alpha(x)\big)=0$ and \eqref{ifandonlyif:sub:delta:real} is 
automatically satisfied. For the last assertion, we observe that  
\begin{align*}
& \P(\xi_1+\xi_2 \in x+\Delta,\,\xi_1>\alpha(x),\,\xi_2>\alpha(x)) \\
& =\E\big[
\E[
{\bf 1}_{\{\alpha(x)\vee (x-\xi_2) <\xi_1 \le x+c-\xi_2\}} \mid \xi_2
] {\bf 1}_{\{\xi_2>\alpha(x)\}}
\big] \\
& = \int_{(\alpha(x),x-\alpha(x)]} F(x+\Delta -y)F(dy) +\int_{(x-\alpha(x),x+c-\alpha(x)]}\big(
F(x+c-y)-F(\alpha(x))
\big) F(dy).
\end{align*}
The second integral has the bound 
\[
 F(\alpha(x)+\Delta)F(x+\Delta-\alpha(x)) =o(F(x+\Delta)),
\]
where we notice that $F(x+\Delta)$ is $\alpha$-insensitive and $F(\alpha(x)+\Delta)\to 0$ by the integrability of $F$. 
Thus \eqref{ifandlonly:sub:delta+:int} implies \eqref{ifandonlyif:sub:delta:common}.  
\end{proof}

\begin{proof}[Proof of Proposition \ref{prop:factrization:delta}]
Let $z\in \R$. For sufficiently large $d>0$ 
\begin{align*}
 H(x+\Delta+z) &\ge \int_{[z,x+z-d]} F(x+\Delta+z-y)G(dy) \\
&\ge \inf_{y \in [d,x]}F(y+\Delta) G([z,x+z-d]).
\end{align*} 
Since $F\in \cala_\Delta$, 
\begin{align*}
 \liminf_{x \to \infty} \frac{H(x+\Delta)}{F(x+\Delta)} 
&= \lim_{z\to -\infty} \liminf_{x\to\infty} \frac{H(x+\Delta)}{H(x+\Delta+z)} 
\frac{H(x+\Delta+z)}{\inf_{y\in [d,x]} F(y+\Delta)} \frac{\inf_{y\in [d,x]}F(y+\Delta)}{F(x+\Delta)} \\
& \ge \lim_{z\to-\infty}G([z,\infty))=1, 
\end{align*}
which implies 
\begin{align}
\label{pf:limsup}
 \limsup_{x \to\infty} \frac{F(x+\Delta)}{H(x+\Delta)}\le 1. 
\end{align}
We will prove 
\begin{align}
\label{pf:liminf}
 \liminf_{x\to\infty} \frac{F(x+\Delta)}{H(x+\Delta)}\ge 1. 
\end{align}
Let $\xi_1$ and $\xi_2$ be r.v.'s whose distribution functions are respectively given by $G$ and $F$, and define the event 
$B=\{\xi_1+\xi_2 \in x+\Delta\}$. 
We take a function $\alpha(x)$ such that $\alpha(x)\to \infty$, $0<\alpha(x)<x/2$ and $H$ is $\alpha$-insensitive.
We study 
\begin{align*}
 \P(B) &= \P\big(B,\,\xi_1\le \alpha(x)\big) +\P\big(B,\,\xi_2 \le \alpha(x)\big) +\P\big(B,\, \xi_1>\alpha(x),\,\xi_2>\alpha(x)\big). 
\end{align*}
Write 
\begin{align*}
 \P(B,\,\xi_1\le \alpha(x)) &= \Big( \int_{(-\infty,-\alpha(x))} +\int_{[-\alpha(x),\alpha(x)]} \Big) F(x+\Delta-y)G(dy) \\
&=: I_{1}(x)+I_{2}(x).
\end{align*}
Since $F \in \cala_\Delta$ 
and \eqref{pf:limsup}, we have 
\begin{align}
\label{lim:I11/H}
 \frac{I_{1}(x)}{H(x+\Delta)} \le \frac{\sup_{y\le 0}F(x+\Delta-y)}{F(x+\Delta)} 
\frac{F(x+\Delta)}{H(x+\Delta)}G(-\alpha(x)) \to 0
\end{align}
as $x\to \infty$. For the second integral, write 
\begin{align*}
 \frac{I_{2}(x)}{H(x+\Delta)} \le \frac{H(x+\Delta-\alpha(x))}{H(x+\Delta)} \frac{F(x+\Delta-\alpha(x))}{H(x+\Delta-\alpha(x))}
\frac{\sup_{y\le \alpha(x)}F(x+\Delta-y)}{F(x+\Delta-\alpha(x))}G([-\alpha(x),\alpha(x)]). 
\end{align*}
The terms other than $F(x+\Delta-\alpha(x))/H(x+\Delta-\alpha(x))$ converge to $1$, and we obtain 
\begin{align}
\label{liminf:I12/H}
 \liminf_{x\to\infty} \frac{I_{2}(x)}{H(x+\Delta)} \le \liminf_{x\to\infty} \frac{F(x+\Delta)}{H(x+\Delta)}.
\end{align}
When $H \in \cald_\Delta$, we observe that 
\begin{align*}
 \frac{\P(B,\xi_2 \le \alpha(x))}{H(x+\Delta)} &= \int_{(-\infty,\alpha(x)]} \frac{G(x+\Delta-y)}{H(x+\Delta-y)}\frac{H(x+\Delta-y)}{H(x+\Delta)}F(dy) \\
& \le K\sup_{y\le \alpha(x)} \frac{G(x+\Delta-y)}{H(x+\Delta-y)}  
\frac{H(x+\Delta-\alpha(x))}{H(x+\Delta)}F(\alpha(x)),  
\end{align*}
where $K>0$ is a constant of $\cald_\Delta$.  
Since $G(x+\Delta)=o(H(x+\Delta))$ and $H\in \call_{\Delta}$, we obtain
\begin{align}
\label{limsup:I1/H}
 \limsup_{x\to\infty} \P(B,\xi_2 \le \alpha(x))/H(x+\Delta)=0.
\end{align} 
When $G(x+\Delta)=o(F(x+\Delta))$, we notice that for sufficiently large $x>0$ 
\begin{align*}
\frac{\P(B,\xi_2 \le \alpha(x))}{H(x+\Delta)} & = \int_{(-\infty,\alpha(x)]} \frac{G(x+\Delta-y)}{F(x+\Delta-y)} \frac{F(x+\Delta-y)}{F(x+\Delta-\alpha(x))} 
\frac{F(x+\Delta-\alpha(x))}{H(x+\Delta)}
F(dy) \\
&\le c \sup_{y\le \alpha(x)} \frac{G(x+\Delta-y)}{F(x+\Delta-y)}F(\alpha(x)),
\end{align*}
where we notice that $F\in \cala_\Delta$, $H \in \call_\Delta$ and \eqref{pf:limsup}. Thus \eqref{limsup:I1/H} again holds in this case. 

Finally, write 
\begin{align*}
& \P(B,\,\xi_1>\alpha(x),\,\xi_2 > \alpha(x)) \\
& = \E\big[
\E[
{\bf 1}_{\{\alpha(x)<\xi_1 \le x+c-\xi_2\}} -
{\bf 1}_{\{\alpha(x)<\xi_1 \le x-\xi_2\}} \mid \xi_2
] 
{\bf 1}_{\{\xi_2>\alpha(x)\}}
\big] \\
&=\int_{(\alpha(x),x-\alpha(x)]} G(x+\Delta-y)F(dy) +
\int_{(x-\alpha(x),x+c-\alpha(x)]}\big(G(x+c-y)-G(\alpha(x))\big)F(dy) \\
& \le \int_{(\alpha(x),x-\alpha(x)]}G(x+\Delta-y)F(dy) + G(\alpha(x)+\Delta)F(x+\Delta-\alpha(x)). 
\end{align*}
Since $H\in \call_\Delta$, \eqref{pf:limsup} and $G$ is integrable, the last term in the last line is $o(H(x+\Delta))$.
For the integral in the last line, let $\xi_3$ be the r.v. with distribution $H$ independent of $(\xi_1,\xi_2)$,
and we use the inequalities,
\begin{align*}
& \int_{(\alpha(x),x-\alpha(x)]}H(x+\Delta-y) F(dy) \\
& \le \P(\xi_2+\xi_3 \in x+\Delta,\,\xi_2>\alpha(x),\xi_3>\alpha(x)) \\
& \le \int_{(\alpha(x),x-\alpha(x)]} F(x+\Delta-y) H(dy)+F(\alpha(x)+\Delta)H(x+\Delta-\alpha(x)).
\end{align*} 
Then the integral is bounded as 
\begin{align*}
& \int_{(\alpha(x),x-\alpha(x)]} \frac{G(x+\Delta-y)}{H(x+\Delta-y)} H(x+\Delta-y) F(dy) \\
& \le \sup_{\alpha(x)<y \le x-\alpha(x)} \frac{G(x+\Delta -y)}{H(x+\Delta-y)} 
\Big(\int_{(\alpha(x),x-\alpha(x)]} H(x+\Delta-y) H(dy) \\
&\hspace{5.5cm}+ F(\alpha(x)+\Delta)H(x+\Delta-\alpha(x))
\Big),
\end{align*}
where in the final step we use the inequalities above and \eqref{pf:limsup}. 
The second quantity in the last line is $o(H(x+\Delta))$ by the integrability of $F$. 
Now since Lemma \ref{lem:ifandonlyif:sub:delta} $\mathrm(\mathrm{ii})$ together with $H\in \cals_\Delta$ 
implies \eqref{ifandlonly:sub:delta+:int}, we obtain $\P(B,\,\xi_1>\alpha(x),\,\xi_2 > \alpha(x))/H(x+\Delta)\to 0$ as $x \to\infty$. 
Now this together with \eqref{lim:I11/H}, \eqref{liminf:I12/H} and \eqref{limsup:I1/H} yields the desired result. 
\end{proof}

\begin{proof}[Proof of Proposition \ref{prop:convroot:sdel+ani}]
 Let $F\in \cala_\Delta$. Then, $F$ satisfies Assumptions A, B, and C of \cite[Theorem 4.1]{Watanabe:2021}, and  
 the result follows.
\end{proof}

\begin{proof}[Proof of Proposition \ref{prop:convroot:sdel+ani:two-side}]
Recall that $F_+$ is the conditional distribution on $\R_+$ and that 
$F^+(dx)= \ov F(0-) F_+(dx)+F(0-)\delta_0(dx)$. We further let $F_-$ be the conditional distribution 
on $(-\infty,0)$, so that $F(dx)=\ov F(0-) F_+(dx)+F(0-)F_-(dx)$.
Our strategy is to show 
$(F^+)^{\ast N}(x+\Delta)\sim F^{\ast N}(x+\Delta)$ and apply Propositions \ref{prop:convroot:sdel+ani} and \ref{prop:sdel+ani}. 
The key is Lemma \ref{lem:vepkax} below, which we will use several times.

We start by the following inequality. For an integer $k:1\le k\le N$, there exist $c_k>0$ and $a_k\in \R$ such that 
\begin{align}
\label{ineq:F+}
 F_{+}^{\ast k}(x+\Delta)\le c_k \ov F(0-) ^{-N}(F^{\ast N}(x+a_k+\Delta)+F^{\ast N}(x+a_k+c+\Delta)). 
\end{align}
If $F_{+}=0$, this is trivial and otherwise we put $F_{+}^{\ast(N-k)}(a_k+\Delta)=c_k^{-1}>0$ for some $a_k\in \R$. 
Then, \eqref{ineq:F+} follows from 
\[
 F_{+}^{\ast(N-k)}(a_k+\Delta) F_{+}^{\ast k}(x+\Delta)\le F_{+}^{\ast N}(x+a_k+2\Delta) \le \ov F(0-)^{-N}F^{\ast N}(x+a_k+2\Delta). 
\]
\begin{lemma}
\label{lem:vepkax}
 Suppose that $F^{\ast N}\in \cals_\Delta \cap \cald_\Delta$ and $F\in \cala_\Delta$. Then, for any integer $k:1\le k\le N$ 
we can take a constant $A>0$ and choose functions $\vep_{k}(A,x)\ge 0$ independent of $y$ such that for $x >k A$ and $y>0$ 
\[
 F_{+}^{\ast k}(x+\Delta)-F_{+}^{\ast k}(x+y+\Delta) \ge -\vep_k(A,x)
\]
and moreover, 
\[
 \lim_{A\to\infty} \limsup_{x\to\infty} \frac{\vep_{k}(A,x)}{F^{\ast n}(x+\Delta)}=0.
\]
\end{lemma}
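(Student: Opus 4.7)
I will argue by induction on $k$. Throughout, write $R(x):=F^{\ast N}(x+\Delta)$; since $F^{\ast N}\in\cals_\Delta\subset\call_\Delta$, one has $R(x+a)\sim R(x)$ for each fixed $a\in\R$, and the inequality \eqref{ineq:F+} combined with $\call_\Delta$ yields $F_+^{\ast k}(x+\Delta)=O(R(x))$ for every $1\le k\le N$.

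\textbf{Base case $k=1$.} Set $\vep_1(A,x):=\ov F(0-)^{-1}\bigl(\sup_{t\ge x}F(t+\Delta)-F(x+\Delta)\bigr)$. Then the required inequality $F_+(x+\Delta)-F_+(x+y+\Delta)\ge -\vep_1(A,x)$ holds trivially for every $y>0$. Because $F\in\cala_\Delta$ gives $\sup_{t\ge x}F(t+\Delta)\sim F(x+\Delta)$, we obtain $\vep_1(A,x)=o(F(x+\Delta))=o(R(x))$, which is stronger than the required double-limit (the bound has no genuine dependence on $A$ in this case).

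\textbf{Inductive step.} Assuming the claim up to $k-1$, use $F_+^{\ast k}=F_+^{\ast(k-1)}\ast F_+$ to write
\[
F_+^{\ast k}(x+y+\Delta)-F_+^{\ast k}(x+\Delta)=\int_0^\infty\bigl[F_+^{\ast(k-1)}(x+y-z+\Delta)-F_+^{\ast(k-1)}(x-z+\Delta)\bigr]F_+(dz),
\]
and split the integration at $z=x-(k-1)A$. On the interior $z\in[0,x-(k-1)A]$, the induction hypothesis applies at the point $x-z\ge(k-1)A$ to majorise the bracket by $\vep_{k-1}(A,x-z)\le\delta_{k-1}(A)R(x-z)$, where $\delta_{k-1}(A):=\sup_{s\ge(k-1)A}\vep_{k-1}(A,s)/R(s)\to 0$ by IH. Integrating gives $\delta_{k-1}(A)\cdot(F^{\ast N}\ast F_+)(x+\Delta)$, and Proposition~\ref{prop:asympt:equiv2} applied to $G_1=F^{\ast N}$, $G_2=F_+$, together with $F_+(x+\Delta)=O(R(x))$ from \eqref{ineq:F+}, yields $(F^{\ast N}\ast F_+)(x+\Delta)=O(R(x))$; hence the interior contribution is $o(R(x))$ in the double-limit sense.

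\textbf{Boundary term and main obstacle.} On the outer range $z\in(x-(k-1)A,\infty)$ the integrand is bounded above by $F_+^{\ast(k-1)}(x+y-z+\Delta)$, and the resulting integral equals $\P\bigl(X_k>x-(k-1)A,\;X_1+\cdots+X_k\in x+y+\Delta\bigr)$ with iid $X_i\sim F_+$. Conditioning on $X_k=u$ and the change of variable $v=x+y-u$ localises $F_+^{\ast(k-1)}(v+\Delta)$ to an interval of length at most $(k-1)A+c$; bounding $F_+^{\ast(k-1)}$ there by $O(1)$ reduces the boundary term to a multiple of the tail $F_+((x-(k-1)A,\infty))$. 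Using \eqref{ineq:F+} with $k=1$, together with the $\cald_\Delta$ property of $F^{\ast N}$ and the $\cala_\Delta$ property of $F$, this tail is controlled by a sum over shifts of $R$ that is $o(R(x))$ as $A\to\infty$, uniformly in $y>0$. Combining both pieces produces the required $\vep_k(A,x)$. The principal obstacle is precisely this uniform-in-$y$ control of the boundary tail: neither the Kesten bound (Proposition~\ref{prop:kesten}) nor \eqref{ineq:F+} is sharp enough alone, and one must combine the a.n.i.\ structure of single increments with the convolution structure to close the estimate.
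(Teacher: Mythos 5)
Your inductive step departs from the paper's decomposition, and the departure is where the argument breaks. The paper applies Fubini to the convolution $F_+^{\ast(n+1)}(x+\Delta)$ and cuts \emph{both} variables at a fixed level $A$, producing two "one-small" integrals over $u\in[0,A]$ plus a "both-large" piece $\P(X_1+X_2\in x+\Delta,\,X_1>A,\,X_2>A)$. Because the one-small integrals run over the bounded interval $[0,A]$, the induction hypothesis is only ever invoked at arguments $x-u$ with $u\in[0,A]$; these tend to $\infty$ with $x$, so the $\limsup_{s\to\infty}$ in the IH can be applied after first sending $x\to\infty$. You instead cut the single integral at $z=x-(k-1)A$, which has two consequences.

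First, on your interior $z\in[0,x-(k-1)A]$ the argument $x-z$ ranges all the way down to $(k-1)A$, and your bound requires the quantity $\delta_{k-1}(A):=\sup_{s\ge(k-1)A}\vep_{k-1}(A,s)/R(s)$ to vanish as $A\to\infty$. The IH only controls $\lim_{A\to\infty}\limsup_{s\to\infty}\vep_{k-1}(A,s)/R(s)$; for a fixed $A$ the sup over the half-line $[(k-1)A,\infty)$ also sees the "transient" range of $s$, and there is no reason the resulting sup is small. Relatedly, the appeal to Proposition~\ref{prop:asympt:equiv2} with $G_2=F_+$ is not licensed as stated, since that proposition requires the limit $G_2(x+\Delta)/F^{\ast N}(x+\Delta)\to c_2$ to exist, whereas \eqref{ineq:F+} gives only an $O$-bound.

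Second, and more seriously, the boundary piece $z>x-(k-1)A$ cannot be handled by bounding $F_+^{\ast(k-1)}(\cdot+\Delta)$ by $O(1)$. That bound reduces the boundary term to (a multiple of) $\ov F_+(x-(k-1)A)=\sum_{n\ge 0}F_+(x-(k-1)A+nc+\Delta)$. This is typically of strictly \emph{larger} order than $R(x)=F^{\ast N}(x+\Delta)$ --- for a Pareto-type $F$ with $\ov F(x)\asymp x^{-\alpha}$ one has $\ov F_+(x)\asymp x^{-\alpha}$ while $R(x)\asymp x^{-\alpha-1}$ --- and furthermore it \emph{increases} as $A\to\infty$, so the double limit goes the wrong way. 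Your closing sentence gestures at $\cald_\Delta$, $\cala_\Delta$, and "a sum over shifts of $R$", but no such combination can overcome the order mismatch. The paper closes this step by a genuinely different device: it introduces auxiliary independent variables $X_3\sim F_+^{\ast(N-1)}$ and $X_4\sim F_+^{\ast(N-n)}$ with $\P(X_3+X_4\in a+\Delta)=C>0$, so that $X_1+X_3$ and $X_2+X_4$ are each distributed as $F_+^{\ast N}$; this lifts the both-large probability to the $F^{\ast N}$ scale, where $F^{\ast N}\in\cals_\Delta$ supplies the vanishing via \eqref{ifandonlyif:sub:delta:common} of Lemma~\ref{lem:ifandonlyif:sub:delta} and $F^{\ast N}\in\cald_\Delta$ gives uniformity in $y$ (i.e.\ in $a'=a+y$). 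That padding-to-$N$-fold argument is the crux of the lemma and has no counterpart in your proposal. Your base case is fine, but the inductive step needs to be reorganized along the paper's lines.
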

\begin{proof}
The proof is by induction. 
For $k=1$ 
take sufficiently large $A>0$ such that for $x>A$ and $y>0$
\[
 F_{+}(x+\Delta)-F_{+}(x+y+\Delta)\ge -\vep(A)F_{+}(x+\Delta), 
\]
where $\vep(A)\ge 0$ and $\lim_{A\to \infty} \vep(A)=0$, which is possible by $F_+\in \cala_\Delta$. 
By \eqref{ineq:F+} and $F^{\ast N}\in \call_\Delta $
\[
 \limsup_{x\to\infty} \frac{F_{+}(x+\Delta)}{F^{\ast N}(x+\Delta)}<\infty. 
\]
Thus, the lemma holds with $k=1$. Assume that the assertions hold with $1\le k \le n \le N-1$ and we show 
that also with $k=n+1$.
Let $X_1,X_2,X_3$ and $X_4$ be independent r.v.'s with the distribution $F_{+},F_{+}^{\ast n},F_{+}^{\ast (N-1)}$
and $F_{+}^{\ast (N-n)}$, respectively. Let $\P(X_3+X_4\in a+\Delta)=C>0$ for some $a\in \R_+$. 
We evaluate $F_+^{\ast (n+1)}(x+\Delta)-F_+^{\ast (n+1)}(x+y+\Delta),\,x>(n+1)A,\ y>0,$
and obtain a convenient representation. 
We use the decomposition 
\begin{align*}
 F_+^{\ast (n+1)}(x+c) 
&=  \big(
\int_{[0,A]}+ \int_{(A,x+c-A]} \big) F_+^{\ast n}(x+c-u) F_+(du) \\
&\quad + \int_{[0,A]} (F_+(x+c-u)-F_+(x+c-A))F_+^{\ast n}(dy), 
\end{align*}
where we use Fubini as in \eqref{formula:fubini}, and derive 
\begin{align}
\label{f_+:n+1:delta}
 F_+^{\ast (n+1)}(x+\Delta) &= \int_{[0,A]} F_+^{\ast n}(x+\Delta-u)F_+(du) + \int_{[0,A]} 
F_+(x+\Delta-u)F_+^{\ast n}(du) \\
&\quad + \int_{(A,x-A]} F_+^{\ast n}(x+\Delta-u) F_+(du) \nonumber \\
&\quad + \int_{(x-A,x+c-A]}(F_+^{\ast n}(x+c-u)-F_+^{\ast n}(A)) F_+(du), \nonumber 
\end{align}
where the last two terms are equal to 
\begin{align}
\label{pf:identity}
 \P(X_1+X_2 \in x+\Delta,\,X_1>A,X_2>A)
\end{align}
by the same calculation as for $\P(B,\,\xi_1>\alpha(x),\,\xi_2 >\alpha(x))$ in the proof of Proposition \ref{prop:factrization:delta}. 
At there $(\alpha(x),G,F)$ are replaced with $(A,F_+^{\ast n},F_+)$.

Now applying  \eqref{f_+:n+1:delta} for $x>(n+1)A$ we have 
\begin{align*}
 & F_{+}^{\ast(n+1)}(x+\Delta)-F_{+}^{\ast (n+1)}(x+y+\Delta) \\
 & = \int_{[0,A]}(F_{+}^{\ast n}(x-u+\Delta)-F_{+}^{\ast n}(x-u+y+\Delta))F_{+}(du) \\
 &\quad + \int_{[0,A]}(F_{+}(x-u+\Delta)-F_{+}(x-u+y+\Delta))F_{+}^{\ast n}(du) \\
 &\quad + \P(X_1+X_2 \in x+\Delta, X_1>A,X_2>A)-\P(X_1+X_2 \in x+y+\Delta,X_1>A,X_2>A). 
\end{align*}
Here, since $X_1+X_2$ and $X_3+X_4$ are independent, the last quantity is bounded as 
\begin{align*}
 & \P(X_1+X_2 \in x+y +\Delta,X_1 >A, X_2>A) \\
 & \le C^{-1}\P(X_1+X_2 \in x+y+\Delta,X_1>A,X_2>A, X_3+X_4 \in a+\Delta) \\
 & \le C^{-1}\P(X_1+X_3+X_2+X_4 \in x+a'+2\Delta ,X_1+X_3>A,X_2+X_4>A)(=:I_3), 
\end{align*}
where $a'=a+y$ and thus 
\begin{align*}
 & F_{+}^{\ast (n+1)}(x+\Delta)- F_{+}^{\ast (n+1)}(x+y+\Delta) \\
 & \ge - \int_{[0,A]} \vep_n(A,x-u)F_{+}(du)- \int_{[0,A]}\vep_1(A,x-u)F_{+}(du)-I_3 \\
 &=: -I_1-I_2-I_3. 
\end{align*}
Notice that $F^{\ast N}(x-u+\Delta)/F^{\ast N}(x+\Delta)$ converges to $1$ uniformly in 
$u\in [0,A]$ as $x\to \infty$, and thus by the induction hypothesis 
\[
 \lim_{A\to \infty} \limsup_{x\to \infty} \frac{I_1}{F^{\ast N}(x+\Delta)} \le  
\lim_{A\to \infty} \limsup_{y\to \infty} \frac{\vep_n(A,y)}{F^{\ast N}(y+\Delta)}=0 
\]
and 
\[
 \lim_{A\to\infty} \limsup_{x\to\infty} \frac{I_2}{F^{\ast N}(x+\Delta)} \le 
\lim_{A\to \infty} \limsup_{y\to \infty} \frac{\vep_1(A,y)}{F^{\ast N}(y+\Delta)}=0. 
\]
Let $Y_1$ and $Y_2$ be independent and identical r.v.'s with the distribution $F^{\ast N}$.
By the similar identity as for \eqref{pf:identity} and $ F_{+}^{\ast N}(dx) \le  \ov F(0-) ^{-N}F^{\ast N}(dx)$ , we obtain 
\begin{align*}
 \frac{I_3}{F^{\ast N}(x+a'+\Delta)} &\le \ov F(0-) ^{-2N} \int_{[A,x+a'-A]} \frac{F^{\ast N}(x+a'+\Delta-u)}{F^{\ast N} (x+a'+\Delta)}F^{\ast N}(du) \\
&\quad + \ov F(0-) ^{-2N}
\int_{[A,x+a'-A]} \frac{F^{\ast N}(x+a'+c+\Delta-u)}{F^{\ast N} (x+a'+\Delta)}F^{\ast N}(du) \\
&\quad +  \ov F(0-) ^{-2N}\frac{F^{\ast N}(A+2\Delta) F^{\ast N}(x+a'+2\Delta-A)}{F^{\ast N}(x+a'+\Delta)}.
\end{align*}
Since $F^{\ast N}\in \cals_{\Delta}$, by 
 \eqref{ifandonlyif:sub:delta:common} of Lemma \ref{lem:ifandonlyif:sub:delta} the two integrals converge to $0$ as 
$x\to \infty$ and then $A\to\infty$ uniformly in $a'>a$. By the integrability condition, the last quantity also converges to $0$ 
as $x\to\infty$ and then $A\to\infty$ uniformly in $a'>a$. Thus by $F^{\ast N} \in \cald_{\Delta}$ $I_3$ is bounded by  $I_3'$ independent of $y>0$ and
\[
 \lim_{A\to\infty} \limsup_{x\to\infty} \frac{I_3'}{F^{\ast N}(x+\Delta)}=0.
\]
Now by letting $\vep_{n+1}(A,x)=I_1+I_2+I_3'$, we obtain the assertion for $k=n+1$.
\end{proof}

Now we proceed to the proof and for convenience we write $q=\ov F(0-)$. 
For $x>NA$ we observe that 
\begin{align*}
& (F^+)^{\ast N}(x+\Delta) - F^{\ast N}(x+\Delta) \\
& = \sum_{k=1}^{N-1}\binom{N}{k} q^k (1-q)^{N-k} \int_{(-\infty,0)} 
\big( F_{+}^{\ast k}(x+\Delta) -F_{+}^{\ast k}(x-y+\Delta)
\big)  F_-^{\ast(N-k)}(dy) \\
& \ge - \sum_{k=1}^{N-1}\binom{N}{k} q^k (1-q)^{N-k}\vep_k(A,x). 
\end{align*}
Thus, we see that 
\begin{align}
\label{liminf:F+:FN}
 \liminf_{x\to\infty} \frac{(F^+)^{\ast N}(x+\Delta)}{F^{\ast N}(x+\Delta)} 
\ge 1-\sum_{k=1}^{N-1}\binom{N}{k} q^k(1-q)^{N-k} \lim_{A\to\infty} \limsup_{x\to\infty} \frac
{\vep_k(A,x)}{F^{\ast N}(x+\Delta)}=1. 
\end{align}
Moreover, for $x>(N+1)A$ 
\begin{align*}
& (F^+)^{\ast N}(x+\Delta)-F^{\ast N}(x-A+\Delta) \\
& = \sum_{k=1}^N \binom{N}{k} q^k (1-q)^{N-k} \int_{(-\infty,0)}
\big(F_{+}^{\ast k} (x+\Delta)-F_{+}^{\ast k}(x-A-y+\Delta)\big) F_-^{\ast (N-k)}(dy) \\
& \le J_1+J_2, 
\end{align*}
where 
\[
 J_1=\sum_{k=1}^{N-1} \binom{N}{k} q^k (1-q)^{N-k} \int_{(-A,0)} \vep_k(A,x-A-y) F_-^{\ast (N-k)}(dy) + q^N \vep_{N}(A,x-A)
\]
and 
\[
 J_2= \sum_{k=1}^{N-1} 
\binom{N}{k} q^k (1-q)^{N-k} \int_{(-\infty,-A]} F_{+}^{\ast k}(x+\Delta) F_-^{\ast (N-k)}(dy).
\]
Note that $F^{\ast N} \in \cals_\Delta$ and that $F^{\ast N}(x-A+\Delta)/F^{\ast N}(x+\Delta)$ converges to $1$ 
as $x\to\infty$. We see from Lemma \ref{lem:vepkax} that 
\begin{align}
\label{lim:limsup:J1}
 \lim_{A\to\infty} \limsup_{x\to\infty} \frac{J_1}{F^{\ast N}(x+\Delta)} =0.
\end{align}
We obtain from \eqref{ineq:F+} and $F^{\ast N} \in \call_\Delta$ that 
\begin{align}
\label{lim:limsup:J2}
 \lim_{A\to\infty} \limsup_{x\to\infty} \frac{J_2}{F^{\ast N}(x+\Delta)}=0. 
\end{align}
Now in view of \eqref{liminf:F+:FN}, \eqref{lim:limsup:J1} and \eqref{lim:limsup:J2} we have 
\[
 \limsup_{x\to \infty} \frac{(F^+)^{\ast N}(x+\Delta)}{F^{\ast N}(x+\Delta)} \le \lim_{x\to\infty} 
\frac{F^{\ast N} (x-A+\Delta)}{F^{\ast N}(x+\Delta)} + \lim_{A\to\infty} \limsup_{x\to\infty} \frac{J_1+J_2}{F^{\ast N}(x+\Delta)}=1
\]
and thus $(F^+)^{\ast N}(x+\Delta)\sim F^{\ast N}(x+\Delta)$. 
By Proposition \ref{prop:asympt:equiv} this implies $(F^+)^{\ast N}\in \cals_\Delta$, 
so that $F^+ \in \cals_\Delta$ follows from Propositions \ref{prop:convroot:sdel+ani}. 
Since $F \in \cala_\Delta$ we see  
from Proposition \ref{prop:sdel+ani} that $F\in \cals_\Delta$. 
\end{proof}

\begin{proof}[Proof of Corollary \ref{prop:convroot:sden+ani}] 
 Let $F(dx):=f(x)dx$ and  $f\in \cala$. Then we have  $F\in \cala_\Delta$. 
Since $ f^{*N} \in  \cals\cap \cald$ implies $F^{*N} \in  \cals_{\Delta} \cap \cald_{\Delta}$, we see from Proposition \ref{prop:convroot:sdel+ani:two-side}
that $F \in  \cals_{\Delta}$. Hence we find from  Proposition \ref{prop:sac+ani} that  $f \in  \cals$.
\end{proof}

\begin{proof}[Proof of Proposition \ref{prop:asympt:equiv}]
Choose a function $\alpha(x)$ such that  $\alpha(x) < x/2$, 
$\alpha(x) \to \infty$  as $x \to \infty$ and the function
 $G(x+\Delta)$ is $\alpha$-insensitive. Let $ \zeta_1, \zeta_2$ be independent identical r.v.'s 
with the distribution $G$. By Lemma \ref{lem:ifandonlyif:sub:delta}, it suffices to prove \eqref{ifandonlyif:sub:delta:real} 
and \eqref{ifandonlyif:sub:delta:common}  
with $\xi_1,\xi_2$ and $F$ there replaced with $\zeta_1,\zeta_2$ and $G$ here. 
The proof of \eqref{ifandonlyif:sub:delta:common} is exactly the same as that of \cite[Proposition 4.22]{Foss:Korshunov:Zachary:2013}
 and is omitted.
We prove \eqref{ifandonlyif:sub:delta:common}. 
By the assumption,  we have 
\begin{align}
\begin{split}
\label{eq:pf:prop:convroot:sden+ani}
\P\big(\zeta_1+\zeta_2 \in x+\Delta,\zeta_1 \le -\alpha(x)\big)  &=\int_{(-\infty,-\alpha(x)]}G(x-y+\Delta)G(dy) \\
&\leq c \int_{(-\infty,-\alpha(x)]}F(x-y+\Delta)G(dy) \\
&= o(F(x+\Delta))= o(G(x+\Delta)).
\end{split}
\end{align} 
Thus we have proved the assertion.  
The proof of the final part is done by noticing that $F \in \cals_\Delta$ implies $F_+\in \cals_\Delta$ 
and applying \cite[Theorem 4.22]{Foss:Korshunov:Zachary:2013}. 
\end{proof}
   
\begin{proof}[Proof of Proposition \ref{prop:asympt:equiv2}]
Let $\zeta_1$ and $\zeta_2$ be independent r.v.'s with distributions $G_1$ and $G_2$, respectively.  
Let $\alpha$ be a function as in Lemma \ref{lem:ifandonlyif:sub:delta} and 
$F(x+\Delta)$ is $\alpha$-insensitive. Define the event $B=\{\zeta_1+\zeta_2 \in x+\Delta\}$. Then
\begin{align}
 \P(B)=& \P\big(B,\,-\alpha(x)\le \zeta_1 \le \alpha(x)\big)+\P\big(B,\,-\alpha(x)\le \zeta_2 \le \alpha(x)\big) \nonumber \\
       & + \P\big(B,\,\zeta_1 < -\alpha(x)\big)+\P\big(B,\,\zeta_2 < -\alpha(x)\big) + \P\big(B,\,\zeta_1 > \alpha(x),\,\zeta_2 > \alpha(x)\big). \nonumber 
\end{align}
By the dominated convergence theorem, we have
\begin{align*}
 & \lim_{x \to \infty}\frac{\P\big(B,\,-\alpha(x)\le \zeta_1 \le \alpha(x)\big)}{F(x+\Delta)} \\
   &= \int_{[-\alpha(x),\alpha(x)]} \lim_{x \to \infty}\frac{G_2(x-y +\Delta)}{ F(x-y+\Delta)}\frac{ F(x-y+\Delta)}{ F(x+\Delta)} G_1(dy)=c_2   
\end{align*}
and similarly 
\begin{align*}
 & \lim_{x \to \infty}\frac{\P\big(B,\,-\alpha(x)\le \zeta_2 \le \alpha(x)\big)}{F(x+\Delta)}=c_1.  
\end{align*}
By the same argument as  in \eqref{eq:pf:prop:convroot:sden+ani}, we obtain that
$$\P\big(B,\,\zeta_i < -\alpha(x)\big)=o(F(x+\Delta))\quad i=1,2. 
$$
By the same argument as that in the proof of \cite[Proposition 4.22]{Foss:Korshunov:Zachary:2013}, we see that
$$ \P\big(B,\,\zeta_1 > \alpha(x),\,\zeta_2 > \alpha(x)\big)=o(F(x+\Delta)).$$
Thus, \eqref{aymp:eqiv:doubleG} holds. 
The final assertion follows from Proposition \ref{prop:asympt:equiv2}.
\end{proof}


\begin{proof}[Proof of Proposition \ref{prop:kesten}] 
For a sequence of iid r.v.'s $\{\xi_n\}$ with distribution $F$, we put $S_n=\sum_{k=1}^n\xi_k$. 
For $x_0 \ge 0$ and $k \ge 1$ we write  
$$A_k:=A_k(x_0)=\sup_{x>x_0}\frac{F^{*k}(x+\Delta)}{F(x+\Delta)}.$$
Define the event $B=\{\xi_1+\xi_2 \in x+\Delta\}$. 
Let $\vep >0$ be any fixed constant. 

By Lemma \ref{lem:ifandonlyif:sub:delta},  there exist $x_0 >0$ and $C>0$ such that $x_0>2C$ and, for any $x > x_0$, 
$$\P(B,\xi_2 \le C)+\P(B,\xi_1>C,\xi_2>C) \le (1+\vep/2)F(x+\Delta).$$
Since we have 
$$\P(B,\xi_1 \le C) \le \P(B,\xi_2>x-C),$$
we observe that for any $x > x_0$,  
\begin{align*}
\P(B,\xi_2\le x-x_0)& \le \P(B,\xi_2\le x-C)  \\
&=\P(B)-\P(B,\xi_2>x-C)  \\
& \le \P(B)- \P(B,\xi_1 \le C) \\
&=\P(B,\xi_2 \le C)+\P(B,\xi_1>C,\xi_2>C), 
\end{align*}
so that we can choose $x_0 >0$ such that for any $x > x_0$, 
\begin{equation}
\label{ineq:pf:prop:kesten}
\P(B,\xi_2\le x-x_0) \le (1+\vep/2)F(x+\Delta).
\end{equation}
For any $n>1$ and $x > x_0$, we write 
\begin{align}
\P(S_n \in x+\Delta)&=\P(S_n \in x+\Delta,\xi_n\le x-x_0) +\P(S_n \in x+\Delta,\xi_n > x-x_0) \nonumber \\ 
&=: P_1(x)+P_2(x),\nonumber
\end{align}
where, by the choice \eqref{ineq:pf:prop:kesten} of $ x_0$ and by the definition of $A_{n-1}$,
\begin{align}
P_1(x)&=\int_{(-\infty,x-x_0]}\P(S_{n-1} \in x-y+\Delta)F(dy)  \nonumber \\
&\le A_{n-1}\int_{(-\infty,x-x_0]}F(x-y+\Delta)F(dy)  \nonumber \\
&= A_{n-1}\P(B,\xi_2\le x-x_0) \nonumber \\
&\le  A_{n-1}(1+\vep/2)F(x+\Delta).\label{ineq:p1}
\end{align}
Moreover, 
\begin{align}
P_2(x)&=\int_{(-\infty,x_0+c]}\P(\xi_n \in x-y+\Delta,\xi_n > x-x_0)\P(S_{n-1} \in dy)  \nonumber \\
& \le \sup_{-\infty<t \le x_0+c}F(x-t+\Delta). \nonumber 
\end{align}
Thus, if $x > 2x_0$, then
$$P_2(x) \le L_1F(x+\Delta),$$
where
$$ L_1:=\sup_{-\infty<t \le x_0+c,\, y > 2x_0}\frac{F(y-t+\Delta)}{F(y+\Delta)}.$$
If $x_0 <x \le 2x_0$, then $P_2(x) \le 1$ implies
$$\frac{P_2(x)}{F(x+\Delta)}\le \frac{1}{\inf_{x_0 <x \le 2x_0}F(x+\Delta)}=:L_2.$$
Since $F \in \call_{\Delta}\cap \cald_\Delta$, $L_1$ and $L_2$ are finite for $x_0$ sufficiently large. 
Put $L=\max(L_1,L_2)$ and then, for any $x > x_0$ 
\begin{equation}
\label{ineq:p2}
P_2(x) \le LF(x+\Delta).
\end{equation}
It follows from \eqref{ineq:p1} and \eqref{ineq:p2} that for $n>1$
$$A_n \le   A_{n-1}(1+\vep/2)+ L.$$
Now by induction, it follows that 
$$A_n \le  A_1(1+\vep/2)^{n-1} +L\sum_{k=0}^{n-2}(1+\vep/2)^k \le Ln(1+\vep/2)^{n-1},$$
which implies the result. 
\end{proof}

\begin{proof}[Proof of Proposition \ref{prop:decomp:delta}]
With the form $G$, we may write 
\begin{align}
\label{eq:pf:factgene}
 1=p_G F(x+\Delta)/H(x+\Delta) +(1-p_G) F \ast G_1(x+\Delta)/H(x+\Delta), 
\end{align}
and put 
\[
\ov C := \limsup_{x\to \infty}  F(x+\Delta)/H(x+\Delta)\quad \text{and}\quad \underline{C}:=\liminf_{x\to\infty}  F(x+\Delta)/H(x+\Delta), 
\]
which are well-defined since $H(x+\Delta)\ge p_G F(x+\Delta)$.

Assume the first condition ($H\in \cald_\Delta$). 
Take an insensitive function $\alpha$ for $H(x+\Delta)$ and consider 
\begin{align*}
 \frac{F \ast G_1(x+\Delta)}{H(x+\Delta)} &= \Big(
\int_{(-\infty,-\alpha(x))}+ \int_{[-\alpha(x),\alpha(x)]} +\int_{(\alpha(x),\infty) } 
\Big)  \frac{F(x+\Delta-y) }{H(x+\Delta)}G_1(dy) \\
& =: I_1(x)+I_2(x)+I_3(x). 
\end{align*}
By Fatou's lemma and $H(x+\Delta)\in\call$,  
\begin{align}
 \liminf_{x\to\infty} I_2(x) &= \liminf_{x\to\infty} \int_{[-\alpha(x),\alpha(x)]} 
\frac{F(x+\Delta-y)}{H(x+\Delta-y)} \frac{H(x+\Delta-y)}{H(x+\Delta)} G_1(dy) \label{ineq:pf:I2i} \\
& \ge \int_{\R} \liminf_{x \to\infty} \frac{F(x+\Delta-y)}{H(x+\Delta-y)}
{\bf 1}_{\{y\in[-\alpha(x),\alpha(x)]\}} G_1(dy) \ge \underline{C}, \nonumber
\end{align}
where we may take a continuous $\alpha(x)$ if needed. 
Again by Fatou's lemma 
\begin{align}
 \limsup_{x\to \infty} I_2(x) \le \int_{\R} \limsup_{x\to\infty}  \frac{F(x+\Delta-y)}{H(x+\Delta-y)}
{\bf 1}_{\{y\in[-\alpha(x),\alpha(x)]\}} G_1(dy)\le \ov C.
\label{ineq:pf:I2s}
\end{align}
Let $\zeta_1$ and $\zeta_2$ be independent r.v.'s with the distributions $F$ and $G_1$, respectively. Then we observe
\begin{align*}
 I_3(x) &=\frac{\P(\zeta_1+ \zeta_2 \in x+\Delta, \zeta_2 \ge\alpha(x)) }{H(x+\Delta)}  \le \frac{\P(\zeta_1+ \zeta_2 \in x+\Delta, \zeta_1 \le x+c-\alpha(x)) }{H(x+\Delta)}. 
\end{align*}
Since  $H(dx)\ge p_G F(dx)$ and  $G_1(x+\Delta)=o(H(x+\Delta))$ we have 
\begin{align}
 \limsup_{x\to\infty} I_3(x) &\le \limsup_{x\to\infty} \int_{(-\infty,x-\alpha(x)+c]}
\frac{G_1(x+\Delta-y)}{H(x+\Delta-y)} \frac{H(x+\Delta-y)}{H(x+\Delta)} p_G^{-1} H( dy)  \label{ineq:pf:I3} \\
& \le \limsup_{x \to \infty} \frac{G_1(x+\Delta)}{H(x+\Delta)}  c \limsup_{x\to \infty} \frac{H^{\ast 2}(x+\Delta)}{H(x+\Delta)} =0.  
\nonumber
\end{align}
By the al.d. property of $H(x+\Delta)$, it follows that 
\begin{align}
\limsup_{x\to\infty} I_1(x) &= \limsup_{x\to\infty} \int_{(-\infty,-\alpha(x))}
\frac{F(x+\Delta-y)}{H(x+\Delta-y)} \frac{H(x+\Delta-y)}{H(x+\Delta)}  G_1(dy)  \label{ineq:pf:I4} \\
& \le c \limsup_{x\to\infty} \int_{(-\infty,-\alpha(x))}  G_1(dy)  =0. \nonumber
\end{align}
Collecting 
\eqref{ineq:pf:I2i}-\eqref{ineq:pf:I4}, we obtain 
\begin{align}
 \liminf_{x\to\infty} \frac{F \ast G_1(x+\Delta)}{H(x+\Delta)} \ge \underline{C}\quad \text{and}\quad \limsup_{x\to\infty} 
\frac{F \ast G_1(x+\Delta)}{H(x+\Delta)} \le \ov{C}. 
\end{align}
Now taking $\liminf_{x\to\infty}$ and $\limsup_{x \to \infty}$ on both sides of \eqref{eq:pf:factgene}, we have 
\begin{align}
\label{ineq:pf:factgene}
 p_G \ov C +(1-p_G) \underline{C}\ \le  1\  \le\ p_G \underline{C} +(1-p_G) \ov C 
\end{align}
and thus 
\[
 0\le (1-2 p_G)(\ov C-\underline{C}).
\]
The assumption $p_G \in (2^{-1},1)$ implies $\ov C=\underline{C}$. Moreover from \eqref{ineq:pf:factgene}, $\ov C=\underline{C}=1.$ 
Then noticing $H(x+\Delta)\ge p_GF(x+\Delta)$ for all $x\in \R$, we have 
$F\in \cals_\Delta$ from Proposition \ref{prop:asympt:equiv}.  

For the proof under the second condition, we only indicate different points. 
The quantities $\ov C$ and $\underline{C}$ are the same. We work on another form 
\begin{align*}
 \frac{F\ast G_1(x+\Delta)}{H(x+\Delta)} &= 
\int_{[0,\alpha(x)]} \frac{F(x+\Delta-y)}{H(x+\Delta)} G_1(dy) \\
&\quad +\Big(
\int_{(\alpha(x),x+c]}
 \frac{F(x+c-y)}{H(x+\Delta)} G_1(dy) -\int_{(\alpha(x),x]} \frac{F(x-y)}{H(x+\Delta)} G_1(dy) \Big)\\
&=: J_1(x)+J_2(x). 
\end{align*}
Similarly as before we obtain 
\[
 \liminf_{x\to\infty} J_1(x)\ge \underline{C}\quad \text{and}\quad \limsup_{x\to\infty} J_1(x) \le \ov C. 
\]
For the $J_2(x)$, we use the following identity
\begin{align*}
 \int_{(\alpha(x),x]} F(x-y)G_1(dy) &= \int_{\R_+\times (\alpha(x),x]}{\bf 1}_{\{z \le x-y\}} (F\times G_1)(d(z,y)) \\
&= \int_{\R_+\times \R_+} {\bf 1}_{\{\alpha(x)<y \le x-z\}} {\bf 1}_{\{z< x-\alpha(x)\}} (F\times G_1)(d(z,y)) \\
& = \int_{ [0, x-\alpha(x)]} (G_1(x-z)-G_1(\alpha(x)) )F(dz), 
\end{align*} 
so that 
\begin{align*}
 J_2(x) &= \int_{[0,x-\alpha(x)]}\frac{G_1(x+\Delta-z)}{H(x+\Delta)} F(dz) + 
\int_{(x-\alpha(x),x+c -\alpha(x)]} \frac{G_1(x+c-z)-G_1(\alpha(x))}{H(x+\Delta)} F(dz) \\
&\le \int_{[0,x-\alpha(x)]}\frac{G_1(x+\Delta-z)}{H(x+\Delta-z)} \frac{H(x+\Delta-z)}{H(x+\Delta)} F(dz)
+ \frac{F(x+\Delta-\alpha(x))}{H(x+\Delta)} G_1(\alpha(x)+\Delta). 
\end{align*}
Since $H(dx)\ge p_G F(dx)$ and $G(\alpha(x)+\Delta)\to 0$ as $x\to \infty$, 
\[
 J_2(x)\le o(1) \int_{[0,x-\alpha(x)]} \frac{H(x+\Delta-y)}{H(x+\Delta)}H(dy)+o(1)=o(1).  
\]
The remaining proof is the same as before.
\end{proof}

\begin{proof}[Proof of Proposition \ref{prop:decomp:delta:Poisson}]
By similarity, we only give the proof under that $H \in \cald_\Delta$.
 We assume non-degeneracy for $G$, since otherwise the proof is obvious. 
Let $\lambda$ be the Poisson parameter of $G$. 
If $\lambda < \log 2$, then since $e^{-\lambda}>2^{-1}$ 
the result is immediate from Proposition 2.14.
If $\lambda \ge \log 2$, we take an integer $n$ such that $\lambda/n <\log 2$, and 
define a compound Poisson distribution 
\[
G_{1/n}(dx)= e^{-\lambda/n}\delta_0(dx)+(1-e^{-\lambda/n})G_{1/n,1}(dx)
\]
 such that $G= G_{1/n}^{\ast n}$. 
Notice that 
\[
G(x+\Delta) = \big(e^{-\lambda/n}\delta_0+ (1-e^{-\lambda/n})G_{1/n,1} \big)^{\ast n}(x+\Delta) \ge n e^{-\lambda(n-1)/n}(1-e^{-\lambda/n})G_{1/n,1}(x+\Delta) 
\]
and thus $G(x+\Delta)=o(H(x+\Delta))$ implies $G_{1/n}(x+\Delta)=o(H(x+\Delta))$. Now we apply Proposition 2.14  to 
$G_{1/n}\ast (G_{1/n}^{\ast (n-1)}\ast F)$ with $G_{1/n}$ be the negligible part, 
and obtain that $G_{1/n}^{\ast (n-1)}\ast F \in \cals_\Delta$ and $H(x+\Delta)\sim G_{1/n}^{\ast (n-1)}\ast F(x+\Delta)$.
Since $H \in \cald_\Delta$, so is $G_{1/n}^{\ast (n-1)}\ast F$ by Lemma \ref{lem:ald:symp:equiv}.
We iterate this step until we reach $F\in \cals_\Delta$ and $H(x+\Delta)\sim F(x+\Delta) $.
\end{proof}

\section{Proofs for Section \ref{sec:main}}
Note that we abuse the notation $c$, which has two meanings.
One is for $c$ of $\Delta$ and the other is arbitrary positive constant whose values are not of interest. 
This makes no confusion. 

\begin{proof}[Proof of Theorem \ref{thm:cp:twosided}]
We give the proof for the general two-sided case first, and then  
we indicate the difference for the positive-half case since the proof is quite similar. \\

\noindent
{\bf $[\,(\mathrm{i})$ implies $(\mathrm{ii})$ and $(\mathrm{iii})\,]$}\ 
We choose $c_1>0$ such that $\ov G(c_1)=\Lambda_1<\log 2/\lambda$ and define 
another compound Poisson by 
\begin{align}
\label{pf:def:cp:mu1}
 \wh \mu_1(z) =\exp \Big(
\lambda\Lambda_1 \int_{(c_1,\infty)} (e^{izx}-1)G_1(dy)
\Big),\quad G_1(x)=G(x)/\Lambda_1,
\end{align}
so that 
$
 \mu=\mu_1\ast \mu_2,\, \text{i.e.}\, \wh \mu(z)=\wh \mu_1(z) \wh \mu_2(z),
$
where 
\[
 \wh \mu_2(z)= \exp \Big(
\lambda\Lambda_2 \int_{(-\infty,c_1]} (e^{izx}-1)G(dy)/\Lambda_2
\Big),\quad \Lambda_2=G(c_1).
\]
Since $\mu_2$ has exponential moments of any orders (\cite[Theorem 25.3]{sato:1999}), we have for any $\gamma>0$  
\begin{align}
\label{deltasub:exp:moment}
 e^{\gamma x} \mu_2(x+\Delta) \le \int_x^{x+c} e^{\gamma y}\mu_2(dy) \to 0\quad \text{as}\ x\to\infty,
\end{align}
so that $\mu(x+\Delta)=o(e^{-\gamma x})$. Now by Proposition \ref{prop:decomp:delta:Poisson} with $(H,G,F)$ there replaced with 
$(\mu,\mu_2,\mu_1)$ here, we obtain 
\begin{align}
\label{relatoin:mu:mu1}
\mu_1\in \cals_\Delta \quad \text{and}\quad \mu(x+\Delta)\sim \mu_1(x+\Delta). 
\end{align}
This implies $\mu_1\in \cald_\Delta$ by Lemma \ref{lem:ald:symp:equiv}. 

We will see that $\mu_1\in \cals_\Delta\cap \cald_\Delta$ implies $G_1\in \cals_\Delta\cap \cald_\Delta$. 
Write 
\begin{align}
\label{pf:cp:def:mu1:mu10}
 \mu_1(dx)=e^{-\lambda\Lambda_1}\delta_0(dx)+(1-e^{-\lambda \Lambda_1})\mu_{10}(dx),
\end{align}
where 
\[
 \mu_{10}(dx)= (e^{\lambda \Lambda_1}-1)^{-1} \sum_{n=1}^\infty \big((\lambda \Lambda_1)^n/n! \big) G_1^{\ast n}(d x). 
\]
Consider the Laplace transform of $\mu_{10}$, 
\[
 L_{\mu_{10}}(z)= (e^{\lambda \Lambda_1 L_{G_1}(z)}-1)/(e^{\lambda\Lambda_1}-1),
\]
which yields,
\[
 \lambda \Lambda_1 L_{G_1}(z)= -\sum_{n=1}^{\infty}n^{-1}(1-e^{\lambda\Lambda_1})^n L_{\mu_{10}}^n(z). 
\]
Thus, by the inversion
\begin{align}
 \label{pf:cp:inversion:delta}
\lambda\Lambda_1 G_1(x+\Delta) = - \sum_{n=1}^\infty n^{-1}(1-e^{\lambda \Lambda_1})^n \mu_{10}^{\ast n}(x+\Delta). 
\end{align}
Take a sufficiently small $\vep>0$ such that $(e^{\lambda \Lambda_1}-1)(1+\vep)<1$. Since 
$\mu_{10}\in \cals_\Delta \cap \cald_\Delta$, by Proposition \ref{prop:kesten} there exists $C(\vep)$ such that 
$\mu_{10}^{\ast n}(x+\Delta)\le C(\vep)(1+\vep)^n \mu_{01}(x+\Delta)$ for $x$ sufficiently large. Applying 
the dominated convergence in \eqref{pf:cp:inversion:delta}, we obtain 
\begin{align}
\label{pf:asymp:g1:mu10}
 \lim_{x\to\infty} G_1(x+\Delta)/\mu_{10}(x+\Delta) = (1-e^{-\lambda \Lambda_1})/(\lambda \Lambda_1). 
\end{align}
Now correcting the results \eqref{pf:def:cp:mu1},\eqref{relatoin:mu:mu1},\eqref{pf:cp:def:mu1:mu10} and \eqref{pf:asymp:g1:mu10}, we see 
\[
 \lim_{x\to\infty} \frac{\mu(x+\Delta)}{G(x+\Delta)}=\lim_{x\to\infty}
\frac{\mu(x+\Delta)}{\mu_{10}(x+\Delta)}\frac{\mu_{10}(x+\Delta)}{G_1(x+\Delta)}\frac{G_1(x+\Delta)}{G(x+\Delta)}=\lambda. 
\]
By Proposition \ref{prop:asympt:equiv} and Lemma \ref{lem:ald:symp:equiv}, we obtain $G\in \cals_\Delta$ and $G\in \cald_\Delta$ respectively. 
Thus $(\mathrm{ii})$ and $(\mathrm{iii})$ are proved. \\

\noindent
{\bf $[\,(\mathrm{iii})$ implies $(\mathrm{ii})\,]$}\ 
In view of \eqref{def:cp:measure}, we write 
\[
 G^{\ast 2}(x+\Delta) =(2/\lambda^2) \big\{
e^{\lambda}\mu(x+\Delta)-\sum_{n\neq 2}^\infty (\lambda^n/n!)G^{\ast n}(x+\Delta)
\big\}. 
\]
Dividing this by $G(x+\Delta)$ and taking $\limsup_{x\to\infty}$ on both sides, we have by Fatou's lemma 
\begin{align*}
 \limsup_{x\to\infty} \frac{G^{\ast 2}(x+\Delta)}{G(x+\Delta)} &\le 
(2/\lambda^2) \Big\{
\lambda e^{\lambda} -\sum_{n\neq 2}^\infty (\lambda^n/n!)\liminf_{x\to\infty}\frac{G^{\ast n}(x+\Delta)}{G(x+\Delta)} 
\Big\} \\
&=(2/\lambda^2) \{
\lambda e^{\lambda}-\lambda \sum_{n\neq 1}^\infty \lambda^n/n!
\}=2,
\end{align*}
where we use \cite[Corollary 4.19]{Foss:Korshunov:Zachary:2013} together with $G \in \call_\Delta$. Again by \cite[Corollary 4.19]{Foss:Korshunov:Zachary:2013} with 
$n=2$, we obtain $G^{\ast 2}(x+\Delta)\sim 2 G(x+\Delta)$, which is $(\mathrm{ii})$. \\

\noindent
{\bf $[\,(\mathrm{ii})$ implies $(\mathrm{iii})$ and $(\mathrm{i})\,]$}\quad 
Notice that Proposition \ref{prop:kesten} holds with $G$. 
Thus applying $G^{\ast n}(x+\Delta)/G(x+\Delta)\to n$ as $x\to\infty$ and the 
dominated convergence to 
\[
 \frac{\mu(x+\Delta)}{G(x+\Delta)}= e^{-\lambda} \sum_{n=1}^\infty (\lambda^n/n!) \frac{G^{\ast n}(x+\Delta)}{G(x+\Delta)},
\]
we prove $(\mathrm{iii})$. Then a combination of $(\mathrm{ii})$ and $(\mathrm{iii})$ yields $\mu\in \cals_\Delta$ by Proposition \ref{prop:asympt:equiv}.
Here $\mu \in \cald_\Delta$ follows from that of $G$.

Next we proceed to the proof for the positive half case. As stated before we only indicate the difference. 
For the part [{\bf $(\mathrm{i})$ implies $(\mathrm{ii})$ and $(\mathrm{iii})$}], we change $\wh \mu_2 (z)$ to 
\[
 \wh \mu_2(z) =\exp \Big(
\lambda\Lambda_2 \int_{(0,c_1]} (e^{izy}-1)G(dy)/\Lambda_2 
\Big)\quad \text{with}\quad \Lambda_2=G(c_1).
\] 
Then using Proposition \ref{prop:decomp:delta:Poisson} for positive-half distributions, we obtain 
$\mu_1\in \cals_\Delta$. On the way for proving that $\mu_1\in \cals_\Delta$ implies $G_1\in \cals_\Delta$, 
we replace 
Proposition \ref{prop:kesten} with \cite[Theorem 4.25]{Foss:Korshunov:Zachary:2013}. Now 
if Proposition \ref{prop:asympt:equiv} is replaced by Theorem \cite[Theorem 4.22]{Foss:Korshunov:Zachary:2013} 
in the final part, the proof is completed. 
Nothing should be changed for the part [{\bf $(\mathrm{iii})$ implies $(\mathrm{ii})$}]. 
For the part [{\bf $(\mathrm{ii})$ implies $(\mathrm{iii})$ and $(\mathrm{i})$}], it suffices to replace Propositions  
\ref{prop:kesten} and \ref{prop:asympt:equiv} respectively with Theorems 4.25 and 4.22 in \cite{Foss:Korshunov:Zachary:2013}. 
\end{proof}

\begin{proof}[Proof of Theorem \ref{thm:ID:delta:subexponential}]
We decompose $\mu \in \id$ as
$
 \mu=\mu_1\ast \mu_2,\, \text{i.e.}\, \wh \mu(z)=\wh \mu_1(z)\, \wh \mu_2(z),
$
where 
\[
 \wh \mu_2(z)= \exp \Big(
 \int_{(-\infty,-1)} (e^{izx}-1)\nu(dy)
\Big) \]
is the ch.f. of a compound Poisson distribution. Observe that $\mu_2(x+\Delta) =o(e^{-\gamma x})$ and 
$$\int_{\R}e^{-\gamma x}\mu_1(dx) <\infty $$
  for any $\gamma >0$. Suppose that $(\mathrm{i})$ holds.
We obtain from Proposition \ref{prop:decomp:delta:Poisson} that $\mu_1 \in \cals_\Delta \cap \cald_\Delta$ and $\mu(x+\Delta)\sim\mu_1(x+\Delta)$. 
Define the condition  $(\mathrm{iii})'$ as 
\[
(\mathrm{iii})'\qquad  \nu_{(1)} \in \call_\Delta \cap \cald_\Delta\quad \text{and}\quad  \mu_1(x+\Delta)\sim \nu(x+\Delta). 
\]
We see from Theorem 1.2 of \cite{Watanabe:Yamamuro:2009} that the condition $\mu_1 \in \cals_\Delta \cap \cald_\Delta$ is 
equivalent to $(\mathrm{ii})$ and $(\mathrm{iii})'$. Since 
$\mu(x+\Delta)\sim\mu_1(x+\Delta)$ under $\mu_1 \in \cals_\Delta \cap \cald_\Delta$,  $(\mathrm{iii})'$ is equivalent to $(\mathrm{iii})$.
 Conversely, suppose the condition $\mu_1 \in \cals_\Delta \cap \cald_\Delta$ and then $(\mathrm{i})$ follows from  Proposition \ref{prop:asympt:equiv2}. 
\end{proof}

\begin{proof}[Proof of Theorem \ref{thm:ID:delta:subexponential:cp}]
{\bf $[\,(\mathrm{i})$ implies $(\mathrm{ii})$ and $(\mathrm{iii})\,]$}\    
Define $\mu_2 \in \id$ such that $\mu_2$ satisfies $\mu=\mu_{c_1}\ast \mu_2$.
By the same reasoning given as \eqref{deltasub:exp:moment}, 
$\mu_2(x+\Delta) =O(e^{-\gamma x})$.  
Then by Proposition \ref{prop:factrization:delta} with $(H,F,G)=(\mu,\mu_{c_1},\mu_2)$ we obtain 
$\mu_{c_1}\in \cals_\Delta$. Let $\mu_1$ be a compound Poisson with the L\'evy measure 
$\nu_{(1)}\nu((1,\infty))$ and 
write $\mu_1=\mu_3 \ast \mu_{c_1}$ where $\mu_3$ is a compound Poisson with the L\'evy measure $\nu_3(dx)= {\bf 1}_{\{1<x\le c_1\}}\nu(dx)$. 
Similarly as for $\mu_2$ we have 
$\mu_3(x+\Delta)=O(e^{-\gamma x})$ for any $\gamma>0$, so that $\mu_3(x+\Delta)=o(\mu_{c_1}(x+\Delta))$. 
Then by Proposition \ref{prop:asympt:equiv2} with $(F,G_1,G_2)=(\mu_{c_1},\mu_{c_1},\mu_3)$ and $(c_1,c_2)=(1,0)$, 
we obtain $\mu_1\in \cals_\Delta$ and $\mu_1(x+\Delta)\sim \mu_{c_1}(x+\Delta)$. Now since $\mu_1$ is a compound Poisson 
on $\R_+$, due to Theorem \ref{thm:cp:twosided}, $\nu_{(1)} \in \cals_\Delta$ and $\mu_1(x+\Delta) \sim \nu_{(1)}(x+\Delta)\nu((1,\infty))$. 
Thus $(\mathrm{ii})$ follows. In view of $\mu(x+\Delta)\sim \mu_{1}(x+\Delta)$ and 
$\nu(x+\Delta)\sim \nu_{(1)}(x+\Delta)\nu((1,\infty))$, we have already proved $(\mathrm{iii})$. 
Here $\nu_{(1)}\in \cald_\Delta$ follows from $\mu_{c_1} \in \cald_\Delta$ together with the 
asymptotic equivalence $\mu(x+\Delta)\sim \mu_1(x+\Delta)\sim \mu_{c_1}(x+\Delta)$ (cf. Lemma \ref{lem:ald:symp:equiv}).
\\

\noindent
{\bf $[\,(\mathrm{iii})$ implies $(\mathrm{ii})\,]$}\ 
The results immediately follow from Theorem \ref{thm:ID:delta:subexponential}.\\

\noindent
{\bf $[\,(\mathrm{ii})$ implies $(\mathrm{i})\,]$}\ 
Let $\nu_{(c_1)}(dx)= \nu_{c_1}(dx)/\nu((c_1,\infty))$. 
By Proposition \ref{prop:asympt:equiv} and $\nu_{(1)}(x+\Delta)\nu((1,\infty)) 
\sim \nu_{c_1}(x+\Delta)$, $\nu_{(c_1)}\in \cals_\Delta$ follows. 
Then applying $\nu_{(c_1)}^{\ast n}(x+\Delta)/\nu_{(c_1)}(x+\Delta)\to n$ as $x\to\infty$ together with Kesten bound 
(\cite[Theorem 4.25]{Foss:Korshunov:Zachary:2013}) we obtain by the dominated convergence that 
\begin{align*}
 \lim_{x\to\infty} \frac{\mu_{c_1}(x+\Delta)}{\nu_{(c_1)}(x+\Delta)} =e^{-\nu((c_1,\infty))} \sum_{n=0}^\infty
\frac{(\nu((c_1,\infty)))^n}{n!} \lim_{x \to \infty} \frac{\nu_{(c_1)}^{\ast n}(x+\Delta)}{\nu_{(c_1)}(x+\Delta)} = \nu((c_1,\infty)). 
\end{align*}
Thus $\mu_{c_1} \in \cals_\Delta$. Recall that $\mu_{c_1} \in \cala_\Delta$ by assumption and $\mu_2(x+\Delta)=O(e^{-\gamma x})$ for any $\gamma>0$. 
Now by Proposition \ref{prop:asympt:equiv2} with $(F,G_1,G_2)=(\mu_{c_1},\mu_{c_1},\mu_2)$ and $(c_1=1,c_2=0)$, we obtain $\mu\in \cals_\Delta$.
Finally $\mu\in \cald_\Delta$ follows from $\mu (x+\Delta)\sim \mu_{c_1}(x+\Delta)$ (cf. Lemma \ref{lem:ald:symp:equiv}).  
\end{proof}

\begin{proof}[Proof of Theorem \ref{thm:ID:subexponential:sself}]
{\bf $[\,(\mathrm{ii})$ implies $(\mathrm{i})$ and $(\mathrm{iii})\,]$}\  
Fix $c_1>1$ and we use $\mu_{c_1}$, $\nu_{c_1}$ and $\nu_{(c_1)}$ of Theorem \ref{thm:ID:delta:subexponential:cp}.
Since $\nu_{(1)}(x+\Delta)\nu((1,\infty))\sim \nu_{(c_1)}(x+\Delta)\nu((c_1,\infty))$, 
$\nu_{(c_1)}\in \cals_\Delta$ follows form \cite[Theorem 4.23]{Foss:Korshunov:Zachary:2013}. 
By definition of $s$-self-decomposability we have $\nu_{c_1}\in \cala_\Delta$. 
Recall that $\mu_{c_1}$ is a compound Poisson with the L\'evy measure $\nu_{c_1}$ and thus by Theorem
 \ref{thm:cp:twosided} $(\mathrm{iii})$ $\mu_{c_1}(x+\Delta)\sim \nu_{c_1}(x+\Delta)$, so that 
$\mu_{c_1} \in \cala_\Delta$. Thus Theorem \ref{thm:ID:delta:subexponential:cp} implies $(\mathrm{i})$ and $(\mathrm{iii})$. \\

\noindent
{\bf $[\,(\mathrm{iii})$ implies $(\mathrm{ii})$ and $(\mathrm{i})\,]$}\  
Obviously $\nu_{(1)} \in \cala_\Delta$ and the results follows from Theorem \ref{thm:ID:delta:subexponential}.\\

\noindent
{\bf $[\,(\mathrm{i})$ implies $(\mathrm{ii})\,]$}\ We prepare three IDs.
Recall that we may write $\nu(dx)=g(x)dx$ since $s$-self-decomposability implies the existence of a L\'evy density. 
Let $\mu_{\ov c_{1}}\in \id$ with 
$a=b=0$ and the L\'evy measure 
\[
 \nu_{\ov c_1}(dx)= ({\bf 1}_{\{x \in [-c_1,c_1]\}}g(c_1)+{\bf 1}_{\{|x|>c_1\}}g(|x|)) dx
\] 
and let $\mu_0 \in \id$ with the same $a$ and $b$ as those of $\mu$ and the L\'evy measure 
\[
 \nu_0(dx)= \big ({\bf 1}_{\{x \in [-c_1,c_1],\,g(x)-g(c_1)\ge 0\}(g(x)-g(c_1))}+{\bf 1}_{\{x<-c_1\}}g(x) \big) dx
\]
and let $\mu_- \in \id$ with $a=b=0$ and the L\'evy measure 
\[
\nu_-(dx)=\big ( {\bf 1}_{\{x<-c_1\}}g(-x)
+ {\bf 1}_{\{x \in [-c_1,0],\,g(c_1)-g(x)\ge 0\}}(g(c_1)-g(x))
\big)  dx. 
\]
Here observe that 
$\mu_-(x+\Delta)=O(e^{-\gamma x})$ and $\mu_0(x+\Delta)=O(e^{-\gamma x})$ for any $\gamma>0$. 
Then $\mu \ast \mu_- \in \cals_\Delta \cap \cald_\Delta$ follows from Proposition \ref{prop:asympt:equiv2} with 
$(F,G_1,G_2)=(\mu,\mu,\mu_-)$ and $(c_1=1,c_2=0)$ (cf. Lemma \ref{lem:ald:symp:equiv}). Notice that 
$\nu_{\ov c_1}$ is symmetric and unimodal, so is the compound Poisson $\mu_{\ov c_1}$ and $\mu_{\ov c_1}\in \cala_\Delta$.
Since $\mu_{\ov c_1}\ast \mu_0= \mu\ast \mu_- \in \cals_\Delta \cap \cald_\Delta$ 
by Proposition \ref{prop:factrization:delta}, $\mu_{\ov c_1} \in \cals_\Delta \cap \cala_\Delta$. 
Recall that $\mu_{\ov c_1}$ is a compound Poisson with L\'evy measure 
$\nu_{\ov c_1}$. Write $\nu_{(\ov c_1)}(dx)=\nu_{\ov c_1}(dx)/(2(c_1 g(c_1)+G(c_1,\infty)))$ and then due to 
Corollary \ref{col:cp:twosided:ani}, $\nu_{(\ov c_1)} \in \cals_\Delta \cap \cala_\Delta$ follows. 
Now since $\nu_{\ov c_1}(x+\Delta) \sim \nu_{(1)}(x+\Delta) \nu((1,\infty))$, we have 
$\nu_{(1)} \in \cals_\Delta$.
\end{proof}

\begin{proof}[Proof of Theorem \ref{thm:ID:loc:subexponential}]
We decompose $\mu$ as in the proof of Theorem  \ref{thm:ID:delta:subexponential}.  Suppose that $(\mathrm{i})$ holds.
We obtain from Proposition \ref{prop:decomp:delta:Poisson} that $\mu_1 \in \cals_{loc} \cap \cald_{loc}$ and 
$\mu(x+\Delta)\sim\mu_1(x+\Delta)$ for any $\Delta$. 
Introduce the condition 
\[
 (\mathrm{iii})'\qquad \nu_{(1)} \in \call_{loc} \cap \cald_{loc}\quad \text{and}\quad \mu_1(x+\Delta)\sim \nu(x+\Delta) 
\quad \text{for any}\ \Delta.  
\]
We see from Corollary 2.1 of \cite{Watanabe:Yamamuro:2010} that the condition $\mu_1 \in \cals_{loc} \cap \cald_{loc}$ is 
equivalent to $(\mathrm{ii})$ and $(\mathrm{iii})'$. Since 
$\mu(x+\Delta)\sim\mu_1(x+\Delta)$ for any $\Delta$, $(\mathrm{iii})'$ is equivalent to $(\mathrm{iii})$. 
Conversely, suppose the condition $\mu_1 \in \cals_{loc} \cap \cald_{loc}$. Then $(\mathrm{i})$ follows from
 Proposition \ref{prop:asympt:equiv2}.
\end{proof}

\begin{proof}[Proof of Theorem \ref{theoremcompound}]  
We give the proof only for the general two-sided case, since that for 
the positive-half case is similar. As in the proof of Theorem  \ref{thm:cp:twosided}  
we can prove that $(\mathrm{iii})$ implies $(\mathrm{ii})$ and that $(\mathrm{ii})$ implies $(\mathrm{iii})$ and $(\mathrm{i})$. 
Thus it is enough to prove that 
$(\mathrm{i})$ implies $(\mathrm{ii})$. Define $\delta:=-\log(1-\lambda)$ and the distribution $\nu_0$ as
$$\nu_0:= \delta^{-1} \sum_{n=1}^{\infty}(\lambda^n/ n)\,G^{*n}.$$
Then, as shown in the proof of \cite[Theorem 1.8]{Watanabe:2008} (cf. the proof of \cite[Corollary 3]{Embrechts:Goldie:Veraverbeke:1979})
that 
$$F_a =e^{-\delta a}\sum_{n=0}^{\infty}( (\delta a)^n/ n!)\,\nu_0^{*n}\quad 
\text{and}\quad 
 G=-\lambda^{-1}\sum_{n=1}^{\infty}\big( (-\delta)^n /n!\big)\,\nu_0^{*n}.$$
Suppose that $F_a \in  \cals_\Delta \cap \cald_\Delta$ for some $a>0$, or equivalently for all $a>0$. 
Then we obtain from Theorem  \ref{thm:cp:twosided} that $\nu_0 \in  \cals_\Delta \cap \cald_\Delta$. 
Now due to Propositions \ref{prop:asympt:equiv3} and \ref{prop:kesten},  
the dominated convergence theorem yields 
$$\lim_{x \to \infty}\frac{G(x +\Delta)}{\nu_0(x +\Delta)} =\delta e^{-\delta}/\lambda >0$$
and hence by Proposition \ref{prop:asympt:equiv} that $G\in  \cals_\Delta \cap \cald_\Delta$. 
\end{proof}

\begin{proof}[Proof of Theorem \ref{theoremsupremum}]   Let $F_a=\pi,$ $G=G_0$, and $a=1$ in Theorem \ref{theoremcompound}. 
Then the proof is clear from Theorems \ref{thm:cp:twosided} and \ref{theoremcompound}.
\end{proof}

\end{document}